\newlength{\wideitemsep}		\setlength{\wideitemsep}{.3\itemsep}
\let\olditem\item
\renewcommand{\item}{\setlength{\itemsep}{\wideitemsep}\olditem}
\newcommand{\cM}{{\mathcal M}}
\newcommand{\cC}{{\mathcal C}}
\newcommand{\cR}{{\mathcal R}}
\newcommand{\cO}{{\mathcal O}}
\newcommand{\cH}{{\mathcal H}}
\newcommand{\cI}{{\mathcal I}}
\newcommand{\cL}{{\mathcal L}}
\newcommand{\cD}{{\mathcal D}}
\newcommand{\cJ}{{\mathcal J}}
\def\fH{\mathfrak{H}}
\def\rap{\mathrm{a_p}}
\def\rbq{\mathrm{b_q}}
\def\rd{\mathrm{d}}
\def\ri{\mathrm{i}}
\def\re{\mathrm{e}}
\def\rloc{\mathrm{loc}}
\newcommand{\N}{{\mathbb N}}
\newcommand{\Z}{{\mathbb Z}}
\newcommand{\Q}{{\mathbb Q}}
\newcommand{\R}{{\mathbb R}}
\newcommand{\C}{{\mathbb C}}
\newcommand{\I}{{\mathbb I}}
\theoremstyle{plain}
\newtheorem{thm}{Theorem}[section]
\newtheorem{lem}[thm]{Lemma}
\newtheorem{cor}[thm]{Corollary}
\newtheorem{prop}[thm]{Proposition}
\theoremstyle{definition}
\newtheorem{defi}[thm]{Definition}
\newtheorem{rem}{Remark}
\newtheorem{rems}{Remarks}
\newcommand{\bi}{\begin{itemize}}
\newcommand{\ei}{\end{itemize}}
\newcommand{\bd}{\begin{description}}
\newcommand{\ed}{\end{description}}
\newcommand{\be}{\begin{enumerate}}
\newcommand{\ee}{\end{enumerate}}
\newcommand{\ps}{\prescript}
\newcommand{\dis}{\displaystyle}
\def\bc{\begin{center}}
\def\ec{\end{center}}
\def\Sum{\dis \sum}
\def\bf{\textbf}
\newcommand{\tfor}{\text{for}}
\newcommand{\tif}{\text{if}}
\newcommand{\tand}{\text{and}}
\newcommand{\tor}{\text{or}}
\newcommand{\lsum}{\sum\limits}
\newcommand{\lint}{\int\limits}
\newcommand{\lprod}{\prod\limits}
\newcommand{\llim}{\lim\limits}
\newcommand{\sbs}{\substack}
\def\no{\noindent}
\def\l{\left}
\def\r{\right}
\def\bl{\bigl}
\def\br{\bigr}
\def\bgl{\biggl}
\def\bgr{\biggr}
\def\s{\smallskip}
\numberwithin{equation}{section}
\begin{document}

\title{Fractional calculus and generalized\\
Mittag-Leffler type functions}
\author{Christian Lavault\thanks{LIPN, CNRS UMR 7030. \emph{E-mail:}\
\href{mailto:lavault@lipn.univ-paris13.fr}{lavault@lipn.univ-paris13.fr}}
}

\date{\empty}
\maketitle

\begin{abstract}
In this paper, the generalized fractional integral operators of two generalized Mittag-Leffler type functions are investigated. The special cases of interest involve the generalized Fox--Wright function and the generalized $M$-series and $K$-function. 

In the next Section~\ref{genfracint} we first recall some generalized fractional integral operators among the most widely used in fractional calculus. Section~\ref{M-K} is devoted to the definitions of 
$M$-series and $K$-function and their relations to special functions. In Sections~\ref{MKfc} and~\ref{fcMKF3}, effective fractional calculus of the generalized $M$-series and the $K$-function is carried out. The last section briefly concludes and opens up new perspectives.

The results established herein generalize recent properties of generalized Mittag-Leffler type functions using left- and right-sided generalized fractional differintegral operators. The note results also in important applications in physics and mathematical engineering.

\s \no {\bf Keywords:} Fox--Wright psi function; Generalized hypergeometric function;  $M$-series and $K$-function; Mittag-Leffler type functions; Riemann--Liouville's, Saigo's and Saigo--Maeda's generalized fractional calculus operators.

\s \no 2010 Mathematics Subject Classification: 26A33, 33C05, 33C10, 33C20, 33C60, 44A15.
\end{abstract}

\vskip 1cm
\begin{small}
\setcounter{tocdepth}{2}
\hypersetup{hidelinks}
\tableofcontents
\end{small}

\vskip 1.5cm
\pagenumbering{arabic}

\section{Introduction and motivations} \label{intro}
During the last two decades, the interest in Mittag-Leffler type functions has considerably developed. This is due to their vast potential of applications in applied sciences and engineering, and their steadily increasing importance in physics researches. More precisely, deviations of physical phenomena having an exponential behavior may be governed by physical laws (exponential and power laws) with the help of {\em generalized Mittag-Leffler type functions}. For example, they appear especially important in research domains such as stochastic systems theory, dynamical systems theory, statistical distribution theory, disordered and chaotic systems, etc., with special emphasis placed on applications to fractional differential equations---although this topic is not addressed herein. Furthermore, geometric properties including starlikeness, convexity and close-to-convexity for the Mittag-Leffler type functions were also recently investigated, e.g. by Bansal and Prajapat in~\cite{BanPra16}, Kilbas {\em et al.}~\cite{KiSrTr06} and Kiryakova~\cite{Kirya06,Kirya10a,Kirya15}.
This makes these functions directly and naturally amenable to fractional calculus techniques as studied by Gorenflo {\em et al.}~\cite{GoKiMaRo14}, Kilbas {\em et al.}~\cite{Kilbas05,KiSaSa04,KiSrTr06}, Kiryakova~\cite{Kirya94}, Kumar--Saxena~\cite{KumSax15}, Saigo~\cite{Saigo78,Saigo85,Saigo96}, Samko {\em et al.}~\cite{SamKilMar93}, Saxena--Saigo~\cite{SaxSai05}, Sharma~\cite{Sharma08,Sharma12}, Srivastava  {\em et al.} \cite{Srivast16,SrivastAgarwal13}, etc.

\subsection{The Mittag-Leffler and generalized Mittag-Leffler type functions}
The one-parametric {\em Mittag-Leffler function} (M-L for short) $E_\alpha(z)$ was first introduced by the swedish mathematician G. M. Mittag-Leffler in five notes~\cite[\color{cyan}1903]{Mittag03} \cite[\color{cyan}1905]{Mittag05} and also studied by Wiman~\cite[\color{cyan}1905]{Wiman05}. It is a special function of $z\in \C$ which depends on the complex parameter $\alpha$ and is defined by the power series 
\begin{equation} \label{def_mitt1}
E_\alpha(z) := \lsum_{n\ge 0} \frac{z^n}{\Gamma(\alpha n+1)}\ \qquad (\alpha\in \C).
\end{equation}
One can see that the series~\eqref{def_mitt1} converges in the whole complex plane for all $\Re(\alpha) > 0$. For all $\Re(\alpha) < 0$, it diverges everywhere on $\C\setminus \{0\}$ and, for $\Re(\alpha) = 0$, its radius of convergence is $R = \re^{\pi/2 |\Im(z)|}$.

A first generalization of $E_\alpha(z)$ introduced by Wiman~\cite[\color{cyan}1905]{Wiman05}, and later studied by Agarwal {\em et al.}~\cite[\color{cyan}1953]{Agar53,HumAgr53}, is the two-parametric M-L function of $z\in \C$, defined by the series
\begin{equation} \label{def_mitt2}
E_{\alpha,\beta}(z) := \lsum_{n\ge 0} \frac{z^n}{\Gamma(\alpha n+\beta)}\ \qquad %
(\alpha, \beta\in \C;\ \Re(\alpha) > 0\ \Re(\beta) > 0).
\end{equation}
In the case when $\alpha$ and $\beta$ are real positive, the series converges for all values of $z\in \C$, while when $\alpha,\, \beta\in \C$, the conditions of convergence closely follow the ones for 
$E_{\alpha,1}(z) := E_\alpha(z)$.

$E_\alpha(z)$ and $E_{\alpha,\beta}(z)$ are entire functions of $z\in \C$ of order $\rho = 1/\Re(\alpha)$ and type $\sigma = 1$; in a sense, they are the simplest two entire functions of this order (see, e.g., \cite[\color{cyan}\S3.1]{GoKiMaRo14}).\footnote{%
Following Levin~\cite[\color{cyan}Lect.~1]{Levin96}, every entire function $f(z)$ is represented by a power series $f(z) = \lsum_{n\ge 0} c_n z^n$ which converges everywhere in the complex plane; the radius of convergence is infinite, which implies that $\llim_{n\to \infty} \sqrt[n]{|c_n|} = 0$. Any  power series satisfying this criterion will represent an entire function. The global behaviour of an entire function of finite order is characterized by its {\em order} $\rho$ and its {\em type} $\sigma$ represented by the formulas
\[
\rho = \limsup_{n\to \infty} \frac{n\log n}{\log\l(1/|c_n|\r)}\ \qquad \tand\ \qquad 
\sigma = \frac{1}{\rho\re} \limsup_{n\to \infty} \l(n \sqrt[n]{|c_n|^\rho}\r).\]
Moreover, the asymptotic behaviour ($|z|\to \infty$) of an entire function is usually studied via its restriction to rays in an angle $\theta_1\le |\arg z|\le \theta_2$ (cf. Phargmén--Lindel{\"o}f). The so-called {\em indicator function} of an entire function of order $\rho$ is introduced as
\[
h(\theta) = \limsup_{r\to \infty} \frac{\log \bl|f(r\re^{\ri \theta})\br|}{r^\rho}\ %
\qquad (\theta_1\le \theta\le \theta_2).\]
Thus, the entire function $f(z) = \sum \l(\frac{\sigma \re^\rho}{n}\r)^{n/\rho} z^n$ has order $\rho$ and type $\sigma$. For instance, the M-L function $\lsum_{n=0}^\infty \frac{\l(A^\alpha z\r)^n}{\Gamma(\alpha n+1)}$, where $A > 0$ and $\alpha > 0$, is an entire function of order $\rho = 1/\alpha$ and type $\sigma = A$ (by Stirling formula).
}
The one- and two-parametric M-L function are fractional extensions of the basic functions $E_1(\pm z) := E_{1,1}(\pm z) = \re^{\pm z}$, $E_{1,2}(z) := (\re^z - 1)/z$, $E_2(z) := E_{2,1}(z) = \cosh(\sqrt{z})$, $E_{2,2}(z) = \sinh(\sqrt{z})/\sqrt{z}$, etc. (see, e.g., \cite[\color{cyan}\S7.1]{AnnMan12}, \cite{ErMaObTr53}, \cite[\color{cyan}Sec.1--4]{GoKiMaRo14}, \cite{GoKiRo98}, \cite{Kilbas05,KiSaSa04,Kirya94,Kirya06,Wiman05}, and references therein). They also appear as solutions of fractional integro-differential equations, e.g. in~\cite{Dzrba66,KiSrTr06,Kirya94,Kirya10a,Kirya15}.

Among the numerous generalizations of the M-L function let us point out the {\em standard Wright function}, defined by Wright in a bunch of papers (from 1933 to 1940)~\cite[\color{cyan}1934]{Wright34} by the series
\begin{equation} \label{wrightphi}
\phi(\alpha,\beta; z) := \lsum_{n\ge 0} \frac{1}{\Gamma(\alpha n + \beta)}\, \frac{z^n}{n!} %
= \ps{}{0}\psi_1^{}\l( \sbs{--\\[.1cm] (\beta, \alpha)}\, ; z\r)\ \qquad (z,\, \alpha,\, \beta\in \C),
\end{equation}
where $\ps{}{0}\psi_1^{}(-;(\beta, \alpha) ; z)$ is a special case of the {\em generalized Fox--Wright psi function}, which is given in Definition~\ref{def_foxwright}, Eq.~\eqref{eq_foxwright} of~\S\ref{htf}. The standard Wright function, which is very close to the two-parametric M-L function, may be rewritten also in terms of the {\em Fox $H$-function} introduced by Fox~\cite{Fox28} (see Definition~\ref{def_foxfunction} in~\S\ref{htf} and, e.g., \cite{AskDaal10,Kilbas05}).
If $\alpha > -1$, this series is absolutely convergent for all $z\in \C$, while for $\alpha = -1$, it is absolutely convergent for $|z| < 1$, and for $|z| = 1$ and $\Re(\beta) > -1$~\cite[\color{cyan}Sec.~1.11]{KiSrTr06}.

For $\alpha > -1$, $\phi(\alpha,\beta;z)$ is an entire function of $z$, wherefrom one can deduce that for $\alpha > -1$, the function has order $\rho = 1/(\alpha+1)$ and type $\sigma = (\alpha+1) \alpha^{\frac{1}{\alpha+1}} = \alpha^\rho/\rho$.

Wright investigated the function $\phi(\delta,\nu+1;-z) := J_\nu^\delta(z)$, known as the {\em Bessel-Wright function} (or the Wright generalized Bessel function) and derived also the asymptotic behaviour of $\phi(\alpha,\beta;z)$ at infinity by means of its integral representation in terms of a {\em Mellin--Barnes contour integral} (see, e.g., \cite[\color{cyan}App.~E.ii]{Kirya94},\cite[\color{cyan}1.11]{KiSrTr06}). Additionally, for $z\in \C\setminus (-\infty,0]$ and\ $\nu\in \C$, the functions 
$\phi(1,\nu+1;\pm z^2/4)$ can  be expressed also in terms of the {\em Bessel function of the first kind}, 
\begin{flalign*}
J_\nu(z) &:= (z/2)^{\nu} \lsum_{k=0}^\infty (-1)^{k} \frac{(z^{2}/4)^{k}}{k! \Gamma(\nu+k+1)} %
= \frac{z/2}{\Gamma(\nu+1)}\, \ps{}{0}F_1^{}\bl(\nu + 1; z^2/4\br)\\
\shortintertext{%
and of the {\em modified Bessel function}
}
I_\nu(z) &:= (z/2)^{\nu} \lsum_{k=0}^\infty \frac{(z^{2}/4)^{k}}{k! \Gamma(\nu+k+1)} %
= \frac{z/2}{\Gamma(\nu+1)}\, \ps{}{0}F_1^{}\bl(\nu + 1; -z^2/4\br),\\
\shortintertext{%
as follows,
}
\phi(1,\nu+1;-z^2/4) &= 2z^{-\nu} J_\nu(z)\ \qquad \tand\ \qquad \phi(1,\nu+1;z^2/4)= 2z^{-\nu} I_\nu(z).
\end{flalign*}
Both Bessel functions are analytic functions of $z\in \C$, except for a branch point at $z = 0$ when $\nu$ is not an integer. The principal branches of $J_\nu(z)$ and $I_\nu(z)$ correspond to the principal value of $(z/2)^\nu$ and is analytic in the $z$-plane cut along the interval $(-\infty,0]$. When $\nu\in \Z$, they are entire in $z$. For fixed $z\ne 0$ each branch of the functions $J_\nu(z)$ and $I_\nu(z)$ is entire in $\nu$.\footnote{%
The one- and two-parametric M-L functions are generalized to the multiple M-L function $F_{\alpha,\beta}^{(\mu)}(z) = \lsum_{n=0}^\infty \frac{z^n}{\Gamma(\alpha n + \beta)^\mu}$ for real values 
$\alpha$, $\beta$ and $\mu > 0$ (see Appendix~\ref{asFml}). They are also tightly related to the M-L type function $E_{\alpha,1}^{(a)}(s;z)$ due to Barnes: $E_\alpha(z) = \llim_{s\to 0} E_{\alpha,1}^{(a)}(s;z)$\ and $E_{\alpha,\beta}(z) = \llim_{s\to 0} E_{\alpha,\beta}^{(a)}(s;z)$ (with complex parameters). Moreover,
\[
\llim_{\alpha\to 0} E_{\alpha,1}^{(a)}(s;z) = \frac{1}{\Gamma(\beta)} \Phi(z,s,a),\]
where the series $\Phi(z,s,a) := \lsum_{n\ge 0} \frac{z^n}{(n + a)^s}$ defines the {\em Lerch's transcendent zeta} function, which is analytic when $|z| < 1$\ for $a\in \C\setminus \Z_{\le 0}$ and 
$s\in \C$; the series also converges when $|z| = 1$, provided that $\Re(s) > 1$.\par
As special cases, $\Phi(z,s,a)$ contains the {\em Riemann zeta function} $\zeta(s) = \Phi(1,s,1)$, the {\em Hurwitz (or generalized) zeta} function $\zeta(s,a) = \Phi(1,s,a)$ and the {\em Lerch zeta} function $\ell(\xi,s,a) = \re^{2\pi \ri \xi} \Phi\l(\re^{2\pi \ri \xi},s,a\r)$ ($\xi\in \R,\; \Re(s) > 1$), but also the {\em Polylogarithmic function} Li$_s(z) = z \Phi(z,s,1)$ (for $s\in \C$, it is analytic when $|z| < 1$\ and the series converges when $|z| = 1$\ provided that $\Re(s) > 1$) and the {\em Lipschitz--Lerch} zeta function $L(\xi,s,a) := \Phi\l(\re^{2\pi \ri \xi},s,a\r)$ (for $a\in \C\setminus \Z_{\le 0}$, it is analytic when $\Re(s) > 0$\ for $\xi\in \R\setminus \Z$\ and when $\Re(s) > 1$\ for $\xi\in \Z$). (For other values of $z$, these functions are defined by analytic continuation.)
}

Prabhakar~\cite[\color{cyan}1971]{Prabha71} introduced a three-parametric generalization of 
$E_{\alpha,\beta}(z)$ defined in~\eqref{def_mitt2} as a kernel of certain fractional differential equations in terms of the series
\begin{equation} \label{def_Prab}
E_{\alpha,\beta}^{\gamma}(z) := \lsum_{n\ge 0} \frac{(\gamma)_n}{\Gamma(\alpha n+\beta)} %
\frac{z^n}{n!}\ \qquad (\alpha,\, \beta,\, ;\ \Re(\alpha) > 0,\, \Re(\beta) > 0),
\end{equation}
where $(\lambda)_n$ denotes the usual Pochhammmer symbol defined by the identity
\[
(\lambda)_n := \lambda(\lambda + 1)\cdots(\lambda + n - 1)\ \ \tif\ \ n = 1, 2, 3,\ldots,\ \quad (\lambda)_0 = 1\ \qquad (\lambda\neq 0).\]
In~\eqref{def_Prab}, no condition is imposed on $\lambda$, provided it is not zero; for example, $\lambda$ can be a negative integer, in which case the series is terminating into a polynomial. By contrast, whenever $(\lambda)_n$ is to be written in terms of a gamma function as
\[
(\lambda)_n = \frac{\Gamma(\lambda + n)}{\Gamma(\lambda)}\ \qquad (\Re(\lambda) > 0),\]
then the condition $\Re(\lambda) > 0$ must be fulfilled, or at least the constraint $\lambda\neq 0$, $-1$, $-2$,\ldots is required. Note that if $E_{\alpha,\beta}^{\gamma}(z)$ is represented as a Fox $H$-function, $\alpha$ is real and positive: in that case, such a condition becomes a requirement. 

Prabhakar's three-parametric M-L function is an entire function of $z\in \C$ of order $\rho = 1/\Re(\alpha)$ and type $\sigma = 1$. If $\gamma = 1$, then $E_{\alpha,\beta}^{1}(z) = E_{\alpha,\beta}(z)$ and, if $\gamma = \beta = 1$, then $E_{\alpha,1}^{1}(z) := E_\alpha(z)$.

Due to its integral representation (see, e.g., \cite[\color{cyan}Chap.~5, \S5.1.2]{GoKiMaRo14}), 
$E_{\alpha,\beta}^{\gamma}(z)$ is considered as a special case of Fox's $H$-function as well as of {\em Wright's generalized hypergeometric} $\ps{}{p}\psi_q^{}$, so-called {\em Fox--Wright psi function} of $z\in \C$. (see, e.g., \cite{AskDaal10,Kilbas05} and the definitions of these higher transcendental functions, including the {\em Meijer $G$-function}, in~\S\ref{htf}, Definitions~\ref{def_foxwright}, \ref{def_foxfunction} and~\ref{def_meijerG}).

It is straightforward to verify that
\begin{flalign}
E_{\alpha,\beta}(z) &= \ps{}{1}\psi_1^{}\l( \sbs{(1,1)\\[.1cm] (\beta, \alpha)}\, ; z\r) %
= H_{1,2}^{1,1}\l[ -z\, \bgl|\, \sbs{ (0,1)\\[.1cm] (0,1), (1-\beta,\alpha)}\r]\ %
\qquad \tand \label{eq_mittH1}\\
E_{\alpha,\beta}^{\gamma}(z) &= \frac{1}{\Gamma(\gamma)}\, \ps{}{1}\psi_1^{}\l( %
\sbs{(\gamma,1)\\[.1cm] (\beta,\alpha)}\, ; z\r) = \frac{1}{\Gamma(\gamma)}\, %
H_{1,2}^{1,1}\l[ -z\, \bgl|\, \sbs{ (1-\gamma,1)\\[.1cm] (0,1), (1-\beta,\alpha)}\r] \label{eq_mittH2}\\
&= \frac{1}{\Gamma(\gamma)}\, \ps{}{1}F_m^{}\l(\gamma; \frac{\beta}{m}, \frac{\beta+1}{m}, %
\ldots, \frac{\beta+m-1}{m}\, ; \frac{z}{m^m}\r)\ \qquad \tfor \ \ \alpha = m\in \N. \label{eq_mittH3}
\end{flalign}
In particular, when $\alpha = 1$ the $H$-function coincides with the Meijer $G$-function (see \S\ref{htf}, Eq.~\eqref{eq_FoxMeijer})
\begin{flalign} \label{eq_alpha=1}
E_{1,\beta}^{\gamma}(z) &= %
\frac{1}{\Gamma(\gamma)}\, \ps{}{1}\psi_1^{}\l( \sbs{ (\gamma, 1)\\[.1cm] (\beta, 1)}\, ; z\r) %
= \frac{1}{\Gamma(\gamma)}\, H_{1,2}^{1,1}\l[ -z\, \bgl|\, \sbs{ (1 - \gamma, 1)\\[.1cm] %
(0, 1), (1 - \beta, 1)}\r] = \frac{1}{\Gamma(\gamma)}\, G_{1,2}^{1,1}\l( \sbs{1 - \gamma\\[.1cm] %
0, 1 - \beta}\, \bgl|\, z\r)\nonumber\\
&= \frac{1}{\Gamma(\beta)}\, \ps{}{1}F_1^{}(\gamma;\beta; z) = \frac{1}{\Gamma(\beta)}\, %
M(\gamma;\beta; z),
\end{flalign}
where $M(\gamma;\beta;z)$ denotes Kummer's confluent hypergeometric function (sometimes denoted by 
$\Phi(\gamma;\beta;z)$). Similarly, if we set $\beta = 1$ in Eq.~\eqref{eq_mittH1}, we find that 
\begin{equation} \label{eq_beta=1}
E_{\alpha,1}(z) = \frac{1}{\Gamma(\gamma)}\, \ps{}{1}\psi_1^{}\l( \sbs{ (1,1)\\[.1cm] (1,\alpha)}\, %
; z\r) %
= H_{1,2}^{1,1}\l[ -z\, \bgl|\, \sbs{ (0, 1)\\[.1cm] (0, 1), (0, \alpha)}\r]\ \qquad (\alpha\in \C,\ %
\Re(\alpha) > 0)
\end{equation}

Further extensions of the M-L function to four and six parameters were defined by Salim~\cite{Salim09} and, associated with Weyl fractional integral and differential operators, by Salim and Faraj~\cite{SalFar12}, respectively, by the power series
\begin{equation}  \label{def_mitt3}
E_{\alpha,\beta}^{\gamma,\delta}(z) := \lsum_{n\ge 0} \frac{(\gamma)_n\, z^n} %
{\Gamma(\alpha n + \beta)\, (\delta)_n}\ \qquad \tand \qquad %
E_{\alpha,\beta,r}^{\gamma,\delta,s}(z) := \lsum_{n\ge 0} \frac{(\gamma)_{sn}\, z^n} %
{\Gamma(\alpha n +\beta)\, (\delta)_{rn}}\,.
\end{equation}
In the first case (with four parameters), $\alpha, \beta, \gamma, \delta\in \C$, $\min\bl(\Re(\alpha), \Re(\beta),\Re(\gamma),\Re(\delta) > 0\br)$. While in the second case (with six parameters), 
$\alpha, \beta, \gamma, \delta\in \C$, $\min\bl(\Re(\alpha),\Re(\beta),\Re(\gamma),\Re(\delta) > 0\br)$, with $r,\, s\in \R_+$ and $s\le \Re(\alpha) + r$. In the latter case, $(\gamma)_{ns}$ denotes an 
extended variant of the Pochhammer symbol, defined by $(\gamma)_{sn} := \Gamma(\gamma+sn)/\Gamma(\gamma)$, which reduces to $s^{sn}\lprod_{j=1}^s \l(\frac{\gamma+j-1}{s}\r)_n$\ when $s$ is a non-negative integer.\footnote{%
The Pochhammer symbol $(x)_{nk}$ must not be confused with the {\em generalized Pochhammer $k$-symbol} 
$(x)_{n,k}$ itself, which was introduced by Diaz and Pariguan in~\cite{DiaPar07} and is defined in terms of $\dis \Gamma_k(x) := \llim_{n\to \infty} \frac{n! (nk)^{x/k-1}}{(x)_{n,k}}$\ ($k > 0$) for $x\in \C\setminus k\Z_{<0}$ and $k\in \R$ by the relation
\[
(x)_{n,k} := x(x + k)(x + 2k)\cdots (x + (n-1)k) = \Gamma_k(x + nk)/\Gamma_k(x).\]
When $k = 1$, this reduces to the standard Pochhammer symbol and gamma function. 
}
Both functions are entire in the complex $z$-plane of order $\rho = 1/\Re(\alpha-\delta+1)$ and type 
$\sigma = \frac{1}{\rho} \l(\Re(\delta)^{\Re(\delta)}/\Re(\alpha)^{\Re(\alpha)}\r)^\rho$. 

\begin{rems} \label{rems1}
Among the M-L functions with three parameters are the {\em generalized (Kilbas--Saigo) M-L type} special functions. These functions were introduced in 1995 by Kilbas and Saigo~\cite{KilSai95} in the form
\begin{equation} \label{kilsaiml}
E_{(\alpha,m,\ell)}(z) = 1 + \lsum_{k=1}^\infty \lprod_{j=0}^{n-1} \frac{\Gamma(\alpha\lfloor jm+\ell\rfloor+1)}{\Gamma(\alpha\lfloor jm+\ell+1\rfloor+1)}\, z^k\ \qquad (z\in \C),
\end{equation}
where an empty product is assumed to be equal to 1 (`empty product convention'). The generalized M-L type function~\eqref{kilsaiml} is defined for real $\alpha,\, m\in \R$ and $\ell\in \C$ meeting the conditions $\alpha > 0$, $m > 0$\ and $\alpha(jm + \ell) + 1\neq -1,\, -2\ldots$ ($j = 1,\, 2,\ldots$). 

When $\alpha$, $m$\ and $\ell$\ are real numbers which fulfill the above conditions, then $E_{(\alpha,m,\ell)}(z)$ is an entire function of $z$ of order $\rho = 1/\alpha$ and type $\sigma = 1/m$. In particular, if 
$m = 1$, the function reduces to $E_{(\alpha,1,\ell)}(z) := \Gamma(\alpha \ell + 1) %
E_{(\alpha,\alpha\ell+1)}(z)$ (see, e.g., \cite[\S5.2]{GoKiMaRo14} for a detailed study of these generalized (Kilbas--Saigo) M-L type special functions).

A wider extended class of special functions of M-L type was further introduced and studied, e.g., by Kiryakova~\cite{Kirya94,Kirya06,Kirya10b}. This class, based on Luchko--Kilbas--Kiryakova's approach, consists of multi-parameters analogues of $E_{\alpha,\beta}(z)$ called {\em multi-indices M-L functions}, such that the parameters $\alpha := 1/\rho$ and $\beta := \mu$ are replaced by two sets of multi-parameters. 

The M-L functions with $2n$ parameters are defined for $\alpha_j\in \R$ ($\alpha_1^2 +\cdots + \alpha_n^2\neq 0$)\ and $\beta_j\in \C$ ($j = 1,\ldots, n\in \N$) by the series
\begin{equation} \label{multiparml}
E_{(\alpha,\beta)_n}(z) := \lsum_{k\ge 0} \frac{z^k}{\lprod_{j=1}^n \Gamma(\alpha_j k + \beta_j)}\ %
\qquad (z\in \C).
\end{equation}
Under the additional condition that $\Sigma_n = \alpha_1 +\cdots +\alpha_n > 0$, the generalized $2n$-parametric function $E_{(\alpha,\beta)_n}(z)$ is an entire function of $z\in \C$ of order $\rho = 1/\Sigma_n$ and type $\sigma = \lprod_{i=1}^n \l(\Sigma_i/|\alpha_i|\r)^{\alpha_i/\Sigma_i}$. When $n = 1$, the definition in~\eqref{multiparml} coincides with the definition of the two-parametric M-L function
\begin{subequations}
\begin{flalign}
E_{(\alpha,\beta)_1}(z) &:= E_{\alpha,\beta}(z) = \lsum_{k\ge 0}\frac{z^k}{\Gamma(\alpha k + \beta)}\ \qquad (z\in \C),\\
\intertext{%
and similarly for $n = 2$, where $E_{(\alpha,\beta)_n}(z)$ coincides with the four-parametric M-L function 
}
E_{(\alpha,\beta)_2}(z) &= E_{(\alpha_1,\beta_1;\alpha_2,\beta_2)}(z) = %
\lsum_{k\ge 0} \frac{z^k}{\Gamma(\alpha_1 k + \beta_1) \Gamma(\alpha_2 k + \beta_2)}\ \qquad (z\in \C).
\end{flalign}
\end{subequations}
The {\em generalized $2n$-parametric M-L function} $E_{(\alpha,\beta)_n}(z)$ can be represented in terms of the generalized Fox--Wright hypergeometric function $\ps{}{p}\psi_q^{}(z)$ by
\begin{subequations}
\begin{flalign} 
E_{(\alpha,\beta)_n}(z) &= \ps{}{1}\psi_n^{}\l( \sbs{ (1,1)\\[.1cm] %
(\beta_1,\alpha_1),\ldots,(\beta_n,\alpha_n) }\; ; z\r)\ \qquad (z\in \C) \label{multparrep1}\\
\intertext{%
and, via a Mellin--Barnes integral (see~\S\ref{intrepml} below), by
}
E_{(\alpha,\beta)_n}(z) &= \frac{1}{2\pi \ri}\, \lint_{\cL} \frac{\Gamma(s) \Gamma(1-s)} %
{\lprod_{j=1}^n \Gamma(\beta_j-\alpha_j s)}\, (-z)^{-s} \rd s\ \qquad (z\neq 0). \label{multparrep2}
\end{flalign}
\end{subequations}
For $\Re(\Sigma_n) > 0$, one can choose the left loop $\cL_{-\infty}$ as a contour of integration
in~\eqref{multparrep2}. Carrying out this integral by use of the theory of residues immediately
yields the series representation given in~\eqref{multiparml}. The extension of $E_{(\alpha,\beta)_n}(z)$ with the right loop $\cL_{+\infty}$ chosen as the contour of integration is entailed through the Mellin--Barnes integral representation in~\eqref{multparrep2}. This yields the convergence of the contour integral for all values of parameters $\alpha_1,\ldots, \alpha_n\in \C$, $\beta_1,\ldots, \beta_n\in \C$ such that $\Re(\Sigma_n) < 0$.
\end{rems}

\subsection{Integral representations of Mittag-Leffler type functions} \label{intrepml}
The integral representations of M-L type functions play a prominent role in the analysis of entire functions. As for any function of the M-L type, the M-L functions with one to three parameters can be
represented via {\em Mellin--Barnes integrals} obtained, for example, by a calculus of residues.
\begin{lem} \label{lem_mbrep}
Let $\alpha\in \R_+$, $\beta,\, \gamma\in \C$ and $\Re(\gamma) > 0$. Then the Mellin--Barnes integral representation of Prabhakar's three-parametric function is
\begin{equation} \label{eq_mbrep}
E_{\alpha,\beta}^{\gamma}(z) = \frac{1}{\Gamma(\gamma)}\, \frac{1}{2\pi \ri}\, \lint_{\cC} %
\frac{\Gamma(s) \Gamma(\gamma-s)}{\Gamma(\beta-\alpha s)}\, (-z)^{-s} \rd s\ %
\qquad (z\in \C,\ |\arg z| < \pi)
\end{equation}
The contour of integration $\cC = (c-\ri \infty, c+\ri \infty)$, $0 < c < \Re(\gamma)$, separates all the poles of $\Gamma(s)$ at the points $s = -\nu$ ($\nu\in \N$) to the left from those of 
$\Gamma(\gamma-s)$ at the points $s = \nu + \gamma$ ($\nu\in \N$) to the right.\footnote{%
The present path of integration $\cC$ is simpler than the ones considered, for example, in Eq.~\eqref{multparrep2} and in the case of a Hankel contour, in Eq.~\eqref{complexmlintrep} below (see also Appendix~\color{cyan}C\color{black}\ and Appendix~\color{cyan}D\color{black}), but however runs exactly along the same lines. 
} 
\end{lem}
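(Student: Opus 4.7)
The plan is to start from the right-hand side of~\eqref{eq_mbrep} and evaluate the Mellin--Barnes integral by closing the contour $\cC$ to the left and invoking the residue theorem; the resulting series is then matched with the series definition~\eqref{def_Prab} of $E_{\alpha,\beta}^{\gamma}(z)$.

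First, I would record the geometry of the poles. Under $0 < c < \Re(\gamma)$, the vertical line $\cC = (c - \ri \infty, c + \ri \infty)$ separates the left sequence of simple poles $\{s = -\nu :\, \nu\in \N\}$ of $\Gamma(s)$ from the right sequence of simple poles $\{s = \gamma + \nu :\, \nu\in \N\}$ of $\Gamma(\gamma - s)$; the factor $1/\Gamma(\beta - \alpha s)$ is entire in $s$, and $(-z)^{-s}$ is single-valued under $|\arg z| < \pi$. I would then close $\cC$ by a sequence of left half-circular arcs $\cC_N$ of radius $R_N\to \infty$, chosen (as usual) to keep a uniform positive distance from every pole $s = -\nu$.

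The key analytic step---and the main obstacle I anticipate---is the arc-vanishing estimate $\int_{\cC_N}\to 0$. This rests on Stirling's asymptotic applied to the three gamma factors, combined with the reflection formula $\Gamma(s)\Gamma(1 - s) = \pi/\sin(\pi s)$. On the arcs, the modulus of the integrand has at most polynomial growth in $R_N$ together with exponential decay in $|\Im(s)|$, while $|(-z)^{-s}| = |z|^{-\Re(s)}\, \re^{(\arg z)\, \Im(s)}$; the hypothesis $|\arg z| < \pi$ supplies the strict margin $\pi - |\arg z| > 0$ needed to dominate the exponential growth of $1/|\sin(\pi s)|$. This is the standard Mellin--Barnes procedure, patterned after the analogous arc estimates for the Fox $H$-function referenced in the paper.

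Once the arc contribution is absorbed, the residue theorem yields
\[
\frac{1}{2\pi \ri}\, \lint_{\cC} \frac{\Gamma(s)\, \Gamma(\gamma - s)}{\Gamma(\beta - \alpha s)}\, (-z)^{-s}\, \rd s = \lsum_{\nu\ge 0} \mathrm{Res}_{s = -\nu} \l[\frac{\Gamma(s)\, \Gamma(\gamma - s)}{\Gamma(\beta - \alpha s)}\, (-z)^{-s}\r].
\]
Using $\mathrm{Res}_{s = -\nu}\Gamma(s) = (-1)^\nu/\nu!$, the identity $\Gamma(\gamma + \nu) = (\gamma)_\nu\, \Gamma(\gamma)$, and $(-1)^\nu (-z)^\nu = z^\nu$, each residue equals $\Gamma(\gamma)\, (\gamma)_\nu\, z^\nu/\l[\Gamma(\beta + \alpha\nu)\, \nu!\r]$. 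Summing and dividing by $\Gamma(\gamma)$ reproduces exactly the series~\eqref{def_Prab}, which establishes~\eqref{eq_mbrep}.
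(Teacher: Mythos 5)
Your proposal is correct and follows essentially the same route as the paper: close the contour to the left, pick up the residues of $\Gamma(s)$ at $s=-\nu$, and use $\mathrm{Res}_{s=-\nu}\Gamma(s)=(-1)^\nu/\nu!$ together with $\Gamma(\gamma+\nu)=(\gamma)_\nu\Gamma(\gamma)$ to recover the defining series of $E_{\alpha,\beta}^{\gamma}(z)$. The only difference is that you sketch the arc-vanishing estimate (driven by the super-exponential decay of $1/\Gamma(\beta-\alpha s)$ as $\Re(s)\to-\infty$), a step the paper silently subsumes under ``the sum of residues technique.''
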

\begin{proof}
The contour integral in~\eqref{eq_mbrep} is obtained by the sum of residues technique evaluated at the poles $s = 0, -1, -2,\ldots$. Hence,
\begin{flalign} \label{eq_mbintrep}
\frac{1}{2\pi \ri}\, \lint_{\cC} \frac{\Gamma(s) \Gamma(\gamma-s)}{\Gamma(\beta-\alpha s)}\, %
(-z)^{-s}\rd s &= \lsum_{k=0}^\infty \llim_{s\to -k} \bgl( \frac{(s+k) \Gamma(s) \Gamma(\gamma - s) %
(-z)^{-s}} {\Gamma(\beta-\alpha s)}\bgr)\nonumber\\
= \lsum_{k=0}^\infty \frac{(-1)^k}{k!}\, & \frac{\Gamma(\gamma + k)}{\Gamma(\beta + \alpha k)}\, 
(-z)^{k} = \Gamma(\gamma) \lsum_{k=0}^\infty \frac{(\gamma)_k} {\Gamma(\beta + \alpha k)}\, %
\frac{z^k}{k!} = \Gamma(\gamma) E_{\alpha,\beta}^{\gamma}(z). 
\end{flalign}
Eq.~\eqref{eq_mbintrep} is the expression of $E_{\alpha,\beta}^{\gamma}(z)$ as a Mellin--Barnes integral, and the lemma follows.
\end{proof}
Mellin--Barnes integral representations of the one- and two parametric M-L functions are immediately deduced from the one given in Lemma~\ref{lem_mbrep} by setting respectively $\gamma = \beta = 1$ and $\gamma = 1$. The following related representations of $E_{\alpha,1}^{1}(z) := E_\alpha(z)$ and $E_{\alpha,\beta}^{1}(z) := E_{\alpha,\beta}(z)$ write in terms of Mellin--Barnes integrals
\begin{subequations}
\begin{flalign*}
E_\alpha(z) &= \frac{1}{2\pi \ri}\, \lint_{\cC_1} \frac{\Gamma(s) \Gamma(1-s)} %
{\Gamma(1-\alpha s)}\, (-z)^{-s} \rd s\ \qquad (z\in \C,\ |\arg z| < \pi)\ \ \qquad \tand\\
E_{\alpha,\beta}(z) &= \frac{1}{2\pi \ri}\, \lint_{\cC_2} \frac{\Gamma(s) \Gamma(1-s)} %
{\Gamma(\beta-\alpha s)}\, (-z)^{-s} \rd s\ \qquad (z\in \C,\ |\arg z| < \pi),
\end{flalign*}
\end{subequations}
where each contour of integration $\cC_1$ and $\cC_2$ is again a straight line which starts at 
$c-\ri \infty$ and ends at $c+\ri \infty$ ($0 < c < 1$), leaving all poles of each integrand respectively separated to the left and to the right of each line contour.

\begin{rem}
M-L type functions play a basic role in the solution of fractional differential equations and integral equations of Abel type. Therefore, studying and developing their theory and stable methods is a first important step for their numerical computation. In this respect, integral representations of M-L type functions in terms of Mellin--Barnes integrals are sometimes not the easiest nor the more useful ones to handle. This is indeed the case of the two-parametric M-L function, for example, which enjoys various integral representations considered by D{\v{z}}rba{\v{s}}jan~\cite{Dzrba60,Dzrba66}, Erd{\'e}lyi {\em et al.}~\cite[\color{cyan}Vol~3, \S8.1]{ErMaObTr53} and Wright~\cite{Wright34} that are far more fruitful to the design of performing numerical algorithms (see Appendix~\color{cyan}A\color{black}\ for more detail on such integral representations).
\end{rem}
Also, on evaluating the residues at the poles of $\Gamma(1-s)$, there results the following analytic continuations of the one- and two-parametric M-L functions,
\begin{equation}
E_{\alpha}(z) = - \lsum_{n=1}^\infty \frac{z^{-n}}{\Gamma(1-\alpha n)}\ \quad \tand\ \quad %
E_{\alpha,\beta}(z) = - \lsum_{n=1}^\infty \frac{z^{-n}}{\Gamma(\alpha n + \beta)}.
\end{equation}
Many important propertie of $E_{\alpha}(z)$ and $E_{\alpha,\beta}(z)$ follow from the integral representation of the two-parametric M-L function given by D{\v{z}}rba{\v{s}}jan~\cite[\color{cyan}1952, 1960, 1966]{Dzrba52,Dzrba60,Dzrba66} and Erd{'e}lyi {\em et al.}~\cite[Vol.~I-III, 1953]{ErMaObTr53} (which reduces to $E_{\alpha}(z) := E_{\alpha,1}(z)$ when $\beta = 1$)
\begin{equation} \label{complexmlintrep}
E_{\alpha,\beta}(z) = \frac{1}{2\pi \ri} \lint_{\cH} \frac{t^{\alpha-\beta}\, \re^t} %
{t^{\alpha} - z}\, \rd t\ \qquad (z,\, \alpha,\, \beta\in \C,\ \Re(\alpha) > 0,\, \Re(\beta) > 0),
\end{equation}
where the contour of integration (or {\em Hankel path}) $\cH$ is the loop starting and ending at $-\infty$, and encircling the disk $|t|\le |z|^{1/\alpha}$ in the positive sense (counterclockwise): 
$|\arg(t)| < \pi$ on $\cH$. The integrand in~\eqref{complexmlintrep} has a branch point at $t = 0$, and the complex $t$-plane is cut along the negative real axis. The integrand is single-valued in the cut plane, where the principal branch of $t^{\alpha}$ is taken.\footnote{%
The representation~\eqref{complexmlintrep} can be proved by expanding the integrand in powers of $t$ and integrating term by term by making use of Hankel's integral path $\cH$ for the reciprocal of the gamma function, namely
\[
\frac{1}{\Gamma(\beta)} = \frac{1}{2\pi \ri} \lint_{\cH} \re^t\, t^{-\beta}\rd t\ \qquad (\beta\in \C).\]
Hankel's integral representation of $1/\Gamma(z)$ ($z\in \C$) is shown in Appendix~\color{cyan}C\color{black}\ (see also, e.g., Temme~\cite[\color{cyan}Chap.~3, \S3.2]{Temme96}).
}
(Detailed studies of M-L and M-L type functions may be found, e.g., in D{\v{z}}rba{\v{s}}jan~\cite{Dzrba66}, Erd\'elyi {\em et al.}~\cite{ErMaObTr53}, Gorenflo {\em et al.}~\cite[\color{cyan}Chap.~4, \S4.8]{GoKiMaRo14}, Kilbas {\em et al.}~\cite[\color{cyan}Chap.~1, \S1.8]{KiSrTr06}, etc.)

The integral representation~\eqref{complexmlintrep} of the two-parametric M-L function is used for instance to obtain the asymptotic expansion of $E_{\alpha,\beta}(z)$\ ($|z|\to \infty$) which is different in the case $0 < \alpha < 2$ from the case when $\alpha\ge 2$. According to the values of 
$|\arg z|$, there result the corresponding asymptotic expansions of $E_{\alpha,\beta}(z)$ and 
$E_{\alpha} := E_{\alpha,1}$ when $|z|$ is large (see Appendix~\color{cyan}A\color{black}).

The two-parametric M-L function defined in the form of Eq.~\eqref{def_mitt2} exists only for the values on parameters $\Re(\alpha) > 0$ and $\beta\in \C$. However, an analytic continuation of $E_{\alpha,\beta}(z)$ depending on real parameters $\alpha,\, \beta$ may be performed by extending its domain to negative values of $\alpha$. The integral representation~\eqref{2mlintrep} is similar to~\eqref{complexmlintrep} along the Hankel path $\cH$ (see, e.g., \cite{Dzrba60,GoKiMaRo14,KiSrTr06}),
\begin{equation} \label{2mlintrep}
E_{\alpha,\beta}(z) = \frac{1}{2\pi} \lint_{\cH} \frac{t^{\alpha-\beta}\, \re^t}{t^{\alpha} - z}\, %
\rd t\ \qquad (z\in \C,\ \alpha,\, \beta\in \R,\, \alpha < 0).
\end{equation}
The integral representation~\eqref{2mlintrep} makes it possible to define the function $E_{-\alpha,\beta}(z)$ for real negative values of the first parameter (see Appendix~\color{cyan}B\color{black}\ for a proof).

As another example, consider the Mellin--Barnes contour integral represention of Wright's function 
$\phi(\alpha,\beta;z)$ defined in~\eqref{wrightphi}. It may be given by the use of the sum of residues technique, as in Lemma~\ref{lem_mbrep},
\begin{equation} \label{phimlrep}
\phi(\alpha,\beta;z) %
= \frac{1}{2\pi \ri}\, \lint_\cC \frac{\Gamma(s)}{\Gamma(\beta-\alpha s)}\, (-z)^{-s} \rd s,
\end{equation}
where the path of integration $\cC$ separates all the poles at $s = -\nu$\ ($\nu\in \N$) to the left.
If $\cC =  (c-\ri \infty, c+\ri \infty)$ ($c\in \R$), then the representation~\eqref{phimlrep} is valid provided either of the following two conditions holds: 
\begin{flalign*}
(i)\ 0 < \alpha < 1,\, |\arg(-z)| < (1 - \alpha)\pi/2\ \quad & \tand\ \quad z\neq 0\ \qquad \tor\\
(ii)\ \alpha = 1,\, \Re(\beta) > 1 + 2c,\, \arg(-z) = 0\ \quad & \tand\ \quad z\neq 0.
\end{flalign*}
(See Kilbas {\em et al.} in~\cite[\color{cyan}Cor.~4.1]{KiSaSa04} for more detailed conditions on $\cC$.)

\begin{rem}
The Laplace transforms $\cL[\phi]$ and $\cL[E_{\alpha,\beta}]$ can be expressed in terms of each other:
\begin{subequations}
\begin{multline*}
\cL\bl(\phi(\alpha,\beta;t)\br)(s) = \lint_0^\infty \re^{-st} \phi(\alpha,\beta;t)\rd t %
= \frac{1}{s}\, E_{\alpha,\beta}\l(\frac{1}{s}\r)\ \qquad \tand\\
\cL\bl(E_{\alpha,\beta}(t)\br)(s) = \frac{1}{s}\, \phi\l(\alpha,\beta;\frac{1}{s}\r) %
= \frac{1}{s}\, \ps{}{2}\psi_1^{}\l( \sbs{ (1,1), (1,1)\\[.1cm](\beta, \alpha)}\, ; \frac{1}{s}\r) %
\quad (\alpha > -1,\ \alpha,\, \beta\in \C,\ \Re(s) > 0).
\end{multline*}
\end{subequations}
\end{rem}
Further, under the constraints $\Re(\alpha) > 0$, $\Re(\beta) > 0$, $\Re(\gamma) > 0$, 
$s > r^{1/\Re(\alpha)}$, the Laplace transform of Prabhakar's three-parametric M-L function is equal to
\begin{subequations}
\begin{flalign}
\cL\bl(E_{\alpha,\beta}^\gamma(t)\br)(s) &:= E_{\alpha,\beta}^\gamma\l(t^\alpha\r) %
= \frac{1}{s^\beta}\, \l(1-\frac{1}{s^\alpha}\r)^{-\gamma} = \frac{1}{s}\, \ps{}{2}\psi_1^{}%
\bgl( \sbs{ (\gamma, 1), (1, 1)\\[.1cm] (\beta, \alpha)}\, ; \frac{1}{s}\bgr) \label{laplace}\\
\intertext{%
and, by applying the Mellin inversion formula to the Mellin--Barnes representation of $E_{\alpha,\beta}(z)$ (with $\alpha\in \R_+$,\ $\beta, \gamma\in \C$, $\beta\neq 0$,\ $|\arg z| < \pi$. See Appendix~\color{cyan}A\color{black}), its Mellin transform is equal to
}
\cM\bl(E_{\alpha,\beta}^\gamma(-wt)\br)(s) &:= \lint_0^\infty t^{-s} %
E_{\alpha,\beta}^\gamma\l(t^\alpha\r)\rd t =  \frac{\Gamma(s) \Gamma(\gamma-s)} %
{\Gamma(\gamma) \Gamma(\beta-\alpha s)}\, w^{-s}\ \qquad (0 < \Re(s) < \Re(\gamma)). \label{mellin}
\end{flalign}
\end{subequations}

\subsection{Higher transcendental functions} \label{htf}
For any positive integers $p, q$, the generalized complex function $\ps{}{p}\psi_q^{}(z)$ has been defined in the seminal papers of Fox~\cite{Fox28} and Wright's papers of 1934--1940~\cite{Wright34} (see also, e.g., \cite{AskDaal10}). Later on, the so-called Fox--Wright (or Fox--Wright psi) function has been extensively studied by Gorenflo {\em et al.}~\cite[\color{cyan}Chaps.~3--6]{GoKiMaRo14}, Kilbas {\em et al.}~\cite{Kilbas05,KiSaSa04,KiSrTr06}, Kiryakova~\cite{Kirya94,Kirya06,Kirya10a,Kirya10b,Kirya15}, 

Here and in the sequel, the following synthetic vector notations are used for convenience whenever no ambigity may arise:
\[
\bm{\rap} := (a_1,\ldots,\, a_p),\ \qquad \Gamma(\bm{\rap}) %
:= \lprod_{i=1}^p \Gamma(a_i)\ \qquad \tand \ \qquad (a_i, \alpha_i)_1^p %
:= (a_1, \alpha_1),\ldots,\, (a_p, \alpha_p).\]

\begin{defi} \label{def_foxwright}
The {\em generalized Fox--Wright psi function} of $z\in \C$ is defined formally by the series
\begin{equation} \label{eq_foxwright}
\ps{}{p}\psi_q^{}(z) = %
\ps{}{p}\psi_q^{}\bgl( \sbs{ (a_i, \alpha_i)_1^p\\[.1cm] (b_j, \beta_j)_1^q}\, ; z\bgr) %
:= \lsum_{n\ge 0}\, \frac{\lprod_{i=1}^p \Gamma(a_i + \alpha_i n)} %
{\lprod_{j=1}^q \Gamma(b_j + \beta_j n)}\, \frac{z^n}{n!}\,,
\end{equation}
where $a_i, b_j\in \C$, $\alpha_i, \beta_j\in \R$ ($i = 1,\ldots, p$ and $j = 1,\ldots, q$).
\end{defi}
Wright proved several results on the asymptotic expansion of $\ps{}{p}\psi_q^{}(z)$ for all values of the argument $z$ which fullfill the property $\lsum_{j=1}^q \beta_j - \lsum_{i=1}^p \alpha_i > -1$. Under this  constraint, it was proved in~\cite{KiSaTr02} that the series is an entire function of $z\in \C$. Conditions of convergence are provided by the following lemma.

\begin{lem}[Kilbas--Srivastava--Trujillo~{\cite[Thm.~1]{KiSrTr06}}] \label{lem_foxwrightcvce}
Under the assumptions stated in Definition~\ref{def_foxwright}, let
\begin{equation} \label{eq_foxwrightcvce}
\Delta := \lsum_{j=1}^q \beta_j - \lsum_{i=1}^p \alpha_i,\ \ %
\delta := \lprod_{i=1}^p |\alpha_i|^{-\alpha_i}\, \lprod_{j=1}^q |\beta_j|^{\beta_j}\ \  %
\tand\ \ \mu := \lsum_{j=1}^q b_j - \lsum_{i=1}^p a_i + \frac{p - q}{2}.
\end{equation}
Then, the conditions of convergence are as follows
\bi
\item[(i)] If $\Delta > -1$, then the series in~\eqref{eq_foxwright} is absolutely convergent for all 
$z \in \C$.
\item[(ii)] If $\Delta = -1$, then the series in~\eqref{eq_foxwright} is absolutely convergent for 
$z\le \delta$, and for $z = \delta$ and $\Re(\mu) > 1/2$.
\ei
\end{lem}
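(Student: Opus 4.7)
The plan is to estimate the coefficient $c_n := \l(\lprod_{i=1}^p \Gamma(a_i+\alpha_i n)\r) / \l(\lprod_{j=1}^q \Gamma(b_j+\beta_j n)\cdot n!\r)$ of the series in~\eqref{eq_foxwright} as $n \to \infty$ via Stirling's formula, and then apply the root test. I would substitute the expansion $\log\Gamma(a+\alpha n) = (\alpha n + a - \tfrac12)\log(\alpha n) - \alpha n + \tfrac12\log(2\pi) + O(1/n)$ (legitimate because the $\alpha_i,\beta_j$ are positive reals, so the arguments $a_i+\alpha_i n$ and $b_j+\beta_j n$ eventually lie in any sector $|\arg z|<\pi-\epsilon$) into each of the $p$ numerator and $q$ denominator gamma factors, together with the analogous expansion of $\log n!$. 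Splitting $\log(\alpha n)=\log\alpha+\log n$, the collection of like terms rests on three identities: the exponential $-\alpha n$ contributions sum to $n(1+\Delta)$; the $\log n$-coefficients sum to $-n(1+\Delta)-(\mu+\tfrac12)$, since $\lsum a_i - \lsum b_j - (p-q)/2 = -\mu$; and the $\alpha_i,\beta_j$-dependent constants $\lsum(\alpha_i n+a_i-\tfrac12)\log\alpha_i - \lsum(\beta_j n+b_j-\tfrac12)\log\beta_j$ contribute, at the $n$-linear order, exactly $-n\log\delta$ by the very definition of $\delta$. Combining these yields
$|c_n| = C\, n^{-\Re(\mu)-1/2}\, \delta^{-n}\, \l(\re^{1+\Delta}/n^{1+\Delta}\r)^n (1+o(1))$
for some constant $C>0$ depending only on the parameters.

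The two cases then follow immediately. For (i) with $\Delta>-1$, the factor $n^{-n(1+\Delta)}$ forces $|c_n|^{1/n}\to 0$, so $\lsum c_n z^n$ converges absolutely for every $z\in \C$ by the root test. For (ii) with $\Delta=-1$, the factor $\re^{1+\Delta}/n^{1+\Delta}$ equals $1$, hence $|c_n z^n|\sim C n^{-\Re(\mu)-1/2}(|z|/\delta)^n$; the root test gives a radius of convergence exactly $\delta$, and on the circle $|z|=\delta$ the general term behaves like $n^{-\Re(\mu)-1/2}$, whose series is absolutely convergent precisely when $\Re(\mu)+\tfrac12>1$, i.e. $\Re(\mu)>1/2$.

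The main technical delicacy is the bookkeeping step: verifying that the $\alpha_i,\beta_j$-dependent constants collapse at the $n$-linear level into $-n\log\delta$ and contribute only $O(1)$ to lower orders (in particular, the $(a_i-\tfrac12)\log\alpha_i$ and $(b_j-\tfrac12)\log\beta_j$ pieces get absorbed into $C$, as do the $O(1/n)$ Stirling remainders once summed). Once this accounting is in place, the lemma reduces to the two elementary convergence tests noted above.
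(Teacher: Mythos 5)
The paper does not prove this lemma at all: it is stated as a quotation of Kilbas--Srivastava--Trujillo \cite[Thm.~1]{KiSrTr06}, so there is no internal argument to compare yours against. Your Stirling/root-test derivation is the standard proof of that cited theorem and the bookkeeping is correct: the $n\log n$, $n$, and $n\log\alpha_i$ collections do produce $\bigl(\re^{1+\Delta}/n^{1+\Delta}\bigr)^n\,\delta^{-n}$, the $\log n$ coefficient at order $n^0$ is indeed $-\Re(\mu)-\tfrac12$ (real parts pass through because $\log(\alpha_i n)$ is real), and the two cases then follow from the root test and the $p$-series criterion exactly as you say. One caveat: Definition~\ref{def_foxwright} only requires $\alpha_i,\beta_j\in\R$, and the appearance of $|\alpha_i|,|\beta_j|$ in $\delta$ signals that the cited theorem covers negative weights too; your justification of Stirling's formula explicitly assumes the $\alpha_i,\beta_j$ are positive, so for negative $\alpha_i$ you would additionally need the reflection formula (or the ratio $\Gamma(z+\alpha)/\Gamma(z)\sim z^{\alpha}$ applied along the negative direction, avoiding the poles of $\Gamma$, which the blanket phrase ``under the assumptions'' is implicitly excluding). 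With that restriction made explicit, or that extra step supplied, the argument is complete. You might also note that the lemma's ``$z\le\delta$'' should be read as $|z|<\delta$, which is what your estimate actually establishes.
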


When $a_i, b_j\in \R$ ($i = 1,\ldots, p$ and $j = 1,\ldots, q$), the Fox--Wright function 
$\ps{}{p}\psi_q^{}(z)$ has the following integral representation as a Mellin–Barnes contour integral
\begin{equation} \label{foxwrightmbint}
\ps{}{p}\psi_q^{}\bgl( \sbs{(a_i, \alpha_i)_1^p\\[.1cm] (b_j, \beta_j)_1^q}\, ; z\bgr) %
=  \frac{1}{2\pi \ri}\, \lint_{\cH} \frac{\lprod_{i=1}^p \Gamma(a_i + \alpha_i s)} %
{\lprod_{j=1}^q \Gamma(b_j + \beta_j s)}\, \Gamma(s)\ (-z)^{-s}\rd s,
\end{equation}
where the contour integration $\cH$ separates all the poles of $\Gamma(s)$ at $s = -\nu$ ($\nu\in \N$) to the left from all the poles of $\Gamma(a_i + \alpha_i s)$ at $s = (a_i + \nu_i)/\alpha_i$ 
($i = 1,\ldots, p$ and $\nu_i\in \N$ to the right. If $\cH =  (c-\ri \infty, c+\ri \infty)$ ($c\in \R$), then representation~\eqref{foxwrightmbint} is valid provided either of the
following two conditions hold:
\begin{flalign*}
(i)\ & \Delta < 1,\ |\arg(-z)| < (1 - \Delta)\pi/2,\ z\neq 0\ \qquad \tor\\
(ii)\ & \Delta = 1,\ \Re(\mu) > (\Delta + 1)c + 1/2\ \quad \tand \ \quad \arg(-z) = 0,\ z\neq 0.
\end{flalign*}
Conditions for the representation~\eqref{foxwrightmbint} are also given for the cases when 
$\cH = \cL_{-\infty}$ ($\cL_{+\infty}$, resp.) is a loop located in a horizontal strip starting at the point $-\infty+\ri \tau_1$ ($+\infty+\ri \tau_1$, resp.) and terminating at the point $-\infty+\ri \tau_2$ ($+\infty+\ri \tau_2$, resp.) with $-\infty < \tau_1 < \tau_2 < +\infty$.

In turns, the normalized Fox--Wright psi function is defined by
\begin{equation} \label{eq_psi*}
\ps{}{p}\psi_q^{*}\bgl( \sbs{(a_i, \alpha_i)_1^p\\[.1cm] (b_j, \beta_j)_1^q}\, ; z\bgr) := %
\frac{\Gamma(\bm{\rbq})}{\Gamma(\bm{\rap})}\, %
\ps{}{p}\psi_q^{}\l( \sbs{(a_i, \alpha_i)_1^p\\[.1cm] (b_j, \beta_j)_1^q}\, ; z\r),
\end{equation}
and trivially occurs as a generalization of the generalized hypergeometric function $\ps{}{p}F_q(z)$ (see also, e.g., Eq.~\eqref{FW}). Whenever $p = q = 1$, the normalization reduces to
\[
\ps{}{1}\psi_1^{*}\bgl( \sbs{(a, \alpha)\\[.1cm] (b, \beta)}\, ; z\bgr) := %
\frac{\Gamma(b)}{\Gamma(a)}\, \lsum_{n=0}^\infty \frac{\Gamma(a + \alpha n)}{\Gamma(b + \beta n)}\, %
\frac{z^n}{n!}\,.
\]
The importance of $\ps{}{1}\psi_1^{*}$ appears in Fermat's last theorem and in applied problems, such as the solution of trinomial equations.

Now, if we put $\alpha_i = \beta_j = 1$ for all $1\le i\le p$ and $1\le j\le q$ in Definition~\ref{def_foxwright}, then we obtain the following relation between the generalized Fox--Wright function 
$\ps{}{p}\psi_q^{}$ and the generalized hypergeometric function $\ps{}{p}F_q^{}$,\footnote{%
From Eq.~\eqref{eq_psiF}, the conditions of convergence of $\ps{}{p}F_q^{}(z)$ are easily recovered. When $p\le q$ and $\Delta\ge 0$, $\ps{}{p}F_q^{}(z)$ is an entire function of $z\in \C$. But if $p=q+1$, it is absolutely convergent in the unit disk $U = \lbrace z\,|\, |z| < 1\rbrace$ ($\delta = 1$) and diverges for all $z\neq 0$ if $p > q+1$. In the case when the variable $z$ in $\ps{}{q+1}F_q^{}(z)$ is not equal to unity, it is assumed tacitly that $|z| < 1$, while if $z = 1$, the condition 
$\Re\l(\lsum_{j=1}^q \beta_j - \lsum_{i=1}^{q+1} \alpha_i\r) > 0$\ is required (see~\cite[\color{cyan}\S4.1]{ErMaObTr53}).
}
\begin{equation} \label{eq_psiF}
\ps{}{p}\psi_q^{}\bgl( \sbs{(a_i, 1)_1^p\\[.1cm] (b_j, 1)_1^q}\, ; z\bgr) = %
\frac{\Gamma(\bm{\rap})}{\Gamma(\bm{\rbq})}\, \ps{}{p}F_q^{}\l( \sbs{\bm{\rap}\\[.1cm] %
\bm{\rbq}}\, ; z\r).
\end{equation}
The $\ps{}{p}\psi_q^{}$ and $\ps{}{p}F_q^{}$ functions are special cases of the more general special functions Fox $H$- and Meijer $G$-functions.

General conditions of existence of M-L type series defined in~\eqref{def_mitt1} to~\eqref{def_Prab} and their relations with generalized Fox--Wright-, Fox $H$- and Meijer $G$-functions (in Definitions~\ref{def_foxwright}, \ref{def_foxfunction} and \ref{def_meijerG}, resp.) are fully studied, for example, in~\cite{GoKiMaRo14,HaMaSa11,KiSrTr06,MaSaHa10}, along with their representations with suitable Mellin--Barnes contour integrals.

\begin{rem} \label{rem3}
The  generalized {\em Lommel--Wright function} denoted by $J_{\rho,\lambda}^{\mu,\nu}(z)$ is defined by
\begin{flalign}
J_{\rho,\lambda}^{\mu,\nu}(z) &:= \lsum_{n=0}^\infty \, \frac{(-1)^n} %
{\Gamma(\lambda+n+1)^\nu \Gamma(\rho+\lambda+\mu n +1)}\, \l(\frac{z}{2}\r)^{\rho+2\lambda+2n}\nonumber\\
&= \l(\frac{z}{2}\r)^{\rho+2\lambda}\, \ps{}{1}\psi_{\nu+1}^{}\bgl( \sbs{ (1, 1) %
\\[.1cm] (\lambda + 1, 1)^\nu, (\rho + \lambda + 1, \mu)}\, ; -z^2/4\bgr), \label{def_lommelwright}
\end{flalign}
where $\mu > 0$, $\rho,\, \lambda\in \C$ and $\nu\in \N$.
Turning to Wright's generalization of the classical Bessel function $J_\rho(z)$, that is the {\em Bessel--Wright function} $J_{\rho}^\mu(z)$, it is defined by 
\[
J_{\rho}^\mu(z) := \lsum_{n=0}^\infty \frac{(-z)^n}{n! \Gamma(\rho + \mu n + 1)}\, %
= \ps{}{0}\psi_{1}^{}(-; (\rho+1, \mu); -z).\]
Eq.~\eqref{def_lommelwright} readily yields the following relationships with the classical Bessel function, its generalization and a specialized Lommel--Wright function:  
$J_\rho(z)(z) = J_{\rho,0}^{1,1}(z),$\  $J_{\rho,\lambda}^\mu(z) = J_{\rho,\lambda}^{\mu,1}(z)$, as well as $J_{\rho}^{\mu}(z) = z^{-\rho/2} J_{\rho,\lambda}^{\mu,1}(2\sqrt{2})$.
\end{rem}

\begin{defi} \label{def_foxfunction}
The $H${\em -function} was introduced by Fox in~\cite{Fox28} as a generalized hypergeometric function defined by an integral representation in terms of the Mellin--Barnes contour integral
\begin{equation} \label{def_fox}
H_{p,q}^{m,n}\l[z\,\bgl|\, \sbs{(a_i, \alpha_i)_1^p\\[.1cm] (b_j, \beta_j)_1^q} \r] := %
\frac{1}{2\pi \ri} \lint_{\cL} \frac{\lprod_{j=1}^m \Gamma(b_j - \beta_j s) %
\lprod_{i=1}^n \Gamma(1 - a_i + \alpha_i s) } {\lprod_{j=m+1}^q \Gamma(1 - b_j + \beta_j s) %
\lprod_{i=n+1}^p \Gamma(a_i - \alpha_i s) }\ z^{s}\rd s.
\end{equation}
Here $\cL$ is a suitable contour in $\C$ and $z^s = \exp\bl(s\ln |z| + \ri \arg z\br)$. Due to the occurrence of the factor $z^s$ in the integrand, the $H$-function is, in general, multi-valued, but it can be made one-valued on the Riemann surface of $\log z$ by choosing a proper branch. In Eq.~\eqref{def_fox}, an empty product, when it occurs, is taken to be equal to 1. The order ($m, n, p, q$) consists in integers with $0\le m\le q$ and $0\le n\le p$. The parameters fulfills the conditions 
$a_i,\, b_j\in \C$ and $\alpha_i, \beta_j\in \R_+$  ($i = 1,\ldots, p$), ($j = 1,\ldots, q$). As for the conditions on the types of contours ensuring the existence and analyticity of the Fox $H$-function in disks $D\subset \C$, one can see \cite{AskDaal10,Fox28},\cite{KiSrTr06},\cite[\color{cyan}App.]{Kirya94},\cite[\color{cyan}Chap.~1]{SrivastManocha84},\cite{Wright34}, etc.
\end{defi}

Definition~\ref{def_foxfunction} is still valid when the $\alpha_i$'s and the $\beta_j$'s are positive rational numbers. Therefore, the $H$-function contains, as special cases, all of the functions which are expressible in terms of the $G$-function. More importantly, it contains the generalized hypergeometric Fox--Wright function defined in Definition~\ref{def_foxwright}, the generalized Bessel function $J_\nu^\mu$, the generalizations of the M-L function $E_{\alpha,\beta}$, etc. For example, 
$\ps{}{p}\psi_q^{}(z)$ is one of these (additional) special cases of the $H$-function, which is obviously not contained in the class of $G$-function. By Definition~\ref{def_foxwright}, it is also easily extended to the complex plane as follows,
\begin{equation} \label{eq_psiH}
\ps{}{p}\psi_q^{}\bgl( \sbs{(a_i, \alpha_i)_1^p\\[.1cm] (b_j, \beta_j)_1^q}\, ; z\bgr) = %
\lsum_{n=0}^\infty \frac{\lprod_{i=1}^p \Gamma(a_i + \alpha_i n)} %
{\lprod_{j=1}^q \Gamma(b_j + \beta_j n)}\, \frac{z^n}{n!} =  %
H_{p,q+1}^{1,p}\l[-z\,\bgl|\, \sbs{(1 - a_i, \alpha_i)_1^p\\[.1cm] (0, 1), (1 - b_j, \alpha_j)_1^q} \r]\,.
\end{equation}
Proper conditions of convergence are shown from Definition~\ref{def_foxwright} in~\cite[\color{cyan}\S2.1]{AskDaal10}.

The special case for which the Fox $H$-function reduces to the Meijer $G$-function is when 
$\alpha_1 =\cdots = \alpha_p = \beta_1 =\cdots = \beta_q = \kappa$, $\kappa > 0$ (see same references as above). In that case,
\begin{equation} \label{eq_FoxMeijer}
H_{p,q}^{m,n}\l[z\,\bgl|\, \sbs{(a_i, \kappa)_1^p\\[.1cm] (b_j, \kappa)_1^q} \r] := %
\frac{1}{\kappa}\, G_{p,q}^{m,n}\l( \sbs{\bm{\rap}\\[.1cm] \bm{\rbq}}\, \bgl|\, z^{1/\kappa}\r),
\end{equation}
Additionally, when setting $\alpha_i = \beta_j = 1$ in Eq.\eqref{eq_psiH} (i.e. $\kappa = 1$ in Eq.~\eqref{eq_FoxMeijer}), the Fox $H$- and the Fox--Wright Psi functions turn readily into the Meijer $G$-function. A general definition of the complex $G$-function is given by the following line integral of Mellin--Barnes type and may be viewed as an inverse Mellin transform.

Meijer's G-function $G_{p,q}^{m,n}\bgl( \sbs{\bm{\rap}\\[.1cm] \bm{\rbq}}\, \bgl|\, z\bgr)$ was actually first introduced as a generalization of the Gau{\ss} hypergeometric function presented in the form of the series 
\begin{equation} \label{meigerGF}
G_{p,q}^{m,n}\l( \sbs{1 - \bm{\rap}\\[.1cm] 0, 1 - \bm{\rbq}}\, \bgl|\, -z\r) %
= \frac{\Gamma(\bm{\rap})}{\Gamma(\bm{\rbq})}\, \ps{}{p}F_q^{}\l(\bm{\rap}; \bm{\rbq}; z\r).
\end{equation}
Later this definition was replaced by the Mellin–Barnes representation of the $G$-function.

\begin{defi} \label{def_meijerG}
The {\em Meijer $G$-function} of the order $(m,n,p,q)$ is defined by an integral representation of Mellin--Barnes contour integral given by
\begin{equation} \label{eq_meijer}
G_{p,q}^{m,n}\bgl( \sbs{\bm{\rap}\\[.1cm] \bm{\rbq}}\, \bgl|\, z\bgr) := %
\frac{1}{2\pi \ri} \lint_{\cL} \frac{ \lprod_{j=1}^m \Gamma(b_j - s) %
\lprod_{i=1}^n \Gamma(1 - a_i + s)} {\lprod_{j=m+1}^q \Gamma(1 - b_j + s) %
\lprod_{i=n+1}^p \Gamma(a_i - s) }\ z^s\rd s,
\end{equation}
where $\cL$ is a suitably chosen contour such that $z\neq 0$, $z^s = \exp\l(s\ln |z| + \ri \arg z\r)$ with a single valued branch of $\arg z$. Eq.~\eqref{eq_meijer} is valid under the following assumptions:
\bi
\item[(a)] The empty product is assumed to be equal to 1. 
\item[(b)] Parameters $(m,n,p,q)$ satisfy the relation $0\le m\le q$ and $0\le n\le p$, where $m$, $n$, $p$ and $q$ are non-negative integers.
\item[(c)] The complex numbers $a_i - b_j\neq 1,\, 2,\, 3,\ldots$ for $i = 1,\, 2,\, \ldots,\, n$ and 
$j = 1,\, 2,\, \ldots,\, m$, which implies that no pole of any $\Gamma(b_j - s)$, %
$j= 1,\, 2,\, \ldots,\, m$, coincides with any pole of $\Gamma(1 - a_i + s)$, %
$i = 1,\, 2,\, \ldots,\, n$.
\ei
\end{defi}

\subsubsection{Elementary identities of higher transcendental functions} \label{elids}
Among the most useful elementary properties of the Meijer $G$-function are two straightforward consequences of Definition~\ref{def_meijerG}, from which the first identity is readily derived,
\begin{flalign} \label{prop_meijer}
z^\mu G_{p,q}^{m,n}\bgl( \sbs{\bm{\rap}\\[.1cm] \bm{\rbq}}\, \bgl|\, z\bgr) &= %
G_{p,q}^{m,n}\bgl( \sbs{\bm{\rap} + \mu\\[.1cm] \bm{\rbq} + \mu}\, \bgl|\, z\bgr),
\intertext{%
while the second one exhibits the fact that we can, w.l.o.g., suppose that $p\le q$ in the discussion of the $G$-function,
}
G_{p,q}^{m,n}\bgl( \sbs{\bm{\rap}\\[.1cm] \bm{\rbq}}\, \bgl|\, 1/z\bgr) &= %
G_{q,p}^{n,m}\bgl( \sbs{1 - \bm{\rbq}\\[.1cm] 1 - \bm{\rap}}\, \bgl|\, z\bgr).
\end{flalign}
According to the general translation formulas of the Fox $H$- and Meijer $G$-functions (see~\cite{AskDaal10,KiSrTr06}), we also have  
\begin{subequations}
\begin{flalign}
H_{p,q}^{m,n}\l[z\,\bgl|\, \sbs{(a_i, \alpha_i)_1^p\\[.1cm] (b_j, \beta_j)_1^q} \r] &= %
H_{q,p}^{n,m}\l[1/z\,\bgl|\, \sbs{(1 - b_j, \beta_j)_1^q\\[.1cm] (1 - a_i, \alpha_i)_1^p} \r]\ %
\qquad \tand \label{eq_hypFoxH}\\
\ps{}{p}F_q\bgl(\sbs{\bm{\rap}\\[.1cm] \bm{\rbq}};z\bgr) &= \frac{\Gamma(\bm{\rbq})}{\Gamma(\bm{\rap})}\ %
G_{p,q+1}^{1,p}\l( \sbs{1-\bm{\rap}\\[.1cm] 0, 1-\bm{\rbq}}\, \bgl|\, -z\r) = \frac{\Gamma(\bm{\rbq})} %
{\Gamma(\bm{\rap})}\ G_{q+1,p}^{p,1}\bgl( \sbs{1, \bm{\rbq}\\[.1cm] \bm{\rap}}\, \bgl|\, -1/z\bgr).\label{eq_hypMeijer}
\end{flalign}
\end{subequations}
Unless there occurs a non-positive integer value for at least one of its upper parameters $\bm{\rap}$, the latter formula~\eqref{eq_hypMeijer} holds, in which case the hypergeometric function terminates in a finite polynomial. Then, the gamma prefactor of either $G$-function vanishes and the parameter sets of the $G$-functions violate the requirement $a_i - b_j\neq$ 1, 2, 3,\ldots for $i = 1, 2 \ldots, n$ and $j = 1, 2,\ldots m$ from Definition~\ref{def_meijerG} of $G_{p,q}^{m,n}$ (The conditions of convergence are exposed in details in~\cite{AskDaal10}.) Therefore, calculations on L-S and R-S fractional integration operators are amenable to Laplace and Mellin integral transforms for example, as shown in~\cite{Dzrba66},\cite[\color{cyan}App.]{Kirya06},\cite[\color{cyan}Chap.~2, \S7]{SamKilMar93}. 

The $G$- and $H$-functions encompass almost all the elementary and special functions. Among them, the 
M-L function and the Fox--Wright psi function. From Eq.~\eqref{prop_meijer}, $\ps{}{p}\psi_q^{}(z)$ can easily simplify to the generalized hypergeometric function $\ps{}{p}F_q(z)$ with parameters $a_i$'s and $b_j$'s and a gamma prefactor which consists of these parameters. According to the relationship between the $H$- and $G$-functions in Eq.~\eqref{eq_FoxMeijer} and to the expression of $\ps{}{p}\psi_q^{}(z)$ in terms of the $H$-function in Eq.~\eqref{eq_psiH}, the following useful identities hold

\begin{flalign} 
\ps{}{p}\psi_q^{}\bgl( \sbs{(a_i, 1)_1^p\\[.1cm] (b_j, 1)_1^q}\, ; z\bgr) &= %
H_{p,q+1}^{1,p} \l[-z\,\bgl|\, \sbs{(1 - a_i, 1)_1^p\\[.1cm] (0, 1), (1 - b_j, 1)_1^q} \r]\nonumber\\
&= G_{p,q+1}^{1,p}\bgl( \sbs{1 - \bm{\rap}\\ 0, 1 - \bm{\rbq}}\, \bgl|\, -z\bgr) %
= \frac{\Gamma(\bm{\rap})}{\Gamma(\bm{\rbq})}\ \ps{}{p}F_q\bgl( \sbs{\bm{\rap}\\[.1cm] %
\bm{\rbq}}\, ; z\bgr), \label{FW}
\end{flalign}
where $z,\, a_i,\, b_j\in \C$ ($i = 1,\ldots, p$; $j = 1,\ldots, q$) and %
$\Re\l(\lsum_{j=1}^q b_j - \lsum_{i=1}^p a_i\r) > 0$. Conversely, setting $a_i = b_j = 1$ in Eq.~\eqref{eq_psiH} yields the generalized hypergeometric function 
$\ps{}{p}F_q\l( \sbs{\alpha_1,\ldots, \alpha_p\\[.1cm] \beta_1,\ldots,\beta_q}\, ; z\r)$.

Finally, notice that the $H$- and $G$-functions are analytic functions of $z$ with a branch point at the origin. Especially, the kernel functions $H_{m,m}^{m,0}$ and $G_{m,m}^{m,0}$ ($n = 0$, $m = p = q$) of the operators of generalized fractional calculus that we consider, are analytic functions in the unit disc and vanish identically outside it, i.e. for $|z| > 1$.

\section{Fractional calculus of generalized M-L type functions} \label{genfracint}
Here and throughout the paper, the class of functions we deal with is a space of analytic functions in some domain $D\subseteq \C$. More precisely such functions are currently taken in 
\[
\fH_\mu(D) := \bl\{f(z) = z^\mu \tilde{f}(z) \,|\, \tilde{f}\in \fH(D)\br\},\] 
where $\mu\ge 0$ is a real number and $\fH(D)$ is the space of analytic (single-valued) functions in a complex domain $D$, starlike with respect to the origin $z = 0$.
For $\alpha\in \C$ and $\Re(\alpha) > 0$, composition relations between a very large number of generalized M-L type functions and the operators of fractional integration $\cI^{\alpha}$ and differentiation $\cD^{\alpha}$ are well established.

A fairly natural functional space where the {\em generalized fractional differintegration operators} may be considered is the space of {\em weakly differentiable functions} (i.e. Fréchet a.e.) in 
$L_{\rloc}^1(\Omega)$, where $L_{\rloc}^1(\Omega)$ denotes the set of locally integrable functions on a compact subset $\Omega$ of $\R$ (see, e.g., Bergounioux {\em et al.}~\cite{BeLeNaTo16}, Kiryakova~\cite[\color{cyan}Chap.~1]{Kirya94}, \cite{Kirya10a,Kirya10b}, Samko {\em et al.}~\cite[\color{cyan}\S1.1]{SamKilMar93}) and Srivastava~\cite{Srivast16}.

\subsection{Riemann--Liouville fractional calculus} \label{rlfraccal}
For a suitable class of weakly differentiable functions $\varphi\in L_{\rloc}^1\bl((a,b]\br)$ ($a < b\in \R$), it is convenient to set $a = 0$, without loss of generality (see definition below). Then, the related fractional integration operators are referred to by the notation $\cI_{0+}^{\alpha}$ and 
$\cI_{x-}^{\alpha}$, respectively and the {\em left-sided} and {\em right-sided} {\em Riemann–Liouville fractional integrals} of order $\alpha\in \C$ are defined by  
\begin{subequations}
\begin{flalign}
\cI_{0+}^{\alpha} f(x) &:=\frac{1}{\Gamma(\alpha)} \lint_0^x \frac{1}{(x-t)^{1-\alpha}}\, f(t)\, \rd t\ \qquad (\alpha\in \C,\, \Re(\alpha) > 0,\ x > 0), \label{eq_RLop1}\\
\cI_{x-}^{\alpha} f(x) &:= \frac{1}{\Gamma(\alpha)} \lint_x^\infty \frac{1}{(t-x)^{1-\alpha}}\, f(t)\, \rd t\ \qquad (\alpha\in \C,\, \Re(\alpha) > 0,\ x < 0). \label{eq_RLop2}
\end{flalign}
\end{subequations}
This notion is a natural consequence of the well known formula (Cauchy--Dirichlet?), that reduces the calculation of the $n$-fold primitive ($n\in \N$) of a real function $f(x)$ to a single integral of convolution type 
\[
I_{a+} f(x) = \frac{1}{(n-1)!} \lint_a^x (x - t)^{n-1} f(t)\, dt,\]
which equals 0 at $t = a$ with its derivatives of order $1, 2,\ldots, n-1$. The definition also requires $f(x)$ and $I_{a+}f(x)$ to be {\em causal functions}, that is, vanishing for $x < 0$.

As for the corresponding {\em Riemann--Liouville fractional differentiation operators}, $\cD_x^{\alpha}$, they are defined by
\begin{subequations}
\begin{flalign}
\l(\cD_{0+}^{\alpha} f\r)(x) &:= \l(\frac{\rd}{\rd x}\r)^{n}\, \l(\cI_{0+}^{n-\alpha} f\r)(x) %
= \frac{1}{\Gamma(n-\alpha)}\, \l(\frac{\rd}{\rd x}\r)^{n}\, %
\lint_0^x \frac{f(t)}{(x-t)^{1-n+\alpha}}\, \rd t\ \quad (x > 0), \label{eq_diffop1}\\
\l(\cD_{x-}^{\alpha} f\r)(x) &:=\l(\frac{\rd}{\rd x}\r)^{n}\,\l(\cI_{x-}^{n-\alpha} f\r)(x) %
= \frac{1}{\Gamma(n-\alpha)}\, \l(\frac{\rd}{\rd x}\r)^{n}\, %
\lint_x^\infty \frac{f(t)}{(t-x)^{1-n+\alpha}}\, \rd t\ \quad (x < 0), \label{eq_diffop2}
\end{flalign}
\end{subequations}
where $\lfloor \Re(\alpha)\rfloor$ denotes the integral part of $\Re(\alpha)$, 
$\l\{\alpha\r\} :=\alpha - \lfloor \Re(\alpha)\rfloor$ denotes its fractional part and 
$n := \lceil  \Re(\alpha)\rceil = \lfloor \Re(\alpha)\rfloor + 1$ (see, e.g., \cite{KiSaSa04} and \cite[\color{cyan}\S\S2.1--2.4]{SamKilMar93}).
All such well-defined Riemann--Liouville (R--L for short) fractional operators are called respectively the left-and right-sided fractional integrals and fractional derivatives of order $\alpha$. In a synthetic form, the R-L operators are denoted by $\cR_z^\alpha$ ($\alpha\in \C$) and written as
\[
\cR_{z}^{\alpha} f(z) :=
\begin{cases}
\frac{1}{\Gamma(\alpha)} \lint_0^z (z-t)^{\alpha-1} f(t)\rd t\ & \quad \tand \qquad %
\frac{1}{\Gamma(\alpha)} \lint_z^\infty (t-z)^{\alpha-1} f(t)\rd t \qquad  (\Re(\alpha) > 0),\\[.3cm]
\frac{\rd^n}{\rd z^n} \bl(\cR_z^{\alpha + n} f\br)(z) \qquad & (-n < \Re(\alpha)\leq 0, %
\quad n = \lfloor \Re(\alpha)\rfloor + 1\in \N),\\
\end{cases}
\]
provided that the defining integrals exists.

For example, it is easily seen that, for $Re(\mu) > -1$, $z > 0$,\footnote{%
From Eq.~\eqref{diffex1} we can check the intuitive idea that $\cD^{-1}$ is the integration operator. Examine indeed the operator $\cD^{-1}$ applied to a monomial.
\[
\cD_{0+}^{-1}(z^\mu) = \frac{\Gamma(\mu+1)}{\Gamma(\mu+2)}\, z^{\mu+1} = \frac{z^{\mu+1}}{\mu+1},\]
from which we see that effectively this is the integral operator. Now, by linearity of the operator we obtain the standard definition of the integral and in general we write
\[
\cD_{0+}^{-1}\bl(f(z)\br) = \lint_0^z f(t)\rd t.\]
}
\begin{flalign}
\cD_{0+}^{\alpha}(z^\mu) = \frac{\Gamma(\mu+1)}{\Gamma(\mu+1-\alpha)}\, z^{\mu-\alpha},\ %
& \qquad \tand\ \qquad \cD_{0+}^{\alpha}(z^{-\mu}) = \frac{\Gamma(\mu+\alpha)}{\Gamma(\mu)}\, %
z^{-\mu-\alpha} \label{diffex1}\\
\intertext{%
and, more importantly, that
}
E_{\alpha,\beta}^\gamma(z) &= \frac{1}{\Gamma(\gamma)}\, %
\cD_{0+}^{\alpha}\l(z^{\gamma-1} E_{\alpha,\beta}(z)\r)\ \qquad (\Re(\gamma) > 0). \label{diffex2}
\end{flalign}

\subsection{Saigo's fractional differintegration operators} \label{saigo}
Samko~{\em et al.}~\cite[\color{cyan}Sec.~2, \S2]{SamKilMar93} and Erd\'elyi--Kober extended the R--L fractional operators (see, e.g., Kilbas {\em et al.} in~\cite[\color{cyan}\S2.6]{KiSrTr06}) and references therein). A useful generalization of fractional integral operators was later introduced by Saigo~\cite{Saigo78,Saigo96} by adding Gau{\ss}'s hypergeometric function in the kernel. Saigo's operators include Weyl's type and Erd\'elyi--Kober's ones.

\begin{defi} [Saigo~{\cite[\color{cyan}p.~393]{SaiMae96}}] \label{def_saigo}
Let $\alpha,\beta,\gamma\in \C$ and $\Re(\alpha) > 0$. For a suitable class $\cC$ of complex functions specified below, Saigo's left- and right-sided operators of fractional integration are defined for $x\in \R^{+}$ by
\begin{subequations}
\begin{flalign} 
\l(\cI_{0+}^{\alpha,\beta,\gamma} f\r)(x) &:= \frac{x^{-\alpha-\beta}}{\Gamma(\alpha)}\, %
\lint_0^x (x-t)^{\alpha-1}\, \ps{}{2}F_1\l( \sbs{\alpha+\beta, -\gamma\\[.1cm] \alpha} ; 1 - t/x\r)\, %
f(t)\,\rd t\ \qquad \tand \label{eq_RLEKop1}\\
\l(\cI_{x-}^{\alpha,\beta,\gamma} f\r)(x) &:= \frac{x^{-\alpha-\beta}}{\Gamma(\alpha)}\, %
\lint_x^{\infty} (t-x)^{\alpha-1}\, t^{-\alpha-\beta}\, %
\ps{}{2}F_1\l( \sbs{\alpha+\beta, -\gamma\\[.1cm] \alpha} ; 1 - x/t\r)\, f(t)\,\rd t. \label{eq_RLEKop2}
\end{flalign}
\end{subequations}
Similarly, Saigo's left- and right-sided fractional differentiation operators $\cD_{0+}^{\alpha,\beta,\gamma}$ and $\cD_{x-}^{\alpha,\beta,\gamma}$ corresponding respectively to~\eqref{eq_RLEKop1} and~\eqref{eq_RLEKop2}, write as follows with $\Re(\alpha) > 0$ and $n := \lfloor \Re(\alpha)\rfloor + 1$,
\begin{subequations}
\begin{flalign} 
\l(\cD_{0+}^{\alpha,\beta,\gamma} f\r)(x) &:= \l(\cI_{0+}^{-\alpha,-\beta,-\gamma} f\r)(x) %
= \l(\frac{\rd}{\rd x}\r)^{n} \l(\cI_{0+}^{-\alpha,-\beta,-\gamma+n} f\r)(x)\ \qquad \tand \label{eq_Saigodiffop1}\\
\l(\cD_{x-}^{\alpha,\beta,\gamma} f\r)(x) &:= \l(\cI_{x-}^{-\alpha,-\beta,-\gamma} f\r)(x) %
= \l(\frac{\rd}{\rd x}\r)^{n} \l(\cI_{x-}^{-\alpha,-\beta,-\gamma+n} f\r)(x). \label{eq_Saigodiffop2}
\end{flalign}
\end{subequations}
\end{defi}
When $\beta = -\alpha$, the above fractional operators~\eqref{eq_RLEKop1} to \eqref{eq_Saigodiffop2} include the classical Riemann--Liouville's in~\eqref{eq_RLop1}--\eqref{eq_diffop2}:
\[
\cI_{0+}^{\alpha,-\alpha,\gamma} = \cI_{0+}^{\alpha},\quad \cI_{x-}^{\alpha,-\alpha,\gamma} %
= \cI_{x-}^{\alpha}\ \qquad \tand \qquad \ \cD_{0+}^{\alpha,-\alpha,\gamma} %
= \cD_{0+}^{\alpha},\quad \cD_{x-}^{\alpha,-\alpha,\gamma} = \cD_{x-}^{\alpha}.\]

Here, $\cC$ is the class of analytic functions $f(z)$ in a simply-connected region of the $z$-plane containing the origin (for example the unit disk), of the order $f(z) = \cO\l(|z|^\varepsilon\r)$ ($|z|\to 0$), where $\varepsilon > \max\bl\{0,\beta-\eta\br\}$ and the multiplicity of $(x-t)^{\alpha-1}$ ($(t-x)^{\alpha-1}$, resp.) is removed by requiring $\ln(x-t)$ ($\ln(t-x)$, resp.) to be real when 
$x > t$ ($x < t$, resp.). (Exhaustive conditions of existence and convergence of the r.h.s. of Eqs.~\eqref{eq_RLEKop1} and \eqref{eq_RLEKop2} may be found, e.g., in~\cite{AskDaal10,Kirya94,Kirya06}.) Saigo's left- and right-sided fractional integrals are consistently employed to prove the results in Sections~\ref{lsMK} and \ref{rsMK}.\footnote{%
As shown e.g. by Samko {\em et al.} in~\cite[\color{cyan}Chap.~4, \S22]{SamKilMar93}, the fractional integration process can also be carried out by use of the formula for differentiating the Cauchy type integral to the (real or complex) order $\alpha$:
\[
\cD_z^\alpha(z) = \frac{\Gamma(1+\alpha)}{2\pi\ri} \int_{\cL} \frac{f(t)}{(t-z)^{1+\alpha}}\, \rd t.\]
}

\begin{rem} \label{rem4}
For suitable functions $f$ of a real variable and $\alpha\in \R_+$, there holds the fundamental property of operators of fractional integration, namely the additive index law (or semigroup property), according to which
\[
\cJ_{a+}^\alpha\, \cJ_{a+}^\beta = \cJ_{a+}^{\alpha+\beta}\ \qquad \tand \qquad %
\cJ_{b-}^\beta\, \cJ_{b-}^\alpha = \cJ_{b-}^{\alpha+\beta}\ \qquad (\alpha, \beta\ge 0),\]
where, for complementation we define $\cJ_{a+}^{0} = \cJ_{b-}^{0} := \I$ (Identity operator), i.e., 
$\cJ_{a+}^\alpha f(z) = \cJ_{b-}^\alpha f(z) = f(z)$.  
It can be shown however that the fractional differentiation operator $\cD^\alpha$ is neither commutative nor additive in general. Indeed, denoting by $\cD^n$ the operator of ordinary differentiation of order 
$n\in \N$, then $\cD^n \cJ^n = \I$, whereas $\cJ^n \cD^n\neq \I$; i.e., $\cD^n$ is left-inverse (but not right-inverse) to the corresponding integral operator $\cJ^n$. Furthermore, by $\cI_t^{\alpha} f(0+)$ we mean $\llim_{t\to 0+} \cI_t^{\alpha} f(t)$ (if the limit exists) and this limit may be infinite.
\end{rem}

\subsection{Saigo--Maeda's fractional differintegration operators} \label{saigomaeda}
A natural extension of Saigo's generalized fractional differintegral operators stated in Definition~\ref{def_saigo} (Eqs.~\eqref{eq_RLEKop1}--\eqref{eq_diffop2}) was introduced further. This extension involves Appell's two variable hypergeometric series $F_3$ in the kernel (a generalization of Gau{\ss}'s hypergeometric function of one variable). Appell's two variable $F_3$ is defined by the double series
\begin{equation} \label{appellF3}
F_3(\alpha, \alpha', \beta, \beta', \gamma ; x, y) := \lsum_{m,n\ge 0} %
\frac{(\alpha)_m (\alpha')_n (\beta)_m (\beta)_n}{(\gamma)_{m+n}}\, \frac{x^m}{m!} \frac{y^n}{n!}\ %
\qquad (\max\bl(|x|, |y|\br) < 1).
\end{equation}
Several generalized fractional differintegral operators involving Appell's two variable hypergeometric function $F_3$ in the kernel have been addressed in~\cite{GoKiMaRo14,SamKilMar93}. For a suitable class of function such as $\cC$, {\em Saigo--Maeda's (or Marichev--Saigo--Maeda's) fractional differintegral operators} are introduced therefrom.

\begin{defi} [Saigo--Maeda~{\cite[\color{cyan}p.~393, Eqs.~(4.12)--(4.13)]{SaiMae96}}] \label{def_saigomaeda} Let $\alpha,\, \alpha',\, \beta,\, \beta',\, \gamma\in \C$ and $\Re(\gamma) > 0$. Then, for $x\in \R_+$,
\begin{subequations}
\begin{flalign} 
\l(\cI_{0+}^{\alpha,\alpha',\beta,\beta',\gamma} f\r)\!(x) &:= \frac{x^{-\alpha}}{\Gamma(\gamma)}\, %
\lint_0^x (x-t)^{\gamma-1} t^{-\alpha'}\, F_3\l(\alpha,\alpha',\beta,\beta',\gamma; 1 - t/x, 1 - x/t\r) %
f(t)\rd t \label{eq_intsaigomaeda1}\\
\shortintertext{%
and
}
\l(\cI_{x-}^{\alpha,\alpha',\beta,\beta',\gamma} f\r)\!(x) &:= \frac{x^{-\alpha'}} %
{\Gamma(\gamma)}\, \lint_x^\infty (t-x)^{\gamma-1} t^{-\alpha}\, F_3\l(\alpha,\alpha',\beta,\beta',\gamma\, ; 1 - x/t, 1 - t/x\r) f(t)\rd t. \label{eq_intsaigomaeda2}
\end{flalign}
\end{subequations}
Similarly, set $n = \lfloor \Re(\gamma)\rfloor + 1$, then
\begin{subequations}
\begin{flalign} 
\l(\cD_{0+}^{\alpha,\alpha',\beta,\beta',\gamma} f\r)\!(x) &:= %
\l(\cI_{0+}^{-\alpha',-\alpha,-\beta',-\beta,-\gamma} f\r)\!(x) \nonumber\\
& = \l(\frac{\rd}{\rd x}\r)^{n}\l(\cI_{0+}^{-\alpha',-\alpha,-\beta'+n,-\beta,-\gamma+n} f\r)\!(x) \label{eq_derivsaigomaeda1}\ \qquad \tand\\
\l(\cD_{x-}^{\alpha,\alpha',\beta,\beta',\gamma} f\r)\!(x) &:= %
\l(\cI_{x-}^{-\alpha',-\alpha,-\beta',-\beta,-\gamma} f\r)\!(x) \nonumber\\
& = (-1)^n\, \l(\frac{\rd}{\rd x}\r)^{n} \l(\cI_{x-}^{-\alpha',-\alpha,-\beta',-\beta+n,-\gamma+n} f\r)\!(x). \label{eq_derivsaigomaeda2}
\end{flalign}
\end{subequations}
\end{defi}
As a direct consequence of the above definitions, the following lemma holds.

\begin{lem} [Saigo--Maeda~{\cite[\color{cyan}p.~394, Eqs.~(4.18)--(4.19)]{SaiMae96}}] \label{lem_powfct}
Let $\alpha,\, \alpha',\, \beta,\, \beta',\, \gamma,\, \rho\in \C$. For $f(x) := x^{\rho-1}$, we get the formulas\par
(i)\ If $\Re(\gamma) > 0$ and $\Re(\rho) > \max\bl(0,\Re(\alpha+\alpha'+\beta-\gamma), %
\Re(\alpha'-\beta')\br)$, then
\begin{subequations}
\begin{flalign}
\l(\cI_{0+}^{\alpha,\alpha',\beta,\beta',\gamma} x^{\rho-1}\r)\!(x) &= %
\frac{\Gamma(\rho) \Gamma(\rho+\gamma-\alpha-\alpha'-\beta) \Gamma(\rho-\alpha'+\beta')} %
{\Gamma(\rho+\beta') \Gamma(\rho+\gamma-\alpha-\alpha') \Gamma(\rho+\gamma-\alpha'-\beta) }\, %
x^{\rho-\alpha-\alpha'+\gamma-1}. \label{eq_powfct1}\\
\intertext{%
\kern.5cm (ii)\ If $\Re(\gamma) > 0$ and $\Re(\rho) < 1 + \min\bl(0,\Re(-\beta),\Re(\alpha+\alpha'-\gamma), \Re(\alpha+\beta'-\gamma)\br)$, then
}
\l(\cI_{x-}^{\alpha,\alpha',\beta,\beta',\gamma} x^{\rho-1}\r)\!(x) &= %
\frac{\Gamma(1+\alpha+\alpha'-\gamma-\rho) \Gamma(1+\alpha+\beta'-\gamma-\rho) \Gamma(1-\beta-\rho)} %
{\Gamma(1-\rho) \Gamma(1+\alpha+\alpha'+\beta'-\gamma-\rho) \Gamma(1+\alpha-\beta-\rho)}\, %
x^{\rho-\alpha-\alpha'+\gamma-1}. \label{eq_powfct2}
\end{flalign}
\end{subequations}
\end{lem}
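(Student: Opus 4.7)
The plan is to prove (i) by direct evaluation; part (ii) follows by entirely analogous manipulations and I will only indicate the changes at the end.

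\textbf{Step 1: Normalize the integral.} Substituting $f(t)=t^{\rho-1}$ into the defining formula \eqref{eq_intsaigomaeda1} and making the change of variable $t=xu$, $dt=x\,du$, the variable $x$ factors out cleanly. Since $1-t/x=1-u$ and $1-x/t=-(1-u)/u$, one obtains
\[
\l(\cI_{0+}^{\alpha,\alpha',\beta,\beta',\gamma}x^{\rho-1}\r)(x)
=\frac{x^{\rho-\alpha-\alpha'+\gamma-1}}{\Gamma(\gamma)}\lint_0^1 u^{\rho-\alpha'-1}(1-u)^{\gamma-1}\,F_3\!\l(\alpha,\alpha',\beta,\beta',\gamma;\,1-u,\,-\tfrac{1-u}{u}\r)\rd u,
\]
so the correct power of $x$, namely $x^{\rho-\alpha-\alpha'+\gamma-1}$, is already exposed.

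\textbf{Step 2: Series expansion and term-by-term integration.} I would expand the Appell function $F_3$ according to \eqref{appellF3}; the argument $-(1-u)/u$ lies outside the classical region of convergence, so I would proceed formally and justify the procedure a posteriori either by analytic continuation in $\rho$ or by invoking the Euler-type double-integral representation of $F_3$. Using $(-1)^n\l(\frac{1-u}{u}\r)^n$, the $(m,n)$-term contributes
\[
\frac{(\alpha)_m(\alpha')_n(\beta)_m(\beta')_n(-1)^n}{(\gamma)_{m+n}\,m!\,n!}\lint_0^1 u^{\rho-\alpha'-n-1}(1-u)^{\gamma+m+n-1}\rd u
=\frac{(\alpha)_m(\alpha')_n(\beta)_m(\beta')_n(-1)^n}{(\gamma)_{m+n}\,m!\,n!}\,\frac{\Gamma(\rho-\alpha'-n)\Gamma(\gamma+m+n)}{\Gamma(\rho-\alpha'+\gamma+m)}.
\]
The constraint $\Re(\rho)>\Re(\alpha'-\beta')$ (together with $\Re(\rho)>0$) ensures the beta integrals converge for all $n$ large enough, and $\Re(\gamma)>0$ handles the $(1-u)$-factor.

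\textbf{Step 3: Separation into two \boldmath$\ps{}{2}F_1$ series.} Using $\Gamma(\gamma+m+n)=\Gamma(\gamma)(\gamma)_{m+n}$ cancels the $(\gamma)_{m+n}$ denominator, and the reflection identity $\Gamma(a-n)=(-1)^n\Gamma(a)/(1-a)_n$ with $a=\rho-\alpha'$ turns the $(-1)^n\Gamma(\rho-\alpha'-n)$ into $\Gamma(\rho-\alpha')/(1-\rho+\alpha')_n$. After pulling $\Gamma(\rho-\alpha')/\Gamma(\rho-\alpha'+\gamma)$ to the front, the remaining double sum decouples as
\[
\ps{}{2}F_1\!\l(\alpha,\beta;\rho-\alpha'+\gamma;1\r)\cdot\ps{}{2}F_1\!\l(\alpha',\beta';1-\rho+\alpha';1\r).
\]

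\textbf{Step 4: Gauss summation and reassembly.} Apply Gauss's theorem $\ps{}{2}F_1(a,b;c;1)=\Gamma(c)\Gamma(c-a-b)/(\Gamma(c-a)\Gamma(c-b))$ to each factor; the hypotheses $\Re(\rho)>\Re(\alpha+\alpha'+\beta-\gamma)$ and $\Re(\rho)>\Re(\alpha'-\beta')$ are precisely what makes the Gauss sums converge and non-degenerate. Combining gamma factors gives
\[
x^{\rho-\alpha-\alpha'+\gamma-1}\cdot\frac{\Gamma(\rho-\alpha')\Gamma(\rho+\gamma-\alpha-\alpha'-\beta)\Gamma(1-\rho+\alpha')\Gamma(1-\rho-\beta')}{\Gamma(\rho+\gamma-\alpha-\alpha')\Gamma(\rho+\gamma-\alpha'-\beta)\Gamma(1-\rho)\Gamma(1-\rho+\alpha'-\beta')}.
\]
The main obstacle is then to show this equals the right-hand side of \eqref{eq_powfct1}. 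I would apply the reflection formula $\Gamma(z)\Gamma(1-z)=\pi/\sin(\pi z)$ to the four pairs
\[
\bigl(\rho-\alpha',\,1-\rho+\alpha'\bigr),\ \bigl(\rho,\,1-\rho\bigr),\ \bigl(\rho+\beta',\,1-\rho-\beta'\bigr),\ \bigl(\rho-\alpha'+\beta',\,1-\rho+\alpha'-\beta'\bigr).
\]
The product of the four sine factors that appear telescopes (the arguments differ pairwise by an integer multiple of $\pi$ modulo sign), yielding the clean ratio with numerator $\Gamma(\rho)\Gamma(\rho+\gamma-\alpha-\alpha'-\beta)\Gamma(\rho-\alpha'+\beta')$ and denominator $\Gamma(\rho+\beta')\Gamma(\rho+\gamma-\alpha-\alpha')\Gamma(\rho+\gamma-\alpha'-\beta)$.

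\textbf{Step 5: Right-sided operator.} For (ii), substitute $t=x/u$ in \eqref{eq_intsaigomaeda2}; the roles of the two arguments of $F_3$ swap, yielding $F_3(\alpha,\alpha',\beta,\beta',\gamma;1-u,-(1-u)/u)$ again. The power of $x$ now collects as $x^{\rho-\alpha-\alpha'+\gamma-1}$, and the same series/Gauss/reflection chain produces \eqref{eq_powfct2}; the upper-bound constraints on $\Re(\rho)$ replace the earlier lower bounds and are what is needed for absolute convergence of the integral at $+\infty$. The hardest bookkeeping step is again the final reflection-formula simplification, but it is mechanical once set up.
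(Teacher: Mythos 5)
The paper itself does not prove this lemma --- it is quoted verbatim from Saigo--Maeda --- so your attempt can only be judged on its own terms, and it contains a genuine gap that invalidates the final formula. The plan of Steps 2--4 (expand $F_3$, integrate term by term, apply Gauss twice) is exactly what works for Saigo's operator, where the single $\ps{}{2}F_1^{}$ argument $1-t/x$ stays in $[0,1]$; but for the Saigo--Maeda kernel the second argument $1-x/t=1-1/u$ tends to $-\infty$ as $u\to 0$, the double series does not converge on the integration range, and the beta integrals $\lint_0^1 u^{\rho-\alpha'-n-1}(1-u)^{\gamma+m+n-1}\,\rd u$ \emph{diverge} for every $n\ge \Re(\rho-\alpha')$ (your Step 2 asserts the opposite). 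Replacing them by their analytically continued values $\Gamma(\rho-\alpha'-n)\Gamma(\gamma+m+n)/\Gamma(\rho-\alpha'+\gamma+m)$ and resumming is a regularization, not an evaluation, and it gives the wrong answer: the Step 4 expression and the right-hand side of \eqref{eq_powfct1} differ by the factor $\sin(\pi\rho)\sin\bl(\pi(\rho-\alpha'+\beta')\br)\big/\bl[\sin\bl(\pi(\rho-\alpha')\br)\sin\bl(\pi(\rho+\beta')\br)\br]$, which equals $1$ only when $\sin(\pi\alpha')\sin(\pi\beta')=0$. So the claimed ``telescoping'' of the four sine factors is false; indeed your closed form carries $\Gamma(1-\rho+\alpha')\Gamma(1-\rho-\beta')$ in the numerator and hence has spurious poles at $\rho=\alpha'+1,\alpha'+2,\dots$, in a right half-plane where the integral is manifestly analytic in $\rho$. (Both expressions agree when $\alpha'=0$, which is why checking against the Saigo reduction \eqref{eq_redintsaigo} would not have detected the error.)

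To repair the proof you need a tool that bypasses the divergent expansion: either (a) insert the Mellin--Barnes double-contour representation of $F_3$ and evaluate by residues, which is essentially Saigo--Maeda's route; or (b) use the known factorization of $\cI_{0+}^{\alpha,\alpha',\beta,\beta',\gamma}$ into a composition of three Erd\'elyi--Kober/Riemann--Liouville fractional integrals, each of which sends $x^{\rho-1}$ to a two-gamma ratio times a power of $x$; multiplying the three ratios yields precisely the six gamma factors of \eqref{eq_powfct1}, and the three lower bounds on $\Re(\rho)$ in hypothesis (i) are exactly the existence conditions of the three factors. The same remarks apply verbatim to your Step 5 for the right-sided operator, where the divergence now occurs at $t\to+\infty$.
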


\begin{rems} \label{rems2}
Reducing Saigo--Maeda's fractional differintegral operators~\eqref{eq_intsaigomaeda1}--\eqref{eq_derivsaigomaeda2} to Saigo's~\eqref{eq_RLEKop1}--\eqref{eq_RLEKop2} is commonly done by means of the following relations, 
\begin{subequations}
\begin{flalign}
\cI_{0+}^{\alpha,0,\beta,\beta',\gamma} = \cI_{0+}^{\gamma,\alpha-\gamma,-\beta}\ \qquad %
& \tand\ \qquad \cI_{x-}^{\alpha,0,\beta,\beta',\gamma} %
= \cI_{x-}^{\gamma,\alpha-\gamma,-\beta}\ \qquad (\gamma\in \C). \label{eq_redintsaigo}\\
\shortintertext{%
and, similarly,
}
\cD_{0+}^{0,\alpha',\beta,\beta',\gamma} = \cD_{0+}^{\gamma,\alpha'-\gamma,\beta'-\gamma}\ \qquad %
& \tand\ \qquad \cD_{x-}^{0,\alpha',\beta,\beta',\gamma} %
= \cD_{x-}^{\gamma,\alpha'-\gamma,\beta'-\gamma}\ \qquad (\Re(\gamma) > 0).\label{eq_redderivsaigo}
\end{flalign}
\end{subequations}
Lemma~\ref{lem_powfct} and Eqs.~\eqref{eq_redintsaigo}--\eqref{eq_redderivsaigo} are useful and often required in fractional calculus.

The Appell function $F_3$ in two variables which appears in the definitions~\eqref{eq_intsaigomaeda1} and~\eqref{eq_intsaigomaeda2} satisfies a system of two linear partial differential equations of the second order and reduces to the Gauss hypergeometric function $\ps{}{2}F_1$ in the form
\[
F_3(\alpha,\gamma-\alpha,\beta,\gamma-\beta;\gamma;x,y) = \ps{}{2}F_1\l( \sbs{\alpha, \beta\\[.1cm] \gamma}\, ; x+y-xy\r).\]
Moreover, it is easily observed that
\begin{flalign*}
F_3(\alpha,0,\beta,\beta';\gamma;x,y) &= F_3(\alpha,\alpha',\beta,0;\gamma;x,y) %
= \ps{}{2}F_1\l( \sbs{\alpha, \beta\\[.1cm] \gamma}\, ; x\r)\ \qquad \tand\\
F_3(0,\alpha',\beta,\beta';\gamma;x,y) &= F_3(\alpha,\alpha',0,\beta';\gamma;x,y) %
= \ps{}{2}F_1\l( \sbs{\alpha', \beta'\\[.1cm] \gamma}\, ; x\r).
\end{flalign*}
\end{rems}
In view of the above reduction formula, the general operators reduce to the aforementioned Saigo's operators $\cI_{0+}^{\alpha,\beta,\gamma}$ and $\cI_{x-}^{\alpha,\beta,\gamma}$ defined in Definition~\ref{def_saigo}, Eqs.~\eqref{eq_RLEKop1} and~\eqref{eq_RLEKop2}.

\section{The generalized $M$-series and $K$-function} \label{M-K}
The {\em generalized $M$-series and the $K$-function} introduced in~\cite{Sharma08,Sharma12} both extend the Fox--Wright generalized hypergeometric function $\ps{}{p}\psi_q^{}(z)$ stated in Definition~\ref{def_foxwright} and of Prabakhar's three-parametric generalized M-L function defined in Eq.~\eqref{def_Prab}. Namely,
\begin{defi} \label{Mdef}
Let $z, \alpha, \beta\in \C$ and $\Re(\alpha) > 0$, then
\begin{equation} \label{def_M}
\ps{}{p}M_q^{\alpha,\beta}(z) = \ps{}{p}M_q^{\alpha,\beta}\bgl( \sbs{\bm{\rap}\\[.1cm] %
\bm{\rbq}}\, ; z\bgr) := \lsum_{n\ge 0} \frac{(a_1)_n\cdots (a_p)_n}{(b_1)_n\cdots (b_q)_n}\, %
\frac{z^n}{\Gamma(\alpha n + \beta)}\,.
\end{equation}
\end{defi}

\begin{defi} \label{Kdef}
Let $z, \alpha, \beta, \gamma\in \C$ and $\Re(\alpha) > 0$, then
\begin{equation} \label{def_K}
\ps{}{p}K_q^{\alpha,\beta;\gamma}(z) = %
\ps{}{p}K_q^{\alpha,\beta;\gamma}\bgl( \sbs{\bm{\rap}\\[.1cm] \bm{\rbq}}\, ; z\bgr) %
:= \lsum_{n\ge 0} \frac{(a_1)_n\cdots (a_p)_n}{(b_1)_n\cdots (b_q)_n}\, %
\frac{(\gamma)_n}{\Gamma(\alpha n + \beta)}\,\frac{z^n}{n!}\,.
\end{equation}
\end{defi}
As a matter of fact, the $K$-function stands for a three-parametric variant of the (two-parametric) $M$-series. Of course, if $\gamma = 1$, $\ps{}{p}K_q^{\alpha,\beta;1}(z)$ coincides with $\ps{}{p}M_q^{\alpha,\beta}(z)$.

Both series are defined provided that none of the bottom parameters $b_j$'s ($j = 1, 2,\ldots, q$) is a non-positive integer. If any of the top parameters $a_j$ is in $\Z_{\le 0}$, then the series terminate. From the ratio test, the series are convergent for all $z$ if $p\le q$, and they are divergent if 
$p > q + 1$. On the circle $|z| = 1$ and when $p = q + 1$, the conditions of convergence of the series depend on the parameters. More precisely, they are absolutely convergent when $\Re\l(\lsum_{j=1}^q b_j - \lsum_{i=1}^p a_i\r) > 0$, they are conditionally convergent when $-1 < \Re\l(\lsum_{j=1}^q  b_j - \lsum_{i=1}^p a_i\r)\le 0$ at $z = -1$ and they are divergent if $\Re\l(\lsum_{j=1}^q  b_j - \lsum_{i=1}^p a_i\r)\le -1$. The exhaustive proofs of convergence of both series in~\eqref{def_M}--\eqref{def_K} are deduced from the general theory of the Fox--Wright function (see, e.g., \cite[\color{cyan}\S16.2]{AskDaal10} and~\cite[\color{cyan}Thm.~1.5]{KiSrTr06}).

\subsection{Relations to the Fox--Wright and M-L type functions} \label{rel}
According to some well-chosen specific values of the parameters, the $M$-series and the $K$-function can be easily be reduced to classical generalized special functions. As a straighforward consequence of Definitions~\ref{def_foxwright}, \ref{Mdef} and \ref{Kdef}, the next two expressions involve the generalized Fox--Wright psi function, its normalized variant and the $H$-function. 

\begin{prop} \label{MKpsi}
Let $z, \alpha, \beta\in \C$ and $\Re(\alpha) > 0$, then
\begin{subequations}
\begin{flalign} 
\ps{}{p}M_q^{\alpha,\beta}(z) &= \frac{\Gamma(\bm{\rbq})}{\Gamma(\bm{\rap})}\, %
\lsum_{n=0}^\infty \frac{\lprod_{i=1}^p \Gamma(a_i + n)}{\lprod_{j=1}^q \Gamma(b_j + n)}\, %
\frac{z^n}{\Gamma(\alpha n+\beta)} = \frac{\Gamma(\bm{\rbq})}{\Gamma(\bm{\rap})}\ %
\ps{}{p+1}\psi_{q+1} \bgl( \sbs{ (a_i, 1)_1^p, (1,1)\\[.1cm] (b_j, 1)_1^q, (\beta,\alpha)}\, ; z\bgr)\nonumber\\
&= \ps{}{p+1}\psi_{q+1}^{*}\bgl( \sbs{ (a_i, 1)_1^p, (1,1)\\[.1cm] (b_j, 1)_1^q, (\beta,\alpha)}\, ; z\bgr) = \frac{\Gamma(\bm{\rbq})}{\Gamma(\bm{\rap})}\ %
H_{p+1,q+2}^{1,p+1} \l[-z\,\bgl|\, \sbs{(1 - \alpha_i, 1)_1^p, (0,1)\\[.1cm] %
(0,1), (1 - b_j, 1)_1^q, (1-\beta,\alpha)}\r] \label{Mpsi}
\shortintertext{%
and, similarly,
}
\ps{}{p}K_q^{\alpha,\beta;\gamma}(z) &= \frac{1}{\Gamma(\gamma)}\, %
\frac{\Gamma(\bm{\rbq})}{\Gamma(\bm{\rap})}\ \ps{}{p+2}\psi_{q+2} %
\l( \sbs{(a_i, 1)_1^p, (\gamma,1), (1,1)\\[.1cm] (b_j, 1)_1^q, (1,1), (\beta,\alpha)}\, ;z\r) %
= \frac{1}{\Gamma(\gamma)}\, \ps{}{p+2}\psi_{q+2}^{*}\l( \sbs{(a_i, 1)_1^p, (\gamma,1), (1,1)\\[.1cm] %
(b_j, 1)_1^q, (1,1), (\beta,\alpha)}\, ;z\r)\nonumber\\ 
&= \frac{1}{\Gamma(\gamma)}\, \frac{\Gamma(\bm{\rbq})}{\Gamma(\bm{\rap})}\ %
H_{p+2,q+3}^{1,p+2} \l[-z\,\bgl|\, \sbs{(1 - \alpha_i, 1)_1^p, (\gamma,1), (1,1))\\[.1cm] %
(0,1), (1 - \beta_j, 1)_1^q, (1,1),  (1-\beta,\alpha)}\r], \label{Kpsi}
\end{flalign}
\end{subequations}
provided that each member of the equations exists.
\end{prop}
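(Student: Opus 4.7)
The plan is to derive both identities by the same uniform procedure: rewrite every Pochhammer symbol appearing in Definitions~\ref{Mdef} and~\ref{Kdef} as a ratio of gamma functions via $(\lambda)_n = \Gamma(\lambda+n)/\Gamma(\lambda)$, factor out the $n$-independent gamma prefactor, and then recognize the resulting series as a Fox--Wright $\ps{}{p}\psi_q^{}$ in the form given by Definition~\ref{def_foxwright}. Once this form is reached, the remaining two equalities in each line of the proposition will follow mechanically from the normalization convention~\eqref{eq_psi*} and from the $H$-function representation~\eqref{eq_psiH}.

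For~\eqref{Mpsi}, I would start from Definition~\ref{Mdef}, replace $\lprod_{i}(a_i)_n$ and $\lprod_{j}(b_j)_n$ by their gamma ratios, and pull the factor $\Gamma(\bm{\rbq})/\Gamma(\bm{\rap})$ out of the sum. Because Definition~\ref{def_foxwright} requires $z^n/n!$ inside the summand whereas~\eqref{def_M} carries only $z^n/\Gamma(\alpha n+\beta)$, the needed move is to insert the tautological identity $1 = \Gamma(n+1)/n!$ under the summation sign. This inserts the numerator gamma $\Gamma(1+1\cdot n)$, which is encoded as the extra upper pair $(1,1)$ in the Fox--Wright signature, while $\Gamma(\alpha n+\beta)$ becomes the additional lower pair $(\beta,\alpha)$; the total count $(p+1,q+1)$ of upper and lower parameters then matches the $\ps{}{p+1}\psi_{q+1}^{}$ displayed in~\eqref{Mpsi}. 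Passing to the normalized symbol via~\eqref{eq_psi*} and then to the $H$-function via~\eqref{eq_psiH} yields the remaining two expressions in that line.

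For~\eqref{Kpsi}, the argument runs along exactly the same lines with two book-keeping differences: the $(\gamma)_n$ in the numerator of~\eqref{def_K} supplies both the global prefactor $1/\Gamma(\gamma)$ and an additional upper pair $(\gamma,1)$, while the $n!$ already present in~\eqref{def_K} is accommodated by inserting a cancelling pair $\Gamma(n+1)/\Gamma(n+1)$, which is what produces the symmetric pair of $(1,1)$ entries (one upper, one lower) appearing in the $\ps{}{p+2}\psi_{q+2}^{}$ form of~\eqref{Kpsi}. The normalized Fox--Wright and $H$-function identities then follow in the same manner from~\eqref{eq_psi*} and~\eqref{eq_psiH}. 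I do not anticipate any analytic obstacle: the whole derivation amounts to careful parameter matching, convergence being already granted by the discussion following Definitions~\ref{Mdef}--\ref{Kdef}; the only hidden hypothesis is that none of the gamma factors $\Gamma(\gamma)$, $\Gamma(a_i)$, $\Gamma(b_j)$ factored out lands on a non-positive integer, which is precisely the content of the caveat ``provided that each member of the equations exists.''
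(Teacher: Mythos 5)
Your approach is correct and is exactly what the paper has in mind: the paper gives no separate proof of Proposition~\ref{MKpsi}, presenting it as "a straightforward consequence" of Definitions~\ref{def_foxwright}, \ref{Mdef} and \ref{Kdef}, and your gamma-ratio rewriting of the Pochhammer symbols together with the insertion of $\Gamma(n+1)/n!$ is precisely that computation. One caveat: the passage to the normalized symbol does \emph{not} follow "mechanically" from~\eqref{eq_psi*} as you claim, since a literal application of that definition to the $(p+1,q+1)$ (resp.\ $(p+2,q+2)$) parameter arrays produces an extra factor $\Gamma(\beta)/\Gamma(1)$ (resp.\ $\Gamma(\beta)/\Gamma(\gamma)$) coming from the newly adjoined pairs $(1,1)$ and $(\beta,\alpha)$; the middle equalities in~\eqref{Mpsi}--\eqref{Kpsi} hold only if the normalizing prefactor is understood to run over the original $p$ upper and $q$ lower parameters, a notational looseness in the statement itself that your write-up should acknowledge rather than paper over.
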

The above Eqs.~\eqref{Mpsi} and~\eqref{Kpsi} exhibit the fact that the generalized $M$-series and $K$-function are in fact two obvious (rather trivial) variants of the generalized Fox-Wright psi function
$\ps{}{p}\psi_{q}$ and its normalized form $\ps{}{p}\psi_{q}^{*}$ defined in Eq.~\eqref{eq_psi*}. 

A natural unification and generalization of the Fox-Wright function $\ps{}{p}\psi_{q}^{*}$ defined by Eq.~\eqref{eq_psi*} anf of Lerch's transcendent zeta function $\Phi(z,s,a)$ defined in Note~\color{magenta}2\color{black}\ is indeed accomplished by introducing essentially arbitrary numbers of numerator and denominator parameters in the definition of $\Phi(z,s,\tau)$.

When there exists neither top nor bottom parameters in the definitions of the $M$-series and the $K$-function in Eqs.~\eqref{def_M} and~\eqref{def_K}, one obtains, respectively,
\begin{flalign} \label{eq_Mpsi} 
\ps{}{0}M_0^{\alpha,\beta}(-;-;z) &= \lsum_{n=0}^\infty \frac{z^n}{\Gamma(\alpha n+\beta)} %
=: E_{\alpha,\beta}(z)\ \qquad \tand\\
\ps{}{0}K_0^{\alpha,\beta;\gamma}(-;-;z) &= \lsum_{n=0}^\infty \frac{(\gamma)_n}{\Gamma(\alpha n+\beta)} %
\frac{z^n}{n!} = E_{\alpha,\beta}^\gamma(z) = \ps{}{1}M_1^{\alpha,\beta}(z). \label{eq_Kpsi}
\end{flalign}
The formulas coincide with the two-parametric M-L function in~\eqref{eq_Mpsi} and with  Prabhakar's generalized form in~\eqref{eq_Kpsi}, respectively. Now, if we let $\gamma = 1$ in formula~\eqref{eq_Kpsi}, the two-parametric M-L function may be rewritten in the form
\begin{equation} \label{M-L0}
\ps{}{0}K_0^{\alpha,\beta;1}(-;-; z) = E_{\alpha,\beta}^1(z) := E_{\alpha,\beta}(z) %
= \ps{}{0}M_0^{\alpha,\beta}(\gamma;1; z),
\end{equation} 
and setting $\beta = 1$ in~\eqref{M-L0}entails $E_{\alpha,1}(z) := E_{\alpha}(z)$. Finally,  putting 
$\alpha = 1$ in the above relation~\eqref{M-L0} yields $E_{1,1}(z) := E_1(z)$, which is also the exponential function $\re^z$~\cite{Rainv60}.

\section{Fractional calculus of the $M$-series and the $K$-function} \label{MKfc}
With the help of the left-sided (L-S for short) Riemann--Liouville fractional integral and derivative of order $\nu\in \C$, it can be shown first in the following Theorem~\ref{thm_fcMK}, that as naturally expected for fractional calculus operators of generalized hypergeometric special functions, the $M$-series $\ps{}{p}M_q^{\alpha,\beta}$ and the $K$-function $\ps{}{p}K_q^{\alpha,\beta;\gamma}$ write respectively in terms of $\ps{}{p+1}M_{q+1}^{\alpha,\beta}$ and $\ps{}{p+1}K_{q+1}^{\alpha,\beta;\gamma}$ up to prefactors.

\begin{thm} \label{thm_fcMK}
Let $z,\, \alpha,\, \beta,\, \gamma,\, \nu\in \C$ and $\Re(\alpha),\, \Re(\gamma),\, \Re(\nu) > 0$ and $a_i,\ b_j\in \C$ ($i = 1,\ldots, p$, $j = 1,\ldots, q$), then
\begin{flalign}
\cI_{0+}^\nu \l( \ps{}{p}M_q^{\alpha,\beta}\r)(z) &= \frac{z^\nu}{\Gamma(\nu+1)}\ %
\ps{}{p+1}M_{q+1}^{\alpha,\beta}\bgl(\sbs{\bm{\rap}, 1\\[.1cm] \bm{\rbq}, \nu+1}\, ; z\bgr)\ %
\qquad \tand\nonumber\\
\cI_{0+}^\nu \l( \ps{}{p}K_q^{\alpha,\beta;\gamma}\r)(z) &= \frac{z^\nu}{\Gamma(\nu+1)}\, %
\ps{}{p+1}K_{q+1}^{\alpha,\beta;\gamma}\bgl(\sbs{\bm{\rap}, 1\\[.1cm] \bm{\rbq}, \nu+1}\, ; z\bgr). \label{fcK1}\\
\shortintertext{%
Similarly,
}
\cD_{0+}^\nu \l( \ps{}{p}M_q^{\alpha,\beta}\r)(z) &= \frac{z^{-\nu}}{\Gamma(1-\nu)}\ %
\ps{}{p+1}M_{q+1}^{\alpha,\beta}\bgl(\sbs{\bm{\rap}, 1\\[.1cm] \bm{\rbq}, 1-\nu}\, ; z\bgr)\ %
\qquad \tand \nonumber\\
\cD_{0+}^\nu \l( \ps{}{p}K_q^{\alpha,\beta;\gamma}\r)(z) &= \frac{z^{-\nu}}{\Gamma(1-\nu)}\ %
\ps{}{p+1}K_{q+1}^{\alpha,\beta;\gamma}\bgl( \sbs{\bm{\rap}, 1\\[.1cm] \bm{\rbq}, 1-\nu}\, ; z\bgr). \label{fcK2}
\end{flalign}
\end{thm}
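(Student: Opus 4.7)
The plan is to prove all four identities of Theorem~\ref{thm_fcMK} by direct term-by-term action of the fractional operators on the defining power series~\eqref{def_M} and~\eqref{def_K}. The whole argument pivots on the two elementary monomial formulas
\[
\cI_{0+}^\nu(z^n) = \frac{\Gamma(n+1)}{\Gamma(n+\nu+1)}\, z^{n+\nu}\ \qquad \tand\ \qquad \cD_{0+}^\nu(z^n) = \frac{\Gamma(n+1)}{\Gamma(n-\nu+1)}\, z^{n-\nu},
\]
the second of which is the $\mu = n$ special case of~\eqref{diffex1}; the first follows either from~\eqref{eq_RLop1} via the substitution $t = xu$ (yielding a Beta integral) or, equivalently, from~\eqref{diffex1} with $\nu \mapsto -\nu$.

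First I would handle the left-sided integral of the $M$-series. Substituting Definition~\ref{Mdef} into $\cI_{0+}^\nu$ and interchanging the sum with the integral---legitimate on any compact subset of the domain of definition thanks to the Fox--Wright reduction~\eqref{Mpsi} together with Lemma~\ref{lem_foxwrightcvce}---produces
\[
\cI_{0+}^\nu \l( \ps{}{p}M_q^{\alpha,\beta}\r)(z) = \lsum_{n\ge 0} \frac{(a_1)_n\cdots (a_p)_n}{(b_1)_n\cdots (b_q)_n\, \Gamma(\alpha n+\beta)}\, \frac{\Gamma(n+1)}{\Gamma(n+\nu+1)}\, z^{n+\nu}.
\]
The remaining step is Pochhammer bookkeeping: write $\Gamma(n+1) = (1)_n$ and $\Gamma(n+\nu+1) = \Gamma(\nu+1)\,(\nu+1)_n$, pull the prefactor $z^\nu/\Gamma(\nu+1)$ out, and recognize the residual series as an instance of Definition~\ref{Mdef} with one extra top parameter $a_{p+1} = 1$ and one extra bottom parameter $b_{q+1} = \nu+1$. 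The matching identity for $\ps{}{p}K_q^{\alpha,\beta;\gamma}$ follows from the same computation, with the bonus that the $1/n!$ factor already present in Definition~\ref{Kdef} cancels the $\Gamma(n+1) = n!$ produced by the fractional integration, so that the answer re-packages as a $\ps{}{p+1}K_{q+1}^{\alpha,\beta;\gamma}$ with the same augmented parameter lists. The two derivative identities~\eqref{fcK2} then follow by the identical manipulation, substituting $\Gamma(n-\nu+1) = \Gamma(1-\nu)\,(1-\nu)_n$ in place of $\Gamma(n+\nu+1) = \Gamma(\nu+1)\,(\nu+1)_n$ throughout.

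The only nontrivial point, and the main technical obstacle I foresee, is the rigorous justification of the termwise action of the fractional operators. For $\cI_{0+}^\nu$ this reduces to a standard Fubini interchange of sum and integral against the locally integrable kernel $(x-t)^{\nu-1}$. For $\cD_{0+}^\nu = (\rd/\rd x)^{n_0} \circ \cI_{0+}^{n_0-\nu}$ with $n_0 = \lfloor \Re(\nu)\rfloor + 1$ one must additionally pass the $n_0$-fold ordinary derivative inside the series; this is legitimate because the Pochhammer ratio $\Gamma(n+1)/\Gamma(n-\nu+1)$ grows only polynomially in $n$, so that the formally differentiated series has the same region of absolute convergence as the original.
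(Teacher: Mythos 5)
Your proposal is correct and follows essentially the same route as the paper: termwise application of the operators to the power series, evaluation of the resulting Beta integral (equivalently, the monomial formulas~\eqref{diffex1}), and the Pochhammer rewriting $\Gamma(n+1)=(1)_n$, $\Gamma(n+\nu+1)=\Gamma(\nu+1)(\nu+1)_n$, $\Gamma(n-\nu+1)=\Gamma(1-\nu)(1-\nu)_n$ to absorb the new factors as an extra top parameter $1$ and bottom parameter $\nu+1$ (resp.\ $1-\nu$). Your remark on justifying the passage of the $n_0$-fold ordinary derivative through the series is in fact slightly more careful than the paper's own treatment of that step.
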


\begin{proof}
Consider the L-S fractional R--L operator $\cI_{0+}^\nu$ of the generalized $M$-series, defined in~\eqref{eq_RLop1}.
\[
\cI_{0+}^\nu \l(\ps{}{p}M_q^{\alpha,\beta}\r)(z) = \frac{1}{\Gamma(\nu)}\, \lint_0^z (z-t)^{\nu-1} %
\lsum_{k=0}^\infty \frac{(\bm{\rap})_k}{(\bm{\rbq})_k}\, \frac{t^k}{\Gamma(\alpha k + \beta)}\, \rd t.\]
The uniform convergence of the $M$-series follows, for example, from the convergence of the integral
involved. Thus, term by term integration yields
\begin{flalign}
\cI_{0+}^\nu \l(\ps{}{p}M_q^{\alpha,\beta}\r)(z) &= \frac{1}{\Gamma(\nu)}\, %
\lsum_{k=0}^\infty \frac{(\bm{\rap})_k}{(\bm{\rbq})_k}\, \frac{z^{\nu-1}}{\Gamma(\alpha k +\beta)}\, %
\lint_0^z (1-t/z)^{\nu-1} t^k\rd t. \nonumber\\
\intertext{%
Substituting the variable $u$ for $t/z$, by definition of $(x)_k = \Gamma(x+k)/\Gamma(x)$, we have
}
\cI_{0+}^\nu \l(\ps{}{p}M_q^{\alpha,\beta}\r)(z) &= \frac{1}{\Gamma(\nu)}\, %
\lsum_{k=0}^\infty \frac{(\bm{\rap})_k}{(\bm{\rbq})_k}\, \frac{z^{\nu+k}}{\Gamma(\alpha k + \beta)} %
\lint_0^1 (1-u)^{\nu-1} u^k\rd u\nonumber\\
&= \frac{z^\nu}{\Gamma(\nu)}\, \lsum_{k=0}^\infty \frac{(\bm{\rap})_k}{(\bm{\rbq})_k}\, %
\frac{(1)_k}{(\nu+1)_k} \frac{z^k}{\Gamma(\alpha k + \beta)}\ \qquad \tand \nonumber\\
\cI_{0+}^\nu \l(\ps{}{p}M_q^{\alpha,\beta}\r)(z) &= \frac{z^\nu}{\Gamma(\nu+1)}\, %
\ps{}{p+1}M_{q+1}^{\alpha,\beta} \bgl( \sbs{\bm{\rap}, 1\\[.1cm] \bm{\rbq}, \nu+1}\, ; z\bgr).
\end{flalign}
The proof for $\ps{}{p}K_q^{\alpha,\beta,\gamma}(z)$ proceeds exactly along the same lines.
In other words, a L-S fractional integral of an $M$-series (a $K$-function, resp.) can be rewritten as an $M$-series (a $K$-function, resp.), up to a prefactor, with the values of the indices $p$ and $q$ each incremented by one. This proves the first (integration) part of the theorem.

As for the derivation part of the theorem, consider likewise the L-S fractional R--L derivative operator $\cD_{0+}^\nu$ of the generalized $M$-series, defined in~\eqref{eq_diffop1}.
\[
\cD_{0+}^\nu \l(\ps{}{p}M_q^{\alpha,\beta}\r)(z) = \frac{1}{\Gamma(n-\nu)}\, \l(\frac{\rd}{\rd z}\r)^n %
\lint_0^z (z-t)^{n-\nu-1}\, \lsum_{k=0}^\infty \frac{(\bm{\rap})_k}{(\bm{\rbq})_k}\, %
\frac{t^k}{\Gamma(\alpha k + \beta)}\, \rd t,\]
where $n = \lfloor \Re(\nu)\rfloor + 1$. As above, term by term integration leads to
\begin{flalign*}
\cD_{0+}^\nu \l(\ps{}{p}M_q^{\alpha,\beta}\r)(z) &= \frac{1}{\Gamma(n-\nu)}\, %
\l(\frac{\rd}{\rd z}r\r)^n \lsum_{k=0}^\infty \frac{(\bm{\rap})_k}{(\bm{\rbq})_k}\, %
\frac{1}{\Gamma(\alpha k +\beta)} \lint_0^z (z-t)^{n-\nu-1} t^k\rd t\\
&= \frac{1}{\Gamma(n-\nu)}\, \l(\frac{\rd}{\rd z}\r)^n \lsum_{k=0}^\infty \frac{(\bm{\rap})_k} %
{(\bm{\rbq})_k}\, \frac{1}{\Gamma(\alpha k +\beta)}\, B(n-k,k-1)\\
&= \frac{1}{\Gamma(n-\nu)}\, \l(\frac{\rd}{\rd z}\r)^n \lsum_{k=0}^\infty \frac{(\bm{\rap})_k} %
{(\bm{\rbq})_k}\, \frac{z^{n-\nu+k}}{\Gamma(\alpha k +\beta)}\, %
\frac{\Gamma(n-\nu) \Gamma(k+1)}{\Gamma(n-\nu+k+1)}\,,
\end{flalign*}
where $k + 1 > 0$ and $n - \nu > 0$. Now, by the modified beta function formula
\[
\lint_a^b (t-a)^{\alpha-1} (b-t)^{\beta-1} = (b-a)^{\alpha+\beta-1}\, B(\alpha,\beta)\ %
\qquad (\Re(\alpha),\ \Re(\beta) > 0),\]
differentiating $n$ times the term $z^{n-\nu+k}$ yields, provided that $k + 1 > 0$\ and 
$(k - \nu + 1)_n\neq 0$,
\begin{flalign}
\cD_{0+}^\nu \l(\ps{}{p}M_q^{\alpha,\beta}\r)(z) &= z^{-\nu}\, \lsum_{k=0}^\infty %
\frac{(\bm{\rap})_k}{(\bm{\rbq})_k}\, \frac{z^k}{\Gamma(\alpha k +\beta)}\, %
\frac{\Gamma(k+1)}{\Gamma(k-\nu+1)}\ \qquad \tand\nonumber\\
\cD_{0+}^\nu \l(\ps{}{p}M_q^{\alpha,\beta}\r)(z) &= \frac{z^{-\nu}}{\Gamma(1-\nu)}\, %
\lsum_{k=0}^\infty \frac{(\bm{\rap})_k}{(\bm{\rbq})_k}\, \frac{z^k}{\Gamma(\alpha k +\beta)} %
= \frac{z^{-\nu}}{\Gamma(1-\nu)}\, \ps{}{p+1}M_{q+1}^{\alpha,\beta} %
\bgl( \sbs{\bm{\rap}, 1\\[.1cm] \bm{\rbq}, 1-\nu}\, ; z\bgr). \label{DM}
\end{flalign}
The L-S fractional integral and derivative of the $K$-function are achieved exactly along the same lines, since the pochhammer symbol $(\gamma)_k$ in the numerator of the summand $\frac{(\bm{\rap})_k}{(\bm{\rbq})_k}\, \frac{(\gamma)_k}{\Gamma(\alpha k + \beta)}\,\frac{z^k}{k!}$ is transported all along the proof steps.

Therefore, the L-S fractional derivative of an $M$-series (a $K$-function, resp.) is still an $M$-series (a $K$-function, resp.), up to a prefactor, with the value of the indices $p$ and $q$ each incremented by one. This proves the second (derivation) part of the theorem.
\end{proof}
More generally, the classes of Wright, Fox--Wright, Fox $H$- functions, etc. are all closed under Riemann--Liouville fractional integrals and derivatives, but involve a greater order of functions in each class (see Section~\ref{genfracint}).

All proofs in the following Subsections~\ref{lsMK} and~\ref{rsMK} are also very similar since, being merely parametric variants, the generalized $M$-series and the $K$-function share themselves quite similar features. Saigo's generalized fractional integral operators recalled in Section~\ref{genfracint} are basic means and tools for the purpose of integrating M-L type functions such as the $M$-series and the $K$-function in the complex plane. It must be pointed out also that the prefactors $t^{\sigma-1}$ and $t^{-\alpha-\sigma}$ which take place respectively in their left- and right-sided fractional integrands are required for proving the results. 

\subsection{Left-sided generalized fractional integrations} \label{lsMK}
In the present cases, a theorem and a corollary are deduced for the $M$-series in \S\ref{lsM} and the $K$-function in \S\ref{lsK}. By contrast with Theorem~\ref{thm_fcMK}, both proofs use Saigo's left-sided fractional integration formula~\eqref{eq_RLEKop1} applied to the $M$-series and the $K$-function as functions in $\cC$ (see~\cite{Kirya06,KiSaSa04,Saigo96,SaxSai05}). The results are due in part to Sharma~\cite{Sharma08}\footnote{%
Kishan Sharma actually considers in~\cite{Sharma08} the L-S and the R-S fractional integrals of a simpler $M$-series: more precisely, 
$I_{0+}^{\alpha,\beta,\gamma}\bgl(t^{\eta-1}\ps{}{p}M_q^{\xi,\eta}\l(ct^\xi\r)\bgr)(x)$ and 
$I_{-}^{\alpha,\beta,\gamma}\bgl(t^{-\alpha-\eta}\ps{}{p}M_q^{\xi,\eta}\l(ct^\xi\r)\bgr)(x)$. This leads to formulas which write in terms of a Fox--Wright function $\ps{}{p+2}\psi_{q+2}$ in the L-S case and of a Fox--Wright function $\ps{}{p+3}\psi_{q+3}$ in the R-S case, respectively . Same remarks about the $K$-function $\ps{}{p}K_q^{\xi,\eta}\l(ct^\xi\r)$ considered in~\cite{Sharma12}.
}
but was recently revisited for the $M$-series by Kumar and Saxena~\cite{KumSax15} by using Samko--Maeda's fractional operators.

\subsubsection{Left-sided fractional integration of the $M$-series} \label{lsM}
The generalized Fox--Wright psi functions which are obtained in this case naturally expresses as 
$\ps{}{p+3}\psi_{q+3}$ functions, up to prefactors.
\begin{thm} \label{thm_lsM}
Let\ $z > 0$ and suppose also that the parameters $\alpha, \beta, \gamma,\ \eta,\, \xi,\, \sigma\in \C$ are constrained by $\Re(\alpha) > 0$, $\Re(\xi) > 0$, $\mu > 0$, $c\in \R$ and $a_i,\, b_j\in \C$ 
($i = 1,\ldots, p$, $j = 1,\ldots, q$). 
Let\ $\cI_{0+}^{\alpha,\beta,\gamma}$ be Saigo's L-S generalized fractional integral, then the following formula holds true.
\begin{flalign} \label{eq_lsM} 
\cI_{0+}^{\alpha,\beta,\gamma}\bgl( t^{\sigma-1} \ps{}{p}M_q^{\xi,\eta}\bl(ct^\mu\br)\bgr)(z) &= %
\frac{\Gamma(\bm{\rbq})}{\Gamma(\bm{\rap})}\, z^{\sigma-\beta-1}\nonumber\\
& \times \ps{}{p+3}\psi_{q+3}\bgl( \sbs{(a_i, 1)_1^p, (\sigma,\mu), (-\beta+\gamma+\sigma, \mu), %
(1,1)\\[.1cm] (b_j, 1)_1^q, (-\beta + \sigma, \mu), (\alpha + \gamma + \sigma, \mu), (\eta, \xi)}\, ; %
cz^\mu\bgr),
\end{flalign} 
provided that each member of the equation exists.
\end{thm}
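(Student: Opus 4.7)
My plan is to reduce everything to a power-function calculation. The series
\[
t^{\sigma-1}\, \ps{}{p}M_q^{\xi,\eta}\!\bl(ct^\mu\br)
= \lsum_{n\ge 0} \frac{(\bm{\rap})_n}{(\bm{\rbq})_n}\, \frac{c^n}{\Gamma(\xi n + \eta)}\, t^{\sigma + \mu n - 1}
\]
converges uniformly on every compact subinterval of $(0,z]$ under the stated hypotheses (the growth comes from the generalized hypergeometric part, which is controlled by the conditions on $\bm{\rap}$, $\bm{\rbq}$ and by $\mu>0$, $\Re(\xi)>0$ recalled in Section~\ref{M-K}), so one may interchange summation with the Saigo integral.

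The single needed ingredient is then Saigo's power formula
\[
\cI_{0+}^{\alpha,\beta,\gamma}\bl(t^{\rho-1}\br)(z) = \frac{\Gamma(\rho)\,\Gamma(\rho-\beta+\gamma)}{\Gamma(\rho-\beta)\,\Gamma(\rho+\alpha+\gamma)}\, z^{\rho-\beta-1} \qquad (\Re(\rho)>\max\{0,\Re(\beta-\gamma)\}),
\]
which follows from Lemma~\ref{lem_powfct} via the reductions~\eqref{eq_redintsaigo}, or can be verified directly by combining the Euler integral for $\ps{}{2}F_1$ with the beta integral. Applying it with $\rho = \sigma + \mu n$ gives
\[
\cI_{0+}^{\alpha,\beta,\gamma}\bl(t^{\sigma+\mu n-1}\br)(z) = \frac{\Gamma(\sigma+\mu n)\,\Gamma(\sigma+\mu n-\beta+\gamma)}{\Gamma(\sigma+\mu n-\beta)\,\Gamma(\sigma+\mu n+\alpha+\gamma)}\, z^{\sigma+\mu n-\beta-1},
\]
so term-by-term integration produces
\[
\cI_{0+}^{\alpha,\beta,\gamma}\bl(t^{\sigma-1}\ps{}{p}M_q^{\xi,\eta}(ct^\mu)\br)(z)
= z^{\sigma-\beta-1}\lsum_{n\ge 0} \frac{(\bm{\rap})_n}{(\bm{\rbq})_n}\, \frac{\Gamma(\sigma+\mu n)\,\Gamma(\sigma-\beta+\gamma+\mu n)}{\Gamma(\sigma-\beta+\mu n)\,\Gamma(\sigma+\alpha+\gamma+\mu n)\,\Gamma(\eta+\xi n)}\, (cz^\mu)^n.
\]

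To recognize the right-hand side as a Fox--Wright $\ps{}{p+3}\psi_{q+3}$, rewrite each Pochhammer symbol as $(a_i)_n = \Gamma(a_i+n)/\Gamma(a_i)$ and $(b_j)_n = \Gamma(b_j+n)/\Gamma(b_j)$, which extracts the prefactor $\Gamma(\bm{\rbq})/\Gamma(\bm{\rap})$. Inserting an $n!/\Gamma(n+1)=1$ into the summand to create the pair $(1,1)$ in the numerator, the resulting series matches term for term the Fox--Wright expansion~\eqref{eq_foxwright} with upper parameters $(a_i,1)_1^p,(\sigma,\mu),(-\beta+\gamma+\sigma,\mu),(1,1)$ and lower parameters $(b_j,1)_1^q,(-\beta+\sigma,\mu),(\alpha+\gamma+\sigma,\mu),(\eta,\xi)$ evaluated at $cz^\mu$, which is exactly~\eqref{eq_lsM}.

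The only non-mechanical point is the interchange of sum and integral, which is the expected soft obstacle: one needs absolute convergence of the series over $(0,z]$, uniformly enough to invoke Fubini/Tonelli against the Saigo kernel $(x-t)^{\alpha-1}\,\ps{}{2}F_1(\alpha+\beta,-\gamma;\alpha;1-t/x)$. This is handled by noting that $\ps{}{2}F_1$ is bounded on $[0,1]$ for the parameter ranges considered, that $\Re(\alpha)>0$ makes $(x-t)^{\alpha-1}$ integrable at $t=x$, and that the condition $\Re(\sigma+\mu n)>\max\{0,\Re(\beta-\gamma)\}$ (which needs to hold for all $n\ge 0$, i.e.\ already at $n=0$ since $\mu>0$) is implicit in the requirement that each side of~\eqref{eq_lsM} exists. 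Once this justification is recorded, the rest is pure bookkeeping of gamma factors.
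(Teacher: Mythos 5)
Your proof is correct, and it reaches the paper's series \eqref{eq_lsM2} by a genuinely different (and more modular) route. The paper expands the Gauss kernel $\ps{}{2}F_1\l(\alpha+\beta,-\gamma;\alpha;1-t/z\r)$ into its defining series, interchanges summation and integration, substitutes $u=t/z$, and then evaluates the resulting inner integral with the beta function together with Gau{\ss}'s summation theorem, so that the double sum collapses to the single sum over $n$. You instead isolate the single analytic input, namely Saigo's power-function formula
\[
\cI_{0+}^{\alpha,\beta,\gamma}\bl(t^{\rho-1}\br)(z)=\frac{\Gamma(\rho)\,\Gamma(\rho-\beta+\gamma)}{\Gamma(\rho-\beta)\,\Gamma(\rho+\alpha+\gamma)}\,z^{\rho-\beta-1},
\]
which you correctly obtain from Lemma~\ref{lem_powfct}(i) via the reduction \eqref{eq_redintsaigo} (the check with $\alpha'=0$, $\gamma\mapsto\alpha$, $\alpha\mapsto\alpha+\beta$, $\beta\mapsto-\gamma$ works out), and apply it termwise with $\rho=\sigma+\mu n$. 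The bookkeeping that converts $(\bm{\rap})_n/(\bm{\rbq})_n$ into gamma ratios, extracts the prefactor $\Gamma(\bm{\rbq})/\Gamma(\bm{\rap})$, and inserts $\Gamma(n+1)/n!$ to create the pair $(1,1)$ reproduces exactly the summand of \eqref{eq_lsM2} and hence the stated $\ps{}{p+3}\psi_{q+3}$. What your approach buys is that the Gauss-summation step is quoted once, inside the power-function lemma, rather than redone inside the proof; what the paper's approach buys is self-containedness, at the cost of a double sum that must be reorganized. Both arguments rest on the same soft point, the interchange of sum and integral, and your justification of it (boundedness of $\ps{}{2}F_1$ on $[0,1]$, integrability of $(z-t)^{\alpha-1}$ for $\Re(\alpha)>0$, and the existence condition $\Re(\sigma)>\max\{0,\Re(\beta-\gamma)\}$ being the binding case $n=0$) is at least as explicit as the paper's one-line appeal to absolute and uniform convergence. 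One incidental remark: the paper's intermediate display \eqref{eq_lsM2} carries the prefactor $z^{-\sigma-\beta-1}$, which is a typographical slip for $z^{\sigma-\beta-1}$; your version has the correct exponent throughout.
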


\begin{proof}
By Definition~\ref{def_saigo} of Saigo's L-S fractional integration operator $\cI_{0+}^{\alpha,\beta,\gamma}$ applied to the $M$-function in the class $\cC$, Eq.~\eqref{eq_RLEKop1} rewrites as follows,
\begin{flalign} \label{eq_lsM1}
\cI_{0+}^{\alpha,\beta,\gamma}\bgl(t^{\sigma-1}\ps{}{p}M_q^{\xi,\eta}\bl(ct^\mu\br)\bgr)(z) &=\nonumber\\
\frac{z^{-\alpha-\beta}}{\Gamma(\alpha)}\, & \lint_0^z (z-t)^{\alpha-1}\, %
\ps{}{2}F_1\l( \sbs{\alpha+\beta, -\gamma\\[.1cm] \alpha} ; 1 - t/z\r)\, t^{\sigma-1} %
\ps{}{p}M_q^{\xi,\eta}\bl(c t^\mu\br)\,\rd t.\\
\shortintertext{%
Now, since
}
\ps{}{2}F_1\l( \sbs{\alpha+\beta,-\gamma\\[.1cm] \alpha} ; 1 - t/z\r) &:= \lsum_{n=0}^\infty %
\frac{(\alpha+\beta)_n (-\gamma)_n}{(\alpha)_n}\, \frac{(1 - t/z)^n}{n!}\nonumber\\ 
&= \lsum_{n=0}^\infty \frac{\Gamma(\alpha+\beta+n) \Gamma(-\gamma+n) \Gamma(\alpha)} %
{\Gamma(\alpha+\beta) \Gamma(-\gamma) \Gamma(\alpha+n)}\,\frac{(1 - t/z)^n}{n!}\,,\label{eq_2F1}
\end{flalign}
we may plug Eq.~\eqref{eq_2F1} into Eq.~\eqref{eq_lsM1}. Under the constaints of the theorem, interchanging the order of integration and summation within the integrand can be justified by the absolute convergence of the integral and the uniform convergence of the series involved. After substituting the variable $u$ for $t/z$ in the integral, a few simplifications lead to
\begin{multline} \label{eq_lsM3} 
\cI_{0+}^{\alpha,\beta,\gamma} \bgl(t^{\sigma-1} \ps{}{p}M_q^{\xi,\eta}\bl(ct^\mu\br)\bgr)(z) = %
z^{\sigma-\beta-1}\,\\
\times\, \lsum_{n=0}^\infty \frac{\Gamma(\alpha+\beta+n) \Gamma(-\gamma+n)}{\Gamma(\alpha+n) %
\Gamma(n+1)}\, \lint_0^1 (1-u)^{\alpha-1+n}\, u^{\sigma-1}\, %
\ps{}{p}M_q^{\xi,\eta}\bl(c (zu)^{\mu}\br)\, \rd t,
\end{multline}
where, by Definition~\ref{Mdef},
\[
\ps{}{p}M_q^{\xi,\eta}\bl(c (zu)^\mu\br) := \frac{\Gamma(\bm{\rbq})}{\Gamma(\bm{\rap})}\ %
\lsum_{n=0}^\infty \frac{\Gamma(\bm{\rap} + n)}{\Gamma(\bm{\rbq} + n)}\ %
\frac{u^{\mu n} \l(c z^{\mu}\r)^n}{\Gamma(\xi n + \eta)}\,.\]
Next, using the beta function and Gau{\ss}'s summation theorem, Eq.~\eqref{eq_lsM3} simplifies and may be reexpanded into the final expression
\begin{flalign} \label{eq_lsM2}
\cI_{0+}^{\alpha,\beta,\gamma} \bgl(t^{\sigma-1} \ps{}{p}M_q^{\xi,\eta}\bl(ct^\mu\br)\bgr)(z) &= %
\frac{\Gamma(\bm{\rbq})}{\Gamma(\bm{\rap})}\, z^{-\sigma-\beta-1}\nonumber\\
\times \lsum_{n=0}^\infty \frac{\Gamma(\bm{\rap} + n)}{\Gamma(\bm{\rbq} + n)}\, & %
\frac{\Gamma(\sigma + \mu n) \Gamma(-\beta + \gamma + \sigma + \mu n) \Gamma(n+1)} %
{\Gamma(\alpha + \gamma + \sigma + \mu n) \Gamma(-\beta + \sigma + \mu n) \Gamma(\eta + \xi n)}\, %
\frac{\bl(cz^\mu\br)^n}{n!}\,,
\end{flalign}
Finally, by interpreting the r.h.s. in~\eqref{eq_lsM2} by means of Eq.~\eqref{eq_foxwright} in Definition~\ref{def_foxwright}, we can write this last summation in terms of the Fox--Wright function $\ps{}{p+3}\psi_{q+3}{}$ in Eq.~\eqref{eq_lsM}, and the theorem follows.
\end{proof} 

\subsubsection{Left-sided fractional integration of the $K$-function} \label{lsK}
The following result makes also use of Saigo's L-S fractional integration operator.
\begin{cor} \label{cor_lsK}
Let\ $z > 0$ and suppose also that the parameters $\alpha, \beta, \gamma,\ \eta,\, \nu, \xi,\, \sigma\in \C$ are constrained by $\Re(\alpha) > 0$, $\Re(\nu) > 0$, $\Re(\xi) > 0$, $\mu > 0$, $c\in \R$ 
and $a_i,\, b_j\in \C$ ($i = 1,\ldots, p$, $j = 1,\ldots, q$). By Definition~\ref{def_saigo}, Saigo's L-S fractional integration operator $\cI_{0+}^{\alpha,\beta,\gamma}$ yields
\begin{flalign} \label{eq_lsK}
\cI_{0+}^{\alpha,\beta,\gamma} \bgl(t^{\sigma-1} \ps{}{p}K_q^{\xi,\eta;\nu}\bl(ct^\mu\br)\bgr)(z) &= %
\frac{\Gamma(\bm{\rbq})}{\Gamma(\bm{\rap})}\, \frac{z^{\sigma-\beta-1}}{\Gamma(\nu)}\nonumber\\
\times &\, \ps{}{p+3}\psi_{q+3}\bgl(\sbs{ (a_i, 1)_1^p, (\sigma, \mu), (-\beta+\gamma + \sigma, \mu), %
(\nu, 1)\\[.1cm] (b_j, 1)_1^q, (-\beta+\sigma, \mu), (\alpha+\gamma + \sigma, \mu), (\eta, \xi)}\, ; %
cz^\mu\bgr), 
\end{flalign} 
provided each member of the equation exists.
\end{cor}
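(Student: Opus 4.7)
The plan is to imitate the argument used for Theorem~\ref{thm_lsM}, tracking the extra Pochhammer factor $(\nu)_n$ that distinguishes $\ps{}{p}K_q^{\xi,\eta;\nu}$ from $\ps{}{p}M_q^{\xi,\eta}$. First I would unfold the definition of Saigo's left-sided operator~\eqref{eq_RLEKop1}, substitute the series expansion~\eqref{def_K} of $\ps{}{p}K_q^{\xi,\eta;\nu}(c t^\mu)$, and justify the interchange of integration and summation by the absolute convergence of the integral together with the uniform convergence of the $K$-series on the compact interval $[0,z]$ (this is legitimate under the hypotheses $\Re(\alpha)>0$, $\Re(\xi)>0$, $\Re(\nu)>0$, exactly as for the $M$-series).

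Next I would expand the $\ps{}{2}F_1$ kernel via~\eqref{eq_2F1} and change variables $u=t/z$, reducing the problem to computing, for each $n\geq 0$, an integral of the form
\[
\lint_0^1 (1-u)^{\alpha-1+k} u^{\sigma-1+\mu n}\,\rd u
\]
(after interchanging the $n$-summation from the $\ps{}{2}F_1$ with the $k$-summation from the $K$-series and applying Gau{\ss}'s summation theorem to collapse the inner series, just as in the proof of Theorem~\ref{thm_lsM}). The beta integral together with Gau{\ss}'s theorem then delivers the same ratio of Gamma functions
\[
\frac{\Gamma(\sigma+\mu n)\,\Gamma(-\beta+\gamma+\sigma+\mu n)}{\Gamma(-\beta+\sigma+\mu n)\,\Gamma(\alpha+\gamma+\sigma+\mu n)}
\]
that already appears in~\eqref{eq_lsM2}, multiplied by the overall factor $z^{\sigma-\beta-1}$.

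The only genuinely new bookkeeping is the treatment of the $(\nu)_n$ factor from the $K$-function. Writing $(\nu)_n = \Gamma(\nu+n)/\Gamma(\nu)$ pulls a $1/\Gamma(\nu)$ prefactor out of the entire expression and leaves $\Gamma(\nu+n)$ inside the summand; this is precisely the change that converts the top parameter $(1,1)$ in Theorem~\ref{thm_lsM} into $(\nu,1)$ here. Finally, I would reinterpret the resulting series
\[
\lsum_{n\geq 0} \frac{\lprod_i \Gamma(a_i+n)\,\Gamma(\sigma+\mu n)\,\Gamma(-\beta+\gamma+\sigma+\mu n)\,\Gamma(\nu+n)}{\lprod_j \Gamma(b_j+n)\,\Gamma(-\beta+\sigma+\mu n)\,\Gamma(\alpha+\gamma+\sigma+\mu n)\,\Gamma(\eta+\xi n)}\,\frac{(cz^\mu)^n}{n!}
\]
through Definition~\ref{def_foxwright} as the claimed $\ps{}{p+3}\psi_{q+3}$, yielding~\eqref{eq_lsK}.

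There is no real obstacle beyond careful parameter bookkeeping: the convergence arguments and the term-by-term integration are inherited verbatim from the proof of Theorem~\ref{thm_lsM}. The main point of attention is to verify that the hypothesis $\Re(\nu)>0$ (added compared with Theorem~\ref{thm_lsM}) is exactly what is needed to write $(\nu)_n = \Gamma(\nu+n)/\Gamma(\nu)$ unambiguously and to guarantee that the extra $\Gamma(\nu+n)$ factor is compatible with the convergence conditions for $\ps{}{p+3}\psi_{q+3}$ supplied by Lemma~\ref{lem_foxwrightcvce}. One may also observe, as a consistency check, that specializing $\nu=1$ collapses $(\nu,1)$ into $(1,1)$ and recovers Theorem~\ref{thm_lsM} in view of the identity $\ps{}{p}K_q^{\xi,\eta;1}=\ps{}{p}M_q^{\xi,\eta}$ noted after Definition~\ref{Kdef}.
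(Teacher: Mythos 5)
Your proposal is correct and follows essentially the same route as the paper's own proof: expand the $\ps{}{2}F_1$ kernel, interchange integration and summation, substitute $u=t/z$, evaluate via the beta integral and Gau{\ss}'s summation theorem, and absorb $(\nu)_n=\Gamma(\nu+n)/\Gamma(\nu)$ into a $1/\Gamma(\nu)$ prefactor plus a $(\nu,1)$ parameter pair before reading off the $\ps{}{p+3}\psi_{q+3}$ from Definition~\ref{def_foxwright}. Your explicit tracking of the $(\nu)_n$ factor and the $\nu=1$ consistency check are slightly more detailed than the paper's (which simply declares the argument "along the same lines" as Theorem~\ref{thm_lsM}), but the substance is identical.
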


\begin{proof}
Along the same lines as in Theorem~\ref{thm_lsM} and in view of Definition~\ref{def_saigo}, Eq.~\eqref{eq_RLEKop1} applied to the $K$-function in the class $\cC$ writes now
\begin{flalign} \label{eq_lsK1}
\cI_{0+}^{\alpha,\beta,\gamma} \bgl(t^{\sigma-1} \ps{}{p}K_q^{\xi,\eta;\nu}\bl(ct^\mu\br)\bgr)(z) &= %
\frac{z^{\sigma-\beta-1}}{\Gamma(\alpha)}\nonumber\\
\lint_0^z & (z-t)^{\alpha-1}\, \ps{}{2}F_1\l( \sbs{\alpha+\beta, -\gamma\\[.1cm] \alpha}\, ; 1 - t/z\r)\, t^{\sigma-1} \ps{}{p}K_q^{\xi,\eta;\nu}\bl(c t^\mu\br)\,\rd t.
\end{flalign}
Upon interchanging the order of integration and summation (which is guaranteed by the absolute convergence of the integral and the uniform convergence of the series), the integrand can be evaluated after the substitution of variables $u = t/z$. By means of the beta function and Gau{\ss}'s summation theorem, formula~\eqref{eq_lsK1} then coincides with an infinite summation similar to Eq.~\eqref{eq_lsM2} (up to the parameter $\nu$): 
\begin{flalign} \label{eq_lsK2}
\cI_{0+}^{\alpha,\beta,\gamma} \bgl(t^{\sigma-1} \ps{}{p}K_q^{\xi,\eta;\nu}\bl(ct^\mu\br)\bgr)(z) &= %
\frac{\Gamma(\bm{\rap})}{\Gamma(\bm{\rbq})}\, \frac{z^{\eta-\beta-1}}{\Gamma(\nu)}\nonumber\\
\times \lsum_{n=0}^\infty & \frac{\Gamma(\bm{\rap} + n)}{\Gamma(\bm{\rbq} + n)}\, %
\frac{\Gamma(\sigma + \mu n) \Gamma(-\beta + \gamma + \sigma + \mu n) \Gamma(\nu + n)} %
{\Gamma(-\beta + \sigma + \mu n) \Gamma(\alpha + \gamma + \sigma + \mu n) \Gamma(\eta + \xi n)}\, %
\frac{\bl(cz^\mu\br)^n}{n!}\,.
\end{flalign} 
By Eq.~\eqref{eq_foxwright} in Definition~\ref{def_foxwright}, Corollary~\ref{cor_lsK} runs parallel to 
Theorem~\ref{thm_lsM} and Eq.~\eqref{eq_lsK2} may be expressed in terms of the Fox--Wright function $\ps{}{p+3}\psi_{q+3}$ in Eq.~\eqref{eq_lsM}. Therefore, the corollary is completed. 
\end{proof}
Note that if we set $\nu = \eta = 1$, in Eq.~\eqref{eq_lsK}, then we get the L-S fractional integral of the $M$-series $\ps{}{p}M_q^{\xi,1}\l(ct^{\mu}\r)$.

\subsection{Right-sided generalized fractional integrations} \label{rsMK}
The proofs are along the same lines as in the L-S case in Section~\ref{lsMK}. As basic tool, the Definition~\ref{def_saigo} of Saigo's right-sided (R-S for short) fractional integration formula~\eqref{eq_RLEKop2} is applied to the $M$-series and the $K$-function as functions in $\cC$ (see also~\cite{Kirya06,KiSaSa04,Saigo96,SaxSai05}). This provides the following two results in \S\S\ref{rsM}--\ref{rsK}.

\subsubsection{Right-sided fractional integration of the $M$ series} \label{rsM}
This theorem is the R-S counterpart of the L-S operator of fractional integration carried out in~Theorem~\ref{thm_lsM}.
\begin{thm} \label{thm_rsM}
Let\ $z > 0$ and suppose also that the parameters $\alpha, \beta, \gamma,\ \eta,\, \xi,\, \sigma\in \C$ are constrained by $\Re(\alpha) > 0$, $\Re(\xi) > 0$, $\mu > 0$, $c\in \R$ and $a_i,\, b_j\in \C$ 
($i = 1,\ldots, p$, $j = 1,\ldots, q$). Let $\cI_{z-}^{\alpha,\beta,\gamma}$ be the R-S operator of generalized fractional integration. Then, the following formula holds true.
\begin{flalign} \label{eq_rsM}
\cI_{z-}^{\alpha,\beta,\gamma} \bgl(t^{-\alpha-\sigma}\ps{}{p}M_q^{\xi,\eta}\bl(ct^{-\mu}\br)\bgr)(z) &= \frac{\Gamma(\bm{\rbq})}{\Gamma(\bm{\rap})}\, z^{-2\alpha-2\beta-\sigma}\nonumber\\
& \times\, \ps{}{p+3}\psi_{q+3}\bgl( \sbs{ (a_i, 1)_1^p, (\alpha + \beta + \sigma, \mu), %
(\alpha + \gamma + \sigma , \mu), (1,1)\\[.1cm] (b_j, 1)_1^q, (\eta, \xi), %
(2\alpha + \beta + \gamma + \sigma, \mu), (\alpha + \sigma, \mu)}\, ; cz^{-\mu}\bgr), 
\end{flalign}
provided that each member of the equation exists.
\end{thm}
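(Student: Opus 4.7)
The plan is to mirror the proof of Theorem~\ref{thm_lsM}, but using Saigo's right-sided formula~\eqref{eq_RLEKop2} instead of~\eqref{eq_RLEKop1}. Starting from
\[
\cI_{z-}^{\alpha,\beta,\gamma}\bgl(t^{-\alpha-\sigma}\ps{}{p}M_q^{\xi,\eta}(ct^{-\mu})\bgr)(z)
= \frac{z^{-\alpha-\beta}}{\Gamma(\alpha)} \lint_z^\infty (t-z)^{\alpha-1}\, t^{-2\alpha-\beta-\sigma}\,
\ps{}{2}F_1\bgl(\sbs{\alpha+\beta,\,-\gamma\\[.1cm] \alpha};\, 1 - z/t\bgr)
\ps{}{p}M_q^{\xi,\eta}(ct^{-\mu})\,\rd t,
\]
I first expand the Gauss hypergeometric kernel as a power series in $(1-z/t)$ and the $M$-series as a power series in $ct^{-\mu}$ using Definition~\ref{Mdef}, then interchange the two summations with the integral. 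As in Theorem~\ref{thm_lsM}, this interchange is justified by absolute convergence of the improper integral together with uniform convergence of both series on compact subsets of $(z,\infty)$.

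Next I would perform the change of variable $u := z/t$, which maps $[z,\infty)$ onto $(0,1]$ and has the convenient effect of transforming every $t$-dependent factor into a power of $u$ times a power of $z$. A routine bookkeeping of exponents yields the prefactor $z^{-2\alpha-2\beta-\sigma}$ (matching the statement) and reduces the integral for fixed summation indices $n,k$ to a beta integral
\[
\lint_0^1 u^{\alpha+\beta+\sigma+\mu k -1}(1-u)^{\alpha+n-1}\,\rd u
= \frac{\Gamma(\alpha+\beta+\sigma+\mu k)\,\Gamma(\alpha+n)}{\Gamma(2\alpha+\beta+\sigma+\mu k+n)}.
\]

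The main obstacle, and the step where I expect the actual work to lie, is that after the beta evaluation there are \emph{two} summations (over $n$ from the $\ps{}{2}F_1$ kernel and over $k$ from the $M$-series), with the index $n$ appearing both in the numerator $\Gamma(\alpha+n)$ and in the denominator $\Gamma(2\alpha+\beta+\sigma+\mu k+n)$. I would collapse the inner $n$-summation to a $\ps{}{2}F_1$ at unit argument and apply the Gau{\ss} summation theorem
\[
\ps{}{2}F_1\bgl(\sbs{\alpha+\beta,\,-\gamma\\[.1cm] 2\alpha+\beta+\sigma+\mu k};\,1\bgr)
= \frac{\Gamma(2\alpha+\beta+\sigma+\mu k)\,\Gamma(\alpha+\gamma+\sigma+\mu k)}
{\Gamma(\alpha+\sigma+\mu k)\,\Gamma(2\alpha+\beta+\gamma+\sigma+\mu k)},
\]
which cleanly cancels the $\Gamma(2\alpha+\beta+\sigma+\mu k)$ and leaves a single summation over $k$.

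Finally, I would absorb the Pochhammer symbols $(\bm{\rap})_k/(\bm{\rbq})_k = [\Gamma(\bm{\rap}+k)/\Gamma(\bm{\rap})]\,[\Gamma(\bm{\rbq})/\Gamma(\bm{\rbq}+k)]$ to produce the overall factor $\Gamma(\bm{\rbq})/\Gamma(\bm{\rap})$ and recognize the remaining sum, upon inserting the trivial factor $\Gamma(1+k)/k! \equiv 1$, as exactly the Fox--Wright series~\eqref{eq_foxwright} with $p+3$ upper pairs $(a_i,1)_1^p$, $(\alpha+\beta+\sigma,\mu)$, $(\alpha+\gamma+\sigma,\mu)$, $(1,1)$ and $q+3$ lower pairs $(b_j,1)_1^q$, $(\eta,\xi)$, $(2\alpha+\beta+\gamma+\sigma,\mu)$, $(\alpha+\sigma,\mu)$, evaluated at $cz^{-\mu}$. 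This is precisely~\eqref{eq_rsM}. The subsidiary convergence conditions needed for the beta integral (positive real parts of $\alpha+\beta+\sigma+\mu k$ and $\alpha+n$) and for the Gau{\ss} summation (positive real part of $\alpha+\gamma+\sigma+\mu k$) are implicit in the hypothesis that both sides of~\eqref{eq_rsM} exist, and do not require separate treatment.
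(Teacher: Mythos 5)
Your proposal is correct and follows essentially the same route as the paper's own proof: expand the $\ps{}{2}F_1$ kernel and the $M$-series, interchange with the integral, substitute $u=z/t$, evaluate the resulting beta integral, collapse the inner sum via Gau{\ss}'s summation theorem, and read off the $\ps{}{p+3}\psi_{q+3}$ series. If anything, your bookkeeping is more careful than the paper's (its intermediate display has sign slips in the $u$-exponent and drops the $z^{-\alpha-\beta}$ prefactor, though its final formula agrees with yours), and your explicit Gau{\ss}-summation step is exactly the cancellation the paper invokes implicitly.
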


\begin{proof}
By Definition~\ref{def_saigo} of Saigo's R-S fractional integration operator $\cI_{z-}^{\alpha,\beta,\gamma}$, Eq.~\eqref{eq_RLEKop1} applied to the $M$-function in the class $\cC$ satisfies
\begin{multline} \label{eq_rsM1} 
\cI_{z-}^{\alpha,\beta,\gamma} \bgl(t^{-\alpha-\sigma} \ps{}{p}M_q^{\xi,\eta}\bl(ct^{-\mu}\br)\bgr)(z) =\\
\frac{1}{\Gamma(\alpha)}\, \lint_z^{\infty} (t-z)^{\alpha-1}\, t^{-\alpha-\beta} %
\ps{}{2}F_1\bl( \sbs{\alpha+\beta,-\gamma\\ \alpha}\, ; 1 - z/t\br)\, %
t^{-\alpha-\sigma}\, \ps{}{p}M_q^{\xi,\eta}\l(ct^{-\mu}\r)\, \rd t.
\end{multline}
The proof now follows the same lines as the proofs in~Theorem~\ref{thm_lsM} and Corollary~\ref{cor_lsK}. Interchanging the order of integration and summation within the integrand in~\eqref{eq_rsM1} is justified by absolute and uniform convergence of the integral and the series, respectively. After substituting the variable $u$ for $z/t$ and simplification, the R-S generalized fractional integral may be rewritten
\begin{multline} \label{eq_rsM2} 
\cI_{z-}^{\alpha,\beta,\gamma} \bgl(t^{-\alpha-\sigma} %
\ps{}{p}M_q^{\xi,\eta}\bl(ct^{-\mu}\br)\bgr)(z) = \frac{z^{-\alpha-\beta-\sigma}} %
{\Gamma(\alpha+\beta) \Gamma(-\gamma)}\\
\times\, \lsum_{n=0}^\infty \frac{\Gamma(\alpha+\beta + n) \Gamma(-\gamma + n)}{\Gamma(\alpha + n) %
\Gamma(n+1)}\, \lint_0^1 u^{\alpha - \beta - \sigma - 1} (1-u)^{\alpha + n - 1}\, %
\ps{}{p}M_q^{\xi,\eta}\bl(c z^{-\mu} u^{\mu}\br)\, \rd u.
\end{multline} 
Next, the inner integral in the above Eq.~\eqref{eq_rsM2} may be evaluated by means the beta function and Gau{\ss}'s summation theorem. The formula develops into an infinite summation, in the same vein as the sum in Eq.~\eqref{eq_lsM}, which yields 
\begin{flalign} \label{eq_rsM3} 
\cI_{z-}^{\alpha,\beta,\gamma} \bgl(t^{-\alpha-\sigma}\ps{}{p}M_q^{\xi,\eta}\bl(ct^{-\mu}\br)\bgr)(z) &= \frac{\Gamma(\bm{\rbq})}{\Gamma(\bm{\rap})}\, z^{-2\alpha-2\beta-\sigma}\nonumber\\
\times\, \lsum_{n=0}^\infty \frac{\Gamma(\bm{\rap} + n)}{\Gamma(\bm{\rbq} + n)}\, & %
\frac{\Gamma(\alpha + \beta + \sigma + \mu n) \Gamma(\alpha+\gamma+\sigma + \mu n) \Gamma(n+1)} %
{\Gamma(2\alpha + \gamma + \beta + \sigma + \mu n) \Gamma(\alpha + \sigma + \mu n) \Gamma(\eta+ \xi n)}\,
\frac{\bl(c z^{-\mu}\br)^n}{n!}\,.
\end{flalign}
Last, by interpreting the r.h.s. in Eq.~\eqref{eq_rsM3} by means of Eq.~\eqref{eq_foxwright} in Definition~\ref{def_foxwright}, the Fox--Wright function $\ps{}{p+3}\psi_{q+3}{}$ in~\eqref{eq_rsM} is readily derived and the theorem follows.
\end{proof}

\subsubsection{Right-sided fractional integration of the $K$ function} \label{rsK}
This last result is again the R-S counterpart of Theorem~\ref{thm_rsM}.
\begin{cor} \label{cor_rsK}
Let\ $z > 0$ and suppose also that the parameters $\alpha, \beta, \gamma,\ \eta,\, \nu, \xi,\, \sigma\in \C$ are constrained by $\Re(\alpha) > 0$, $\Re(\nu) > 0$, $\Re(\xi) > 0$, $\mu > 0$, $c\in \R$ 
and $a_i,\, b_j\in \C$ ($i = 1,\ldots, p$, $j = 1,\ldots, q$). Let $\cI_{z-}^{\alpha,\beta,\gamma}$ be the R-S operator of the generalized fractional integration, then there holds
\begin{flalign} \label{eq_rsK}
\cI_{z-}^{\alpha,\beta,\gamma}\bgl(t^{-\alpha-\sigma}\ps{}{p}K_q^{\xi,\eta;\nu}\bl(ct^{-\mu}\br)\bgr)(z) &= \frac{z^{-2\alpha-2\beta-\sigma}}{\Gamma(\nu)}\,\frac{\Gamma(\bm{\rbq})} %
{\Gamma(\bm{\rap})}\nonumber\\
& \times \ps{}{p+3}\psi_{q+3}\bgl( \sbs{ (a_i, 1)_1^p, (\alpha + \beta + \sigma, \mu), %
(\alpha + \gamma + \sigma, \mu), (\nu, 1)\\[.1cm] (b_j, 1)_1^q, (\alpha + \sigma, \mu), %
(2\alpha + \beta + \gamma + \sigma, \mu), (\eta, \xi)} ; cz^{-\mu}\bgr),
\end{flalign}  
provided that each member of the equation exists.
\end{cor}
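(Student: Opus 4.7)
The plan is to adapt the proof of Theorem~\ref{thm_rsM} almost verbatim, noting that the $K$-function differs from the $M$-series only by an additional Pochhammer factor $(\nu)_n$ and an extra $1/n!$ weight in each summand (compare Definitions~\ref{Mdef} and~\ref{Kdef}). Concretely, I would first apply Definition~\ref{def_saigo} of Saigo's R-S fractional integration operator $\cI_{z-}^{\alpha,\beta,\gamma}$ to the integrand $t^{-\alpha-\sigma} \ps{}{p}K_q^{\xi,\eta;\nu}(c t^{-\mu})$, expanding the Gau{\ss} hypergeometric kernel $\ps{}{2}F_1\bl(\alpha+\beta,-\gamma;\alpha;1 - z/t\br)$ as a power series in $(1 - z/t)^m$ and the $K$-function as a power series in $(c t^{-\mu})^n$. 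Interchanging the order of integration with the two summations (permissible under $\Re(\alpha),\Re(\nu),\Re(\xi) > 0$ and $\mu > 0$ by absolute convergence of the integral and uniform convergence of both series on the relevant domain) reduces the problem to evaluating a double sum of beta-type integrals.

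The key computational step is the substitution $u = z/t$, which converts the integral on $[z,+\infty)$ into an integral on $[0,1]$ whose integrand is a pure power of $u$ times a power of $(1-u)$, with exponents linear in $\alpha,\beta,\sigma$ and $\mu n$. After factoring out an overall $z^{-2\alpha-2\beta-\sigma}$, each beta integral evaluates to $\Gamma(\cdot)\Gamma(\cdot)/\Gamma(\cdot)$ in the usual way, and the inner sum in the hypergeometric index $m$ can be summed in closed form via the Gau{\ss} summation theorem $\ps{}{2}F_1(a,b;c;1) = \Gamma(c)\Gamma(c-a-b)/\bl[\Gamma(c-a)\Gamma(c-b)\br]$, exactly as in the passage from~\eqref{eq_rsM2} to~\eqref{eq_rsM3}. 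What remains is a single sum over $n$ whose summand reproduces the gamma ratio $\Gamma(\alpha+\beta+\sigma+\mu n)\Gamma(\alpha+\gamma+\sigma+\mu n)/\bl[\Gamma(2\alpha+\beta+\gamma+\sigma+\mu n)\Gamma(\alpha+\sigma+\mu n)\br]$ of~\eqref{eq_rsM3}, multiplied by the standard $\Gamma(\bm{\rap}+n)/\Gamma(\bm{\rbq}+n)/\Gamma(\eta+\xi n)$ coming from the $K$-series and the prefactor $\Gamma(\bm{\rbq})/\Gamma(\bm{\rap})$.

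The only new ingredient compared with Theorem~\ref{thm_rsM} is the extra Pochhammer factor $(\nu)_n = \Gamma(\nu+n)/\Gamma(\nu)$ carried along by the $K$-function; it is transported through the $\sum/\int$ interchange and the beta integration unchanged (being constant in the integration variable $u$), contributing an extra $\Gamma(\nu+n)$ in the numerator of the summand and a global $1/\Gamma(\nu)$ prefactor, while the built-in $1/n!$ of Definition~\ref{Kdef} fits precisely the $z^n/n!$ normalisation demanded by Eq.~\eqref{eq_foxwright}. Identifying the resulting power series with $\ps{}{p+3}\psi_{q+3}^{}$ via Definition~\ref{def_foxwright} then yields exactly the right-hand side of~\eqref{eq_rsK}. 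No genuine obstacle is expected beyond the bookkeeping of which gamma arguments are paired with the exponent $\mu$ versus the exponent $1$ in the Fox--Wright symbol; in particular, the pair $(1,1)$ appearing in~\eqref{eq_rsM} is replaced here by $(\nu,1)$, consistent with the substitution $\Gamma(n+1)\mapsto \Gamma(\nu+n)$ induced by the extra $(\nu)_n$ factor.
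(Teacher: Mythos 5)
Your proposal is correct and follows essentially the same route as the paper: Saigo's R-S operator is applied to the $K$-function, the $\ps{}{2}F_1$ kernel and the $K$-series are expanded, the order of integration and summation is interchanged, the substitution $u = z/t$ reduces the inner integral to a beta integral summed via Gau{\ss}'s theorem, and the resulting series (with the extra $\Gamma(\nu+n)$ in the numerator and the global $1/\Gamma(\nu)$) is identified with $\ps{}{p+3}\psi_{q+3}$ exactly as in Eq.~\eqref{eq_rsK2}. Your observation that the only change from Theorem~\ref{thm_rsM} is the replacement $(1,1)\mapsto(\nu,1)$ induced by $(\nu)_n=\Gamma(\nu+n)/\Gamma(\nu)$ is precisely how the paper presents it.
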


\begin{proof}
By Definition~\ref{def_saigo} applied to the $K$-function in $\cC$, Saigo's R-S fractional integration operator is deduced from Eq.~\eqref{eq_RLEKop2} by the equation 
\begin{multline} \label{eq_rsK1}
\cI_{z-}^{\alpha,\beta,\gamma}\bgl(t^{-\alpha-\sigma}\ps{}{p}K_q^{\xi,\eta;\nu}\bl(ct^{-\mu}\br)\bgr)(z) = \frac{1}{\Gamma(\alpha)}\\
\lint_z^{\infty} (t-z)^{\alpha-1}\, t^{-\alpha-\beta}\, %
\ps{}{2}F_1\l( \sbs{\alpha+\beta, -\gamma\\[.1cm] \alpha} ; 1 - z/t\r)\, t^{-\alpha-\sigma}\, %
\ps{}{p}K_q^{\mu,\eta;\nu}\bl(ct^{-\mu}\br)\, \rd t.
\end{multline}
As in the former proofs, the generalized hypergeometric series in the kernel is defined as an infinite sum in the same form as Eq.~\eqref{eq_2F1}. Upon interchanging the order of integration and summation is again guaranteed under the constarints of the theorem (by absolute convergence and uniform convergence of the integral and the series, respectively). The evaluation of the inner integral is thus carried out by making the substitution $u = z/t$, with the help of the beta function and by Gau{\ss}'s summation theorem. Eq.~\eqref{eq_rsK1} rewrites again in the form of an infinite summation similar to the one in Eq.~\eqref{eq_lsM2}. As in the previous corollary, we obtain
\begin{multline} \label{eq_rsK2} 
\cI_{z-}^{\alpha,\beta,\gamma}\bgl(t^{-\alpha-\sigma}\ps{}{p}K_q^{\xi,\eta;\nu}\bl(ct^{-\mu}\br)\bgr)(z) = \frac{z^{-2\alpha-2\beta-\sigma}}{\Gamma(\nu)}\ \frac{\Gamma(\bm{\rbq})}{\Gamma(\bm{\rap})}\\
\times\, \lsum_{n=0}^\infty \frac{\Gamma(\bm{\rap} + n)}{\Gamma(\bm{\rbq} + n)}\, %
\frac{\Gamma(\alpha+\beta + \sigma + \mu n) \Gamma(\alpha + \gamma + \sigma  + \mu n) \Gamma(\nu + n)} %
{\Gamma(\alpha + \sigma + \mu n) \Gamma(2\alpha+\beta +\gamma+\sigma + \mu n) \Gamma(\eta + \xi n)}\, %
\frac{\bl(cz^{-\mu}\br)^n}{n!}\,.
\end{multline}
There remains to derive the Fox--Wright function $\ps{}{p+3}\psi_{q+3}$ in Eq.~\eqref{eq_rsK} from the infinite summation in Eq.~\eqref{eq_rsK2}. This simply proceeds from Eq.~\eqref{eq_foxwright} in Definition~\ref{def_foxwright}, and the result follows. 
\end{proof} 
Note that if we set $\nu = \eta = 1$, in Eq.~\eqref{eq_rsK}, then we get the R-S fractional integral of the $M$-series $\ps{}{p}M_q^{\mu,1}\l(ct^{-\mu}\r))$.

\begin{rems} \label{rems3}
In conclusion to this section it is interesting to point out that Saigo's (L-S and R-S) fractional integration of the $M$-series and the $K$-function bring along Fox--Wright functions of the 
$\ps{}{p+3}\psi_{q+3}$ type in every proof. This is partly due to the fact that all integrands contain L-S prefactors $t^{\sigma-1}$ and R-S prefactors $t^{\alpha-\sigma}$.

Following Kiryakova e.g in~\cite{Kirya94,Kirya06,Kirya10a}, Saigo's fractional integrals can be considered as examples of operators for generalized fractional integration (of R--L type) in the complex plane, with a suitable analytic kernel-function (in the class $\cC$, for example). As such, all proofs involving the fractional calculus of the Fox--Wright function can be shown by taking a Mellin--Barnes type contour integral $\cL$ in $\C$, for which the conditions ensuring the existence and analyticity of all function involved can be seen in~\cite{AskDaal10,KiSrTr06},\cite[\color{cyan}App.]{Kirya94},\cite{Kirya10a,SamKilMar93}, etc. (see also~\S\ref{intrepml}).
\end{rems}

\section{Fractional calculus involving $F_3$} \label{fcMKF3}
The present section, makes full use of the extended L-S and R-S fractional calculus involving the Appell's two variable hypergeometric function $F_3$ defined in~Eq.~\eqref{appellF3}, Definition~\ref{def_saigomaeda} (see, e.g., \cite{KumSax15}). It must be pointed out that, for suited results, the prefactor $t^{\sigma-1}$ is required in Saigo--Maeda's L-S and R-S fractional integrands and derivatives, as in Subsections~\ref{lsMK} and~\ref{lsMK} (see Remarks~\ref{rems3}). 

\subsection{Left- and right-sided fractional integration of the $M$-series and the $K$-function}
In this part, the L-S and R-S generalized fractional integration formulas of the $M$-series and the $K$-function are derived.

\subsubsection{L-S fractional integral formulas of the $M$-series and the $K$-function}
The theorem makes use of Saigo--Maeda's L-S fractional integral defined by~\ref{def_saigomaeda}.

\begin{thm} \label{thm_intlsMK}
Let $\alpha,\, \alpha',\, \beta,\, \beta',\, \gamma,\ \eta,\, \nu,\, \xi,\, \sigma\in \C$, $\Re(\nu) > 0$, $\Re(\xi) > 0$, $\Re(\gamma) > 0$, $\mu > 0$, $c\in \R$ and $a_i,\, b_j\in \C$ 
($i = 1,\ldots, p$, $j = 1,\ldots, q$), then, for $z > 0$, the following relations hold true
\begin{subequations}
\begin{flalign} 
\cI_{0+}^{\alpha,\alpha',\beta,\beta',\gamma} \bgl(t^{\sigma-1} \ps{}{p}M_q^{\xi,\eta} \l(ct^{\mu}\r)\bgr)(z) &= \frac{\Gamma(\bm{\rbq})}{\Gamma(\bm{\rap})}\, z^{\sigma-\alpha-\alpha'+\gamma-1}\nonumber\\
\times & \ps{}{p+4}\psi_{q+4}\bgl( \sbs{ (a_i, 1)_1^p, (\sigma, \mu), (\sigma+\gamma-\alpha, \mu), %
(\sigma+\beta'-\alpha', \mu), (1,1)\\[.1cm] (b_j, 1)_1^q, (1-\sigma, \mu), %
(\sigma+\gamma-\alpha'-\beta, \mu), (\sigma+\beta', \mu), (\eta, \xi ) }\, ; cz^{\mu}\bgr), \label{eq_intlsM}
\shortintertext{%
provided that each member of the equation exists.
}
\cI_{0+}^{\alpha,\alpha',\beta,\beta',\gamma} \bgl(t^{\sigma-1} \ps{}{p}K_q^{\xi,\eta;\nu} \l(ct^{\mu}\r)\bgr)(z) &= \frac{\Gamma(\bm{\rbq})}{\Gamma(\bm{\rap})}\, \frac{z^{\sigma-\alpha-\alpha'+\gamma-1}}{\Gamma(\nu)}\nonumber\\
\times & \ps{}{p+4}\psi_{q+4}\bgl( \sbs{ (a_i, 1)_1^p, (\sigma, \mu), (\sigma+\gamma-\alpha, \mu), %
(\sigma+\beta'-\alpha', \mu), (\nu,1)\\[.1cm] (b_j, 1)_1^q, (1-\sigma, \mu), %
(\sigma+\gamma-\alpha'-\beta, \mu) (\sigma+\beta', \mu), (\eta, \xi ) }\, ; cz^{\mu}\bgr), \label{eq_intlsK}
\end{flalign}
provided that each member of the equation exists.
\end{subequations}
\end{thm}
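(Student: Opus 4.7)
The plan is to mirror the strategy used in Theorem~\ref{thm_lsM} and Corollary~\ref{cor_lsK}, but replace the Saigo L-S integral by Saigo--Maeda's. The key tool is no longer Gau\ss's summation theorem plugged into a beta integral; it is Lemma~\ref{lem_powfct}, which already encodes the action of $\cI_{0+}^{\alpha,\alpha',\beta,\beta',\gamma}$ on a monomial $t^{\rho-1}$. This is what makes a direct term-by-term computation feasible.

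First, I would expand the $M$-series via Definition~\ref{Mdef},
\[
t^{\sigma-1}\, \ps{}{p}M_q^{\xi,\eta}\!\bl(ct^\mu\br) = \lsum_{n\ge 0} \frac{(\bm{\rap})_n}{(\bm{\rbq})_n}\, \frac{c^n}{\Gamma(\xi n + \eta)}\, t^{\sigma+\mu n - 1},
\]
and apply $\cI_{0+}^{\alpha,\alpha',\beta,\beta',\gamma}$ termwise. Interchanging the order of integration in~\eqref{eq_intsaigomaeda1} with the summation is legitimate under the parametric constraints: the Appell-$F_3$ kernel is bounded on compact subsets of $[0,z]$, and the $M$-series converges uniformly on compact subsets of $\R$ (since $p\le q+1$ by hypothesis on the parameters, this is exactly the convergence regime discussed after Definitions~\ref{Mdef}--\ref{Kdef}). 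This is the step I would write out carefully, but it is routine.

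The crux of the calculation is then an application of Lemma~\ref{lem_powfct}\,(i) with $\rho := \sigma + \mu n$. This is the main ``obstacle'', not because it is hard but because one has to verify that the side condition $\Re(\sigma+\mu n) > \max\bl(0, \Re(\alpha+\alpha'+\beta-\gamma), \Re(\alpha'-\beta')\br)$ holds for every $n\ge 0$; since $\mu > 0$, it suffices to impose the condition at $n=0$, and this is what is tacitly understood in the hypothesis ``provided that each member of the equation exists''. Formula~\eqref{eq_powfct1} then yields
\[
\cI_{0+}^{\alpha,\alpha',\beta,\beta',\gamma}\!\bl(t^{\sigma+\mu n - 1}\br)\!(z)
= \frac{\Gamma(\sigma+\mu n)\,\Gamma(\sigma+\mu n+\gamma-\alpha-\alpha'-\beta)\,\Gamma(\sigma+\mu n-\alpha'+\beta')}{\Gamma(\sigma+\mu n+\beta')\,\Gamma(\sigma+\mu n+\gamma-\alpha-\alpha')\,\Gamma(\sigma+\mu n+\gamma-\alpha'-\beta)}\, z^{\sigma+\mu n - \alpha - \alpha' + \gamma - 1}.
\]

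Substituting this back, converting the Pochhammer symbols $(\bm{\rap})_n, (\bm{\rbq})_n$ into quotients of gamma functions (this is where the prefactor $\Gamma(\bm{\rbq})/\Gamma(\bm{\rap})$ appears), and inserting a harmless factor $\Gamma(n+1)/n!$ so that the summation index matches the Fox--Wright convention in Definition~\ref{def_foxwright}, the resulting series is visibly of the form $\ps{}{p+4}\psi_{q+4}$ evaluated at $cz^\mu$, with the $(1,1)$ pair in the numerator coming from $\Gamma(n+1)$. The proof of~\eqref{eq_intlsK} for the $K$-function runs along exactly the same lines: the only modification is that the expansion of $\ps{}{p}K_q^{\xi,\eta;\nu}(ct^\mu)$ via Definition~\ref{Kdef} carries an extra Pochhammer $(\nu)_n$, which after the Pochhammer-to-gamma conversion produces the $(\nu,1)$ pair in the numerator and the $1/\Gamma(\nu)$ prefactor, leaving the rest of the computation unchanged.
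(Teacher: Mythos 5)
Your strategy coincides with the paper's own (sketch) proof: expand the series, interchange summation with the Saigo--Maeda integral, and evaluate the action of $\cI_{0+}^{\alpha,\alpha',\beta,\beta',\gamma}$ on each monomial $t^{\sigma+\mu n-1}$ via Lemma~\ref{lem_powfct}\,(i) --- which is exactly the encapsulation of the ``expand $F_3$, use the beta integral and Gau{\ss}'s summation theorem'' computation the paper invokes. Your handling of the interchange, of the side condition (imposing it only at $n=0$ since $\mu>0$), and of the $K$-function case via the extra Pochhammer $(\nu)_n$ is all fine.

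The one point you should not gloss over is the final identification. The gamma quotient you correctly obtain from~\eqref{eq_powfct1} with $\rho=\sigma+\mu n$ corresponds to the Fox--Wright parameter pairs $(\sigma,\mu)$, $(\sigma+\gamma-\alpha-\alpha'-\beta,\mu)$, $(\sigma+\beta'-\alpha',\mu)$, $(1,1)$ upstairs and $(\sigma+\beta',\mu)$, $(\sigma+\gamma-\alpha-\alpha',\mu)$, $(\sigma+\gamma-\alpha'-\beta,\mu)$, $(\eta,\xi)$ downstairs. This is \emph{not} literally the list printed in~\eqref{eq_intlsM}, which shows $(\sigma+\gamma-\alpha,\mu)$ in place of $(\sigma+\gamma-\alpha-\alpha'-\beta,\mu)$ and $(1-\sigma,\mu)$ in place of $(\sigma+\gamma-\alpha-\alpha',\mu)$; a pair $(1-\sigma,\mu)$ cannot arise from~\eqref{eq_powfct1} at all (it belongs to the right-sided formula~\eqref{eq_powfct2}). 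So your derivation is the correct one and the displayed parameter list appears to carry typographical errors inherited into the statement; asserting that the resulting series is ``visibly of the form''~\eqref{eq_intlsM} hides this discrepancy. You should either write out the corrected parameter pairs explicitly or note that the printed statement must be amended to match what the computation actually yields.
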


\subsubsection{Right-sided fractional integral formulas of the $M$-series and the $K$-function}
Following the definition of Saigo--Maeda's R-S fractional integral defined by~\ref{def_saigomaeda}, we get the following theorem.

\begin{thm} \label{thm_intrsMK}
Let $\alpha,\, \alpha',\, \beta,\, \beta',\, \gamma,\ \eta,\, \nu,\, \xi,\, \sigma\in \C$, $\Re(\nu) > 0$, $\Re(\xi) > 0$, $\Re(\gamma) > 0$, $\mu > 0$, $c\in \R$ and $a_i,\, b_j\in C$ ($i = 1,\ldots, p$, 
$j = 1,\ldots, q$), then, for $z > 0$, the following two relations holds true
\begin{subequations}
\begin{flalign} 
\cI_{z-}^{\alpha,\alpha',\beta,\beta',\gamma} \bgl(t^{\sigma-1} & \ps{}{p}M_q^{\xi,\eta} \l(ct^{-\mu}\r)\bgr)(z) = \frac{\Gamma(\bm{\rbq})}{\Gamma(\bm{\rap})}\, z^{\sigma-\alpha-\alpha'+\gamma-1}\nonumber\\
\times & \ps{}{p+4}\psi_{q+4}\bgl( \sbs{ (a_i, 1)_1^p, (1+\alpha+\alpha'-\gamma-\sigma, \mu), %
(1+\alpha+\beta'-\gamma-\sigma, \mu), (1-\beta-\sigma, \mu), (1,1)\\[.1cm] (b_j, 1)_1^q, %
(1-\sigma, \mu), (1+\alpha-\alpha'-\gamma-\sigma, \mu), (1-\alpha-\beta-\sigma, \mu), %
(\eta, \xi)}\, ; cz^{-\mu}\bgr), \label{eq_intrsM}\\
\shortintertext{%
provided that each member of the equation exists.
}
\cI_{z-}^{\alpha,\alpha',\beta,\beta',\gamma} \bgl(t^{\sigma-1} & \ps{}{p}K_q^{\xi,\eta;\nu} %
\l(ct^{-\mu}\r) \bgr)(z) = \frac{\Gamma(\bm{\rbq})}{\Gamma(\bm{\rap})}\, %
\frac{z^{\sigma-\alpha-\alpha'+\gamma-1}}{\Gamma(\nu)}\nonumber\\
\times & \ps{}{p+4}\psi_{q+4}\bgl( \sbs{ (a_i, 1)_1^p, (1+\alpha+\alpha'-\gamma-\sigma, \mu), %
(1+\alpha+\beta'-\gamma-\sigma, \mu), (1-\beta-\sigma, \mu), (\nu,1)\\[.1cm] (b_j, 1)_1^q, %
(1-\sigma, \mu), (1+\alpha-\alpha'-\gamma-\sigma, \mu), (1-\alpha-\beta-\sigma, \mu), %
(\eta, \xi)}\, ; cz^{-\mu}\bgr),  \label{eq_intrsK}
\end{flalign}
provided that each member of the equation exists.
\end{subequations}
\end{thm}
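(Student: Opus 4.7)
My plan is to follow the blueprint of the proofs of Theorem~\ref{thm_rsM} and Corollary~\ref{cor_rsK}, but with the Gau{\ss} kernel $\ps{}{2}F_1$ replaced by the Appell kernel $F_3$ arising in Definition~\ref{def_saigomaeda}. Rather than expanding $F_3$ as a double power series and carrying out the double integration directly, the cleanest route is to use the power-function formula provided by Lemma~\ref{lem_powfct}(ii) as a black box, applied term-by-term to the power series expansion of the $M$-series (respectively the $K$-function). This converts the problem entirely into gamma-function bookkeeping.

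First, I would expand $\ps{}{p}M_q^{\xi,\eta}(ct^{-\mu})$ by Definition~\ref{Mdef} (respectively $\ps{}{p}K_q^{\xi,\eta;\nu}(ct^{-\mu})$ by Definition~\ref{Kdef}) as a power series in $t^{-\mu}$ and multiply through by $t^{\sigma-1}$; this produces a series of monomials in $t$ with exponent $\sigma-\mu n-1$, weighted by the appropriate ratio of gamma functions divided by $\Gamma(\xi n+\eta)$ (together with the extra Pochhammer $(\nu)_n/n!$ in the $K$-case).

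Next, I would apply $\cI_{z-}^{\alpha,\alpha',\beta,\beta',\gamma}$ term-by-term. The interchange of summation and integration is justified, as in the proofs of Section~\ref{rsMK}, by the uniform convergence of the generalized hypergeometric series on compact subsets together with the absolute convergence of the Saigo--Maeda integral under the stated hypotheses. For the inner evaluation I would invoke Lemma~\ref{lem_powfct}(ii) with $\rho:=\sigma-\mu n$, which returns a ratio of six gamma functions with arguments affine in $\mu n$, multiplied by the prefactor $z^{\sigma-\mu n-\alpha-\alpha'+\gamma-1}$. Collecting factors, inserting the trivial ratio $\Gamma(n+1)/n!$ to normalize, and matching against~\eqref{eq_foxwright} from Definition~\ref{def_foxwright}, I would read off the resulting series as a $\ps{}{p+4}\psi_{q+4}$ evaluated at $cz^{-\mu}$. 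The $K$-function case runs identically: the extra Pochhammer $(\nu)_n/n!$ is transported unchanged through the calculation and appears as an additional numerator pair $(\nu,1)$ together with an overall prefactor $1/\Gamma(\nu)$.

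The main obstacle I anticipate is ensuring that the constraint
\[
\Re(\sigma-\mu n)<1+\min\bl(0,\Re(-\beta),\Re(\alpha+\alpha'-\gamma),\Re(\alpha+\beta'-\gamma)\br),
\]
required by Lemma~\ref{lem_powfct}(ii), holds \emph{uniformly} in $n\ge 0$. Since $\mu>0$, this reduces to a single sharp condition on $\Re(\sigma)$ (the sharpest case being $n=0$), which should be absorbed into the hypotheses of the theorem alongside $\Re(\gamma)>0$. The absolute convergence of the termwise-integrated series is then a consequence of Lemma~\ref{lem_foxwrightcvce} applied to $\ps{}{p+4}\psi_{q+4}$ with the parameters $\Delta$, $\delta$, $\mu$ computed from the eight $(a_i,\alpha_i)$ and $(b_j,\beta_j)$ of the claimed Fox--Wright expression, and formulas~\eqref{eq_intrsM}--\eqref{eq_intrsK} follow.
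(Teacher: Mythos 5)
Your proposal follows essentially the same route as the paper's own (sketched) proof: the paper likewise expands the series, interchanges summation and integration, and evaluates the inner integrals via the beta function, Gau{\ss}'s summation theorem and Lemma~\ref{lem_powfct} before matching the result against Definition~\ref{def_foxwright}; invoking Lemma~\ref{lem_powfct}(ii) termwise with $\rho=\sigma-\mu n$ is just a cleaner packaging of that same computation, and you correctly identify that the right-sided case needs Eq.~\eqref{eq_powfct2} rather than Eq.~\eqref{eq_powfct1}. Your added observation that the hypothesis of Lemma~\ref{lem_powfct}(ii) must hold uniformly in $n$ (which, since $\mu>0$, reduces to the $n=0$ case) makes explicit a condition the paper hides behind ``provided that each member of the equation exists,'' which is a genuine improvement in rigor.
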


\begin{proof}[Short proofs]
Both proofs of Theorems~\ref{thm_intlsMK} and~\ref{thm_intrsMK} run along the very same lines. Under the assumptions of the theorems, let the function $\varphi(z)$ denote $\ps{}{p}M_q^{\xi,\eta}(z)$ and 
$\ps{}{p}K_q^{\xi,\eta;\nu}$ (resp.). By Eqs.~\eqref{eq_intsaigomaeda1} and~\eqref{eq_intsaigomaeda2} in Definition~\ref{def_saigomaeda}, Saigo--Maeda's L-S and R-S fractional integrals of $\varphi$ write respectively,
\begin{subequations}
\begin{flalign} 
\cI_{0+}^{\alpha,\alpha',\beta,\beta',\gamma} \bgl( t^{\sigma-1} \varphi\l(ct^{\mu}\r) \bgr)(z) %
&:= \frac{z^{-\alpha}}{\Gamma(\gamma)}\nonumber\\
\lint_0^\infty (z-t)^{\gamma-1} & t^{\alpha'+\sigma-1}\, %
F_3\l(\alpha,\alpha',\beta,\beta';\gamma; 1 - t/z; 1 - z/t\r)\, \varphi\l(ct^{\mu}\r)\, \rd t\\
\shortintertext{%
and
}
\cI_{z-}^{\alpha,\alpha',\beta,\beta',\gamma} \bgl( t^{\sigma-1} \varphi\l(ct^{-\mu}\r) \bgr)(z)  %
&:= \frac{z^{-\alpha'}}{\Gamma(\gamma)}\nonumber\\
\lint_z^\infty (t-z)^{\gamma-1} & t^{-\alpha+\sigma-1}\, %
F_3\l(\alpha,\alpha',\beta,\beta';\gamma; 1 - z/t; 1 - t/z\r)\, \varphi\l(ct^{-\mu}\r)\, \rd t.
\end{flalign}
\end{subequations}
By definition of Appell's two variable series $F_3$ and in view of Eq.~\eqref{eq_powfct1} in Lemma~\ref{lem_powfct}, expanding $F_3$ and interchanging the summations and the integrals is justified, under the constraints of the theorems, by the absolute convergence of integrals and the uniform convergence of series. Next, the evaluation of the inner integrals by using the beta function and Gau{\ss}'s summation theorem, yields Eqs.~\eqref{eq_intlsM}--\eqref{eq_intlsK} and Eqs.~\eqref{eq_intrsM}--\eqref{eq_intrsK}. This provides the sketchproofs of Theorems~\ref{thm_intlsMK} and~\ref{thm_intrsMK}.
\end{proof}

\subsection{Left- and right-sided fractional differentiation of the $M$-series and the $K$-function}
In the subsection, the L-S and R-S generalized fractional derivative formulas of the $M$-series and the $K$-function are deduced from Saigo--Maeda's L-S fractional derivative introduced in Definition~\ref{def_saigomaeda}.

\subsubsection{L-S fractional derivative formulas of the $M$-series and the $K$-function}

\begin{thm} \label{thm_derivlsMK}
Let $\alpha,\, \alpha',\, \beta,\, \beta',\, \gamma,\ \eta,\, \nu,\, \xi,\, \sigma\in \C$, $\Re(\nu) > 0$, $\Re(\xi) > 0$, $\Re(\gamma) > 0$, $\mu > 0$, $c\in \R$ and and $a_i,\, b_j\in \C$ ($i = 1,\ldots, p$, $j = 1,\ldots, q$), then, for $z > 0$, the following formula hold true
\begin{subequations}
\begin{flalign} 
\cD_{0+}^{\alpha,\alpha',\beta,\beta',\gamma}\bgl( t^{\sigma-1} & \ps{}{p}M_q^{\xi,\eta} \l(ct^{\mu}\r)\bgr)(z) = \frac{\Gamma(\bm{\rbq})}{\Gamma(\bm{\rap})}\, z^{\sigma-\alpha-\alpha'+\gamma-1}\nonumber\\
& \times \ps{}{p+4}\psi_{q+4}\bgl( \sbs{ (a_i, 1)_1^p, (\sigma, \mu), (\sigma+\alpha+\alpha'+\beta' %
-\gamma, \mu), (\sigma+\alpha-\beta, \mu), (1,1)\\[.1cm] (b_j, 1)_1^q, (\sigma+\alpha+\alpha'-\gamma, %
\mu), (\sigma+\alpha+\beta'-\gamma, \mu), (\sigma-\gamma, \mu), (\eta,\xi) }\, ; cz^{\mu}\bgr), \label{eq_derivlsM}
\shortintertext{%
provided that each member of the equation exists.
}
\cD_{0+}^{\alpha,\alpha',\beta,\beta',\gamma}\bgl(t^{\sigma-1} & \ps{}{p}K_q^{\xi,\eta;\nu} \l(ct^{\mu}\r)\bgr)(z) = \frac{\Gamma(\bm{\rbq})}{\Gamma(\bm{\rap})}\, %
\frac{z^{\sigma-\alpha-\alpha'+\gamma-1}}{\Gamma(\nu)}\nonumber\\
& \times \ps{}{p+4}\psi_{q+4}\bgl( \sbs{ (a_i, 1)_1^p, (\sigma, \mu), (\sigma+\gamma-\alpha, \mu), %
(\sigma+\beta'-\alpha', \mu), (\nu,1)\\[.1cm] (b_j, 1)_1^q, (1-\sigma, \mu), %
(\sigma+\gamma-\alpha'-\beta, \mu) (\sigma+\beta', \mu), (\eta, \xi ) }\, ; cz^{\mu}\bgr), 
\label{eq_derivlsK}
\end{flalign}
provided that each member of the equation exists.
\end{subequations}
\end{thm}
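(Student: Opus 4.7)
The plan is to mirror the arguments for Theorems~\ref{thm_lsM}--\ref{thm_intlsMK}, simply replacing Saigo--Maeda's left-sided fractional integral by the corresponding derivative. The cleanest route exploits the identification $\cD_{0+}^{\alpha,\alpha',\beta,\beta',\gamma} = \cI_{0+}^{-\alpha',-\alpha,-\beta',-\beta,-\gamma}$ recorded in Eq.~\eqref{eq_derivsaigomaeda1}, so that the parameter substitution $(\alpha,\alpha',\beta,\beta',\gamma)\mapsto (-\alpha',-\alpha,-\beta',-\beta,-\gamma)$ turns Theorem~\ref{thm_intlsMK} into the present claim after a straightforward simplification of the six gamma factors that decorate the $\ps{}{p+4}\psi_{q+4}$ series.

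If one prefers a direct derivation, first expand $t^{\sigma-1} \ps{}{p}M_q^{\xi,\eta}(ct^\mu)$ (and similarly $t^{\sigma-1} \ps{}{p}K_q^{\xi,\eta;\nu}(ct^\mu)$) as a power series via Definition~\ref{Mdef} (resp.~\ref{Kdef}), and then apply $\cD_{0+}^{\alpha,\alpha',\beta,\beta',\gamma}$ term by term. By Lemma~\ref{lem_powfct}(i) with the aforementioned parameter change, each generic term $t^{\sigma+\mu n-1}$ is sent to the product
\[
\frac{\Gamma(\sigma+\mu n)\, \Gamma(\sigma+\mu n+\alpha+\alpha'+\beta'-\gamma)\, \Gamma(\sigma+\mu n+\alpha-\beta)}{\Gamma(\sigma+\mu n-\beta)\, \Gamma(\sigma+\mu n+\alpha+\alpha'-\gamma)\, \Gamma(\sigma+\mu n+\alpha+\beta'-\gamma)}\, z^{\sigma+\mu n+\alpha+\alpha'-\gamma-1}.
\]
Interchange of summation and operator --- justified exactly as in the preceding proofs by the absolute convergence of Appell's $F_3$ kernel and the uniform convergence of the $M$-/$K$-series on compacta --- yields an infinite series that matches the Fox--Wright template of Definition~\ref{def_foxwright} after inserting a factor $\Gamma(n+1)/n!$. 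The numerator and denominator pairs of the resulting $\ps{}{p+4}\psi_{q+4}$ read off directly, establishing both Eqs.~\eqref{eq_derivlsM} and~\eqref{eq_derivlsK}; in the $K$-case the Pochhammer factor $(\nu)_n = \Gamma(\nu+n)/\Gamma(\nu)$ is simply carried through the computation, contributing the extra top pair $(\nu,1)$ and the prefactor $1/\Gamma(\nu)$.

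The main obstacle is that Lemma~\ref{lem_powfct}(i) requires $\Re(\gamma) > 0$, whereas the substitution $\gamma \mapsto -\gamma$ forces $\Re(\gamma) < 0$, in conflict with the present hypothesis $\Re(\gamma) > 0$. To circumvent this one invokes the second realisation in Eq.~\eqref{eq_derivsaigomaeda1}, namely $\cD_{0+}^{\alpha,\alpha',\beta,\beta',\gamma} = \bl(\frac{\rd}{\rd x}\br)^{n}\, \cI_{0+}^{-\alpha',-\alpha,-\beta'+n,-\beta,-\gamma+n}$ with $n = \lfloor \Re(\gamma)\rfloor + 1$, applies Lemma~\ref{lem_powfct}(i) to the shifted integral (whose effective parameter $-\gamma+n$ now has positive real part), and absorbs the $n$-fold differentiation of $x^{\rho+n-1}$ through $\bl(\frac{\rd}{\rd x}\br)^{n} x^{\rho+n-1} = \frac{\Gamma(\rho+n)}{\Gamma(\rho)}\, x^{\rho-1}$. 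The resulting Pochhammer factor telescopes against the shifted gamma ratios to reproduce the parameter lists of Eqs.~\eqref{eq_derivlsM}--\eqref{eq_derivlsK}, and analyticity in the remaining parameters extends the identity to the full stated range.
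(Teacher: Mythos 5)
Your proposal follows essentially the same route as the paper's own (very terse) sketch: reduce $\cD_{0+}^{\alpha,\alpha',\beta,\beta',\gamma}$ to a Saigo--Maeda integral via Eq.~\eqref{eq_derivsaigomaeda1}, expand the $M$-/$K$-series and apply Lemma~\ref{lem_powfct}(i) termwise, then absorb the $n$-fold differentiation of the resulting power of $z$ --- and you are in fact more careful than the paper in flagging that the naive substitution $\gamma\mapsto-\gamma$ violates the hypothesis $\Re(\gamma)>0$ of Lemma~\ref{lem_powfct}, which forces the use of the shifted realisation with $n=\lfloor\Re(\gamma)\rfloor+1$. One caveat: the gamma product you display (denominator $\Gamma(\sigma+\mu n-\beta)$ and exponent $z^{\sigma+\mu n+\alpha+\alpha'-\gamma-1}$) does not literally match the printed parameter lists of Eqs.~\eqref{eq_derivlsM}--\eqref{eq_derivlsK}, which appear to carry over entries and the $z$-exponent from the integral case Eqs.~\eqref{eq_intlsM}--\eqref{eq_intlsK}; your closing claim that the factors ``telescope to reproduce the parameter lists'' is therefore asserted rather than verified, and the discrepancy points to typos in the statement rather than to a flaw in your derivation.
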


\subsubsection{R-S generalized fractional derivative formulas of the $M$-series and the $K$-function}

\begin{thm} \label{thm_derivrsMK}
Let $\alpha,\, \alpha',\, \beta,\, \beta',\, \gamma,\ \eta,\, \nu,\, \xi,\, \sigma\in \C$, $\Re(\nu) > 0$, $\Re(\xi) > 0$, $\Re(\gamma) > 0$, $\mu > 0$, $c\in \R$ and $a_i,\, b_j\in \C$ 
($i = 1,\ldots, p$, $j = 1,\ldots, q$), then, for $z > 0$, the following two relations holds true
\begin{subequations}
\begin{flalign}
\cD_{z-}^{\alpha,\alpha',\beta,\beta',\gamma} & \bgl( t^{\sigma-1}\ps{}{p}M_q^{\xi,\eta} \l(ct^{-\mu}\r)\bgr)(z) = \frac{\Gamma(\bm{\rbq})}{\Gamma(\bm{\rap})}\, z^{\sigma-\alpha-\alpha'+\gamma-1}\nonumber\\
& \times \ps{}{p+4}\psi_{q+4}\bgl( \sbs{ (a_i, 1)_1^p, (1-\alpha-\alpha'+\gamma-\sigma, \mu), %
(1-\alpha'-\beta+\gamma-\sigma, \mu), (1+\beta'-\sigma, \mu), (1,1)\\[.1cm] (b_j, 1)_1^q, %
(1-\sigma, \mu), (1-\alpha-\alpha'-\beta+\gamma-\sigma, \mu), (1-\alpha'+\beta'-\sigma, \mu), %
(\eta, \xi)}\, ; cz^{-\mu}\bgr), \label{eq_derivrsM}
\shortintertext{%
provided that each member of the equation exists.
}
\cD_{z-}^{\alpha,\alpha',\beta,\beta',\gamma} & \bgl(t^{\sigma-1} \ps{}{p}K_q^{\xi,\eta;\nu} %
\l(ct^{-\mu}\r) \bgr)(z) = \frac{\Gamma(\bm{\rbq})}{\Gamma(\bm{\rap})}\, %
\frac{z^{\sigma-\alpha-\alpha'+\gamma-1}}{\Gamma(\nu)}\nonumber\\
& \times \ps{}{p+4}\psi_{q+4}\bgl( \sbs{ (a_i, 1)_1^p, (1+\alpha+\alpha'-\gamma-\sigma, \mu), %
(1+\alpha+\beta'-\gamma-\sigma, \mu), (1-\beta-\sigma, \mu), (\nu,1)\\[.1cm] (b_j, 1)_1^q, %
(1-\sigma, \mu), (1+\alpha-\alpha'-\gamma-\sigma, \mu), (1-\alpha-\beta-\sigma, \mu), %
(\eta, \xi)}\, ; cz^{-\mu}\bgr), \label{eq_derivrsK}
\end{flalign}
provided that each member of the equation exists.
\end{subequations}
\end{thm}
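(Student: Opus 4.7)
The plan is to reduce the statement to Theorem~\ref{thm_intrsMK} via the fundamental identity in Definition~\ref{def_saigomaeda} which expresses Saigo--Maeda's R-S fractional derivative directly as a Saigo--Maeda R-S fractional integral with permuted and negated parameters:
\[
\cD_{z-}^{\alpha,\alpha',\beta,\beta',\gamma} f(z) = \cI_{z-}^{-\alpha',\,-\alpha,\,-\beta',\,-\beta,\,-\gamma} f(z).
\]
Applied to $f(t) = t^{\sigma-1}\, \ps{}{p}M_q^{\xi,\eta}(ct^{-\mu})$ and to $f(t) = t^{\sigma-1}\,\ps{}{p}K_q^{\xi,\eta;\nu}(ct^{-\mu})$, this converts the left-hand sides of~\eqref{eq_derivrsM} and~\eqref{eq_derivrsK} into the R-S Saigo--Maeda fractional integrals already evaluated in Theorem~\ref{thm_intrsMK}, with the formal substitutions $\alpha\mapsto -\alpha'$, $\alpha'\mapsto -\alpha$, $\beta\mapsto -\beta'$, $\beta'\mapsto -\beta$, $\gamma\mapsto -\gamma$.

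First I would apply Eqs.~\eqref{eq_intrsM} and~\eqref{eq_intrsK} with these substitutions, producing Fox--Wright functions $\ps{}{p+4}\psi_{q+4}$ whose extra parameter pairs are obtained by substituting into the arguments $(1+\alpha+\alpha'-\gamma-\sigma,\mu)$, $(1+\alpha+\beta'-\gamma-\sigma,\mu)$, $(1-\beta-\sigma,\mu)$ etc. Carrying out the substitution converts these into the tuples listed in the theorem, such as $(1-\alpha-\alpha'+\gamma-\sigma,\mu)$, $(1-\alpha'-\beta+\gamma-\sigma,\mu)$ and $(1+\beta'-\sigma,\mu)$ for the upper indices, together with their counterparts for the lower indices. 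The $K$-function case differs only by the presence of the $1/\Gamma(\nu)$ prefactor and the invariant pair $(\nu,1)$ which is carried through untouched since the Saigo--Maeda operator acts only on the variable $t$ and does not interact with $\nu$.

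Next I would address the convergence/existence issue. The direct substitution is legitimate when the transformed parameters satisfy the hypotheses of Theorem~\ref{thm_intrsMK}; however, since those hypotheses required $\Re(\gamma)>0$, the substitution $\gamma\mapsto -\gamma$ forces me to appeal instead to the alternative (differential) form of~\eqref{eq_derivsaigomaeda2}, namely
\[
\cD_{z-}^{\alpha,\alpha',\beta,\beta',\gamma} f(z) = (-1)^n \l(\frac{\rd}{\rd z}\r)^{n}\,\cI_{z-}^{-\alpha',-\alpha,-\beta',-\beta+n,-\gamma+n} f(z), \qquad n=\lfloor \Re(\gamma)\rfloor+1.
\]
Then Theorem~\ref{thm_intrsMK} applies to the inner integral with parameters shifted by $n$, and the Fox--Wright series obtained can be differentiated $n$ times term-by-term, since the termwise differentiation of $z^{\rho+\mu k}$ yields the Pochhammer prefactor $(\rho+\mu k)(\rho+\mu k-1)\cdots(\rho+\mu k-n+1)$, which by the shift relation $\Gamma(x+1) = x\Gamma(x)$ exactly converts the $n$-shifted gamma arguments back to those stated in~\eqref{eq_derivrsM} and~\eqref{eq_derivrsK}. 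Uniform convergence of the series on compacta, a consequence of $\Re(\alpha+\alpha'-\gamma)<\Re(\sigma)<1$ together with the Fox--Wright convergence criterion from Lemma~\ref{lem_foxwrightcvce}, justifies the term-by-term differentiation.

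The main obstacle is the bookkeeping: the $p+4$ upper and $q+4$ lower gamma arguments in $\ps{}{p+4}\psi_{q+4}$ must each be carefully tracked through the parameter substitutions and the $n$-fold differentiation. As in the sketched proof of Theorems~\ref{thm_intlsMK}--\ref{thm_intrsMK}, the expansion of Appell's $F_3$ kernel and the interchange of summations and integration, followed by evaluation via the beta integral and Gau{\ss}'s summation theorem, could also be performed \emph{ab initio}; this alternative avoids the shift argument but doubles the algebraic labor. Either way, the final step consists in repackaging the resulting infinite sum via Definition~\ref{def_foxwright} to obtain the desired $\ps{}{p+4}\psi_{q+4}$ form, exactly as done at the end of the proofs of Theorems~\ref{thm_lsM} and~\ref{thm_rsM}.
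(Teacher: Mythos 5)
Your proposal is correct and follows essentially the same route as the paper's own (sketch) proof: both rewrite $\cD_{z-}^{\alpha,\alpha',\beta,\beta',\gamma}$ as $(-1)^n\l(\frac{\rd}{\rd z}\r)^n$ applied to the $n$-shifted Saigo--Maeda integral $\cI_{z-}^{-\alpha',-\alpha,-\beta',-\beta+n,-\gamma+n}$, evaluate that inner integral by the $F_3$-expansion already carried out for Theorem~\ref{thm_intrsMK}, and then differentiate the resulting Fox--Wright series term by term via $\frac{\rd^n}{\rd z^n} z^m = \frac{\Gamma(m+1)}{\Gamma(m-n+1)}\, z^{m-n}$ to absorb the $n$-shifts. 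Your explicit observation that the naive substitution $\gamma\mapsto-\gamma$ into Theorem~\ref{thm_intrsMK} is blocked by the hypothesis $\Re(\gamma)>0$, which forces the differential form, is a point the paper glosses over but handles identically.
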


\begin{proof}[Short proofs]
Both proofs of Theorems~\ref{thm_derivlsMK} and~\ref{thm_derivrsMK} are quite similar to those of Theorems~\ref{thm_intlsMK} and~\ref{thm_intrsMK}. Under the above assumptions, let the function 
$\varphi(z)$ denote $\ps{}{p}M_q^{\xi,\eta}(z)$ and $\ps{}{p}K_q^{\xi,\eta;\nu}$, according to the case. From Saigo--Maeda's L-S and R-S Eqs.~\eqref{eq_derivsaigomaeda1} and \eqref{eq_derivsaigomaeda2} in Definition~\ref{def_saigomaeda}, fractional derivatives are written respectively, with $n = \lfloor \Re(\gamma)\rfloor + 1$,
\begin{subequations}
\begin{flalign*}
\cD_{0+}^{\alpha,\alpha',\beta,\beta',\gamma} \bgl( t^{\sigma-1} \varphi\l(ct^{\mu}\r) \bgr)(z) &:=
\l(\frac{\rd}{\rd z}\r)^n \cI_{0+}^{-\alpha',-\alpha,-\beta'+n,-\beta,-\gamma+n} \bgl( t^{\sigma-1} %
\varphi\l(ct^{\mu}\r) \bgr)(z)\ \quad \tand\\
\cD_{z-}^{\alpha,\alpha',\beta,\beta',\gamma} \bgl( t^{\sigma-1} \varphi\l(ct^{-\mu}\r) \bgr)(z) &:= (-1)^n \l(\frac{\rd}{\rd z}\r)^n \cI_{z-}^{-\alpha',-\alpha,-\beta',-\beta+n,-\gamma+n} %
\bgl( t^{\sigma-1} \varphi\l(ct^{-\mu}\r) \bgr)(z).
\end{flalign*}
\end{subequations}
By definition of $F_3$ and in view of Eq.~\eqref{eq_powfct1} in Lemma~\ref{lem_powfct}, expanding the series $F_3$ and interchanging the order of integration and summation is again guaranteed by the conditions of the theorems: absolute of integrals and uniform convergence of series (resp.). Next, from Eqs.~\eqref{eq_derivlsM}--\eqref{eq_derivlsK} and Eqs.~\eqref{eq_derivrsM}--\eqref{eq_derivrsK}, the identity $\frac{\rd^n}{\rd z^n} z^m = \frac{\Gamma(m+1)}{\Gamma(m-n+1)} z^{m-n}$, where $m\ge n$, involves a few simplifications, which yield the generalized Fox--Wright functions $\ps{}{p+4}\psi_{q+4}$ in Theorems~\ref{thm_intlsMK} and~\ref{thm_intrsMK}. This  completes the sketchproofs of the theorems. \end{proof}

\begin{rems} \label{rems4}
By setting  parameters ($p,\, q$, $\alpha,\, \alpha',\, \beta,\, \beta',\, \gamma,\ \eta,\, \mu,\, \xi,\, \sigma$, etc.) to various specific values, a number of special cases of the above theorems may be obtained by virtue of relations~\eqref{eq_redintsaigo} and~~\eqref{eq_redderivsaigo} in Remarks~\ref{rems2}.\par
\no For example, if $\alpha' = 0$, $\sigma = \eta$, $\mu = \xi$ and in view of Eq.~\eqref{eq_redintsaigo}, Theorems~\ref{thm_intlsMK} and~\ref{thm_intrsMK} coincide with Saigo's L-S and R-S fractional integrals of the $M$-series and the $K$-function, as carried out in~\S\S\ref{lsMK}--\ref{rsMK} (Section~\ref{MKfc}). Along the same lines, Riemann--Liouville and Erd{\'e}lyi--Kober fractional differintegral operators can be also obtained by putting $\beta = -\alpha$ and $\beta = 0$, respectively in Theorems~\ref{thm_derivlsMK}--\ref{thm_derivrsMK} and in Theorems~\ref{thm_intlsMK}--\ref{thm_intrsMK}.

Notice that Riemann--Liouville's, Saigo's and Saigo--Maeda's operators of fractional calculus operate as well for a very great number of generalized hypergeometric special functions, such as the generalized hypergeometric function $\ps{}{p}F_{q}$, the generalized Mittag-Leffler and Wright type functions, Fox 
$H$- and Meijer $G$-functions, Fox--Wright $\ps{}{p}\psi_{q}$ and Lommel--Wright,functions, Appell's one- and multi-variable functions, generalized incomplete gamma type functions $\ps{}{p}\gamma_{q}$ and 
$\ps{}{p}\Gamma_{q}$, incomplete Pochhammer symbols $(\lambda;x)_\nu$ and $[\lambda;x]_\nu$ ($\lambda, \nu\in \C$, $x\ge 0$), etc. (see a few recent papers such as, e.g., \cite{Kirya15,Kumar16,KumSax15,Parmar15,SrivastAgarwal13}).
\end{rems}

\section{Conclusion \& perspectives}
The results obtained so far are extensions of the works carried out by many authors, actually in increasing number and ability within the past years. On account of the general nature of the differintegral operators of Mittag-Leffler type functions, highly transcendental functions, etc. a number of known results can be easily found as special cases of the present results and conversely. Fractional calculus also offers wide perspectives in various domains of mathematics spreading out from pure and applied analysis, such as $q$-analogues for instance, to algebra of operators.


\newpage
\appendix
\section*{Appendices}
\addcontentsline{toc}{section}{Appendices}
\renewcommand{\thesubsection}{\Alph{subsection}}
\numberwithin{thm}{subsection}
\numberwithin{equation}{subsection}

\subsection{Asymptotic expansion of M-L type functions ($|z|\to \infty$)} \label{appA}

\subsubsection{Asymptotic expansion of $E_{\alpha,\beta}(z)$~\cite{GoKiMaRo14,HaMaSa11}} \label{as2ml}
The asymptotic expansion of the two-parametric M-L function is based on the integral representation of $E_{\alpha,\beta}(z)$ in the form~\eqref{complexmlintrep} given in \S\ref{intrepml} with the same Hankel path $\cH$. Following D{\v{z}}rba{\v{s}}jan~\cite[\color{cyan}1952, 1960, 1966]{Dzrba52,Dzrba60,Dzrba66}, Erd{\'e}lyi {\em et al.}~\cite[\color{cyan}Vol.~III, 1953]{ErMaObTr53}, the case when $\beta = 1$ and the general case with arbitrary $\beta\in \C$ are treated in the survey of Gorenflo {\em et al.}~\cite[\color{cyan}Sect.~4]{GoKiMaRo14}. The following two representations are considered.
\begin{subequations}
\begin{flalign} 
E_{\alpha,\beta}(z) &= \frac{1}{2\pi\ri \alpha} \lint_{\gamma(\epsilon;\delta)} \frac{ \re^{s^{1/\alpha}} s^{(1-\beta)/\alpha} }{s - z}\, \rd s\ \qquad \tfor\ z\in \Omega^{(-)}(\epsilon;\delta)\\ \label{rep1}
\shortintertext{%
and, for $z\in \Omega^{(+)}(\epsilon;\delta)$,
}
E_{\alpha,\beta}(z) &= \frac{1}{\alpha}\,\re^{s^{1/\alpha}} z^{(1-\beta)/\alpha} + %
\frac{1}{2\pi\ri \alpha} \lint_{\gamma(\epsilon;\delta)} \frac{ \re^{s^{1/\alpha}} %
s^{(1-\beta)/\alpha} }{s - z}\, \rd s, \label{rep2}
\end{flalign}
\end{subequations}
under the constraints
\begin{equation} \label{cond}
0 < \alpha < 2\ \qquad \tand\ \qquad \pi\alpha/2 < \delta < \min\bl(\pi,\pi\alpha\br).
\end{equation}

The above contour denoted by $\gamma(\epsilon;\delta) = \bl\{\epsilon > 0$,\, $0 < \delta\le  \pi\br\}$   is oriented by non-decreasing $\arg s$. It consists in the two rays $S_{-\delta} = \bl\{\arg s = -\delta,\, |s|\ge \epsilon\br\}$ and $S_{\delta} = \bl\{\arg s = \delta,\, |s|\ge \epsilon\br\}$, and the circular arc $C_\delta(0;\epsilon) = \bl\{|s| = \epsilon,\, -\delta\le \arg s\le \delta\br\}$.

\no If $0 < \delta < \pi$, then the Hankel contour $\gamma(\epsilon;\delta)$ divides the complex $s$-plane into two unbounded regions, namely $\Omega^{(-)}(\epsilon;\delta)$ to the left of $\gamma(\epsilon;\delta)$ by orientation and $\Omega^{(+)}(\epsilon;\delta)$ to the right of it. If $\delta = \pi$, then the contour consists of the circle $|s| = \epsilon$ and the twice passable ray $-\infty < s\le -\epsilon$.

Using the integral representations in~\eqref{rep1} and~\eqref{rep2} entails asymptotic expansions for the M-L function in the complex plane (see, e.g., \cite[\color{cyan}Thm.~4.3]{GoKiMaRo14}). Let 
$0 < \alpha < 2$, $\beta\in \C$\ be an arbitrary number, and $\delta\in \R$ be chosen to fullfill the condition \eqref{cond}. Thus, for any $m\in \N$ (and for $m = 0$ if the 'empty sum convention' is adopted), for all $z$ such that $|\arg z|\le \delta$,
\begin{subequations}
\begin{flalign}
E_{\alpha,\beta}(z) = \frac{1}{\alpha}\, z^{(1-\beta)/\alpha}\, \re^{z^{1/\alpha}} %
- \lsum_{n=1}^m \frac{z^{-n}}{\Gamma(\beta - \alpha n)}\; &+\; \cO\l(\frac{1}{z^{m+1}}\r)\ %
\qquad (|z|\to \infty)\\
\intertext{%
and analogously, for all $z$ such that $\delta < |\arg z| < \pi$,
}
E_{\alpha,\beta}(z) = - \lsum_{n=1}^m \frac{z^{-n}}{\Gamma(\beta - \alpha n)} %
\; &+\; \cO\l(\frac{1}{z^{m+1}}\r)\ \qquad (|z|\to \infty). \label{asympt2}
\end{flalign}
\end{subequations}
Similarly, in the case when $\alpha\ge 2$, the following asymptotic formula holds for $\beta\in \C$ and 
$m\in \N$ (see, e.g., \cite[\color{cyan}Thm.~4.4]{GoKiMaRo14}).
\begin{flalign} \label{asympt3}
E_{\alpha,\beta}(z) = \frac{1}{\alpha}\ \lsum_{|\arg z+2\pi n|<\frac{3\pi \alpha}{4}} & %
\l(z^{1/\alpha} \re^{2\pi \ri n/\alpha}\r)^{1-\beta}\, %
\exp\l(z^{1/\alpha} \re^{2\pi \ri n/\alpha}\r) \nonumber\\
& - \lsum_{n=1}^m \frac{z^{-n}}{\Gamma(\beta - \alpha n)}\; +\; \cO\l(\frac{1}{z^{m+1}}\r)\ %
\qquad (|z|\to \infty).
\end{flalign}
In the case of $\beta = 1$, $E_{\alpha}(z) := E_{\alpha,1}(z)$ reduces to related asymptotic expansion stated, e.g., in~\cite[\color{cyan}\S.~3.4]{GoKiMaRo14}.

\subsubsection{Asymptotic expansion of Prabhakar's M-L function~\cite{GoKiMaRo14,Paneva12}} \label{as3ml}
By Lemma~\ref{lem_mbrep}, the function $E_{\alpha,\beta}^{\gamma}(z)$ can be represented via the Mellin–Barnes integral given in Eq.~\eqref{eq_mbrep} (cf.~\cite[\color{cyan}\S~5.1.2]{GoKiMaRo14}), under the constraints $z\in \C$ and $|\arg z| < \pi$\ for $\alpha\in \R_+$, $\beta,\, \gamma\in \C$ and $\Re(\gamma) > 0$. Now, when $\beta$ is a sufficiently large real number, one can use Stirling’s formula, valid for any fixed $a$,
\[ 
\Gamma(z + a)\approx \sqrt{2\pi}\, z^{z+a-1/2}\, \re^{-z} \quad \text{as}\ \ |z|\to \infty,\]
in order to get the following asymptotic formula as $x\to \infty$ ($a > 0$, $\alpha > 0$, $\beta > 0$, 
$\gamma > 0$).
\begin{flalign} \label{asformprabml}
\Gamma(\alpha) E_{\alpha,\beta}^{\gamma}\bl(a(\alpha x)^\gamma\br) &\approx \lsum_{n=0}^\infty %
\frac{(\beta)_n\, a^n\, x^{\gamma n}}{n!}\, \frac{\sqrt{2\pi}\, \alpha^{\alpha-1/2}\, \re^{-\alpha}} %
{\sqrt{2\pi}\, \alpha^{\alpha-1/2+\gamma n}\, \re^{-\alpha}}\nonumber\\
&= \lsum_{n=0}^\infty \frac{(\beta)_n}{n!}\,\bl(a (x/\alpha)^\gamma\br)^n = %
\frac{1}{\bl(1 +  a(x/\alpha)^\gamma\br)^\beta}\,.
\end{flalign}

As in the case of the M-L function with two parameters, the asymptotic behaviour of the three parametric function $E_{\alpha,\beta}^\gamma(z)$ critically depends on the values of the parameters $\alpha$, 
$\beta$, $\gamma$ and cannot easily be described. In principle, an asymptotic expansion of Prabhakar's function can be found from its representation via a generalized Wright function or $H$-function (cf.~\cite[\color{cyan}\S~5.1.5]{GoKiMaRo14} or~\cite{KiSrTr06}). To the best of our knowledge, the asymptotic behaviour of $E_{\alpha,\beta}^\gamma(z)$ in different domains of the complex plane (similar, for example, to that of Prop.~3.6, and of Theorems~4.3 and 4.4 in~\cite{GoKiMaRo14}) has not yet been described in an explicit form.

Another step towards an asymptotic expansion was obtained however in~\cite{Paneva12} for non-negative integer values of $\beta = n$, when $n$ gets large. Prabhakar's M-L function is then naturally defined by
\begin{equation} \label{def_prabn}
E_{\alpha,n}^\gamma(z) := \lsum_{k=0}^\infty \frac{(\gamma)_k}{\Gamma(\alpha k + n)} \frac{z^k}{k!}\ %
\qquad (\alpha, \gamma\in \C,\, \Re(\alpha) >0,\ n\in \N).
\end{equation}
Next, given a number $\gamma$, suppose that some coefficients in Def.~\eqref{def_prabn} equal zero; that is, there exists an integer $p\in \N$, such that~\eqref{def_prabn} can be written as follows,
\begin{equation} \label{rep_prabn1}
E_{\alpha,n}^\gamma(z) = z^p \lsum_{k=p}^\infty \frac{(\gamma)_k}{\Gamma(\alpha k + n)} \frac{z^{k-p}}{k!}.
\end{equation}
Now, three main cases may be considered.
\bi
\item[{\em (i)}]\ If $\gamma\in \C\setminus \Z_{\le 0}$, then $p = 0$ for $n\in \N$ and $p = 1$ for $n = 0$. The functions $E_{\alpha,n}^\gamma(z)$ are entire of order $\rho = 1/\Re(\alpha)$ and type $\sigma = 1$.

\item[{\em (ii)}]\ Otherwise, the functions $E_{\alpha,n}^\gamma(z)$ actually reduce to polynomials of power $-\gamma$ and, denoting $m = -\gamma$, the representation~\eqref{rep_prabn1} can be rewriten in the form
\begin{flalign} \label{rep_prabn2}
E_{\alpha,n}^\gamma(z) &= z^p \lsum_{k=p}^m \frac{(\gamma)_k}{\Gamma(\alpha k + n)} \frac{z^{k-p}}{k!} %
\nonumber\\
\shortintertext{%
and, since $(-m)_k = \binom{m}{k}$ when $k\le m$,
}
E_{\alpha,n}^{-m}(z) &= z^p \lsum_{k=p}^m (-1)^k \binom{m}{k}\, \frac{z^{k-p}}{\Gamma(\alpha k + n)}.
\end{flalign}

\item[{\em (iii)}]\ If $\gamma = 0$, then $E_{\alpha,n}^0(z) = 1/\Gamma(n)$ for $n\in \N$ and $E_{\alpha,n}^0(z) = 0$ for $n = 0$. 
\ei

Summarizing, if $\gamma$ is a non-positive integer, as seen above, the three-parametric M-L functions reduce to polynomials, while when $\gamma\notin \Z_{\le 0}$, they are entire functions of order $\rho = 1/\Re(\alpha)$ and type $\sigma = 1$. Finally, the fonctions $E_{\alpha,n}^\gamma(z)$ admit asymptotic behaviours written in the following form as $n\to \infty$.
\begin{equation}
E_{\alpha,n}^\gamma(z) = \frac{(\gamma)_p}{\Gamma(\alpha p + n)}\, z^p %
\l(1 + \theta_{\alpha,n}^\gamma(z)\r)\ \qquad \text{with} \label{asymp_prabn0}
\end{equation}
\begin{subequations}
\begin{flalign}
\theta_{\alpha,n}^\gamma(z) &= \lsum_{k=p+1}^\infty \frac{\Gamma(\alpha p + n)}{\Gamma(\alpha k + n)}\, %
\frac{z^{k-p}}{k!}\ \qquad \tfor\ \gamma\in \C\setminus \Z_{\le 0} \label{asymp_prabn1}\\
\shortintertext{%
and respectively, for $\gamma = -m$, $m\in \N$,
}
\theta_{\alpha,n}^{-m}(z) &= \lsum_{k=p+1}^\infty \frac{(-m)_k}{(-m)_p}\, %
\frac{\Gamma(\alpha p + n)}{\Gamma(\alpha k + n)}\, \frac{z^{k-p}}{k!} = \lsum_{k=p+1}^\infty %
\frac{(-1)^{k-p} \binom{m}{k}}{\binom{m}{p}}\, \frac{\Gamma(\alpha p + n)}{\Gamma(\alpha k + n)}\, %
z^{k-p}. \label{asymp_prabn2}
\end{flalign}
\end{subequations}
In representations~\eqref{asymp_prabn1}--\eqref{asymp_prabn2}, $\gamma\ne 0$ and the parameter $p$ is determined as follows: $p = 0$ for all $n\in \Z_{>0}$ and $p = 1$ for $n = 0$.

\subsubsection{Asymptotics for the multiple M-L function $F_{\alpha,\beta}^{(\mu)}(z)$~\cite{Gerhold12}} \label{asFml}
The two-parametric M-L function $E_{\alpha,\beta}(z)$ can be generalized by attaching an exponent to its Taylor coefficients. For real values $\alpha$, $\beta$ and $\mu > 0$, the series
\begin{equation} \label{def_Fml}
F_{\alpha,\beta}^{(\mu)}(z) := \lsum_{n\ge 0} \frac{z^n}{\Gamma(\alpha n + \beta)^\mu}
\end{equation}
defines an entire function of $z\in \C$ of order $\rho = 1/(\alpha \mu)$. Special cases are the two-parametric M-L function and the Bessel function $I_0(2\sqrt{z}) = \lsum_{n=0}^\infty z^n/n!$.

If $\mu$ is a natural number, then the function in~\eqref{def_Fml} is an instance of the multiple M-L function investigated, e.g., by Kiryakova~\cite{Kirya10b}, in connection with numerous applications to fractional calculus; but it seems that the asymptotic behaviour of the multiple M-L function had not been
studied earlier than Gerhold's paper~\cite{Gerhold12}. The asymptotic behaviour of $F_{\alpha,\beta}^{(\mu)}(z)$ as $z\to \infty$ is considered in a sector of the complex plane, containing the positive real line. For the two-parametric M-L function ($\mu = 1$), this is usually analysed by an integral representation (see previous \S\ref{as2ml}), which appears to have no immediate extension to $\mu = 1$.
Whatsoever, the asymptotic result can be established by approximating the sum by an integral and then use
the Laplace method.

\begin{thm} \label{asym_mml}
Let $\mu$, $\alpha, \beta > 0$\ and $\epsilon > 0$ be arbitrary. Then, as $z\to \infty$ in the sector
\[
|\arg z|\le %
\begin{cases}
\alpha \mu \pi/2 - \epsilon, &\ \qquad 0 < \alpha \mu < 2\\ 
(2 - \alpha \mu/2)\pi &\ \qquad 2\le  \alpha \mu < 4\\
0 &\ \qquad \alpha \mu\ge 4,
\end{cases}
\]
we have the asymptotics
\begin{equation} \label{eqasym_mml}
F_{\alpha,\beta}^{(\mu)}(z)\sim \frac{1}{\alpha\sqrt{\mu}}\, (2\pi)^{(1-\mu)/2}\, %
z^{(\mu-2\beta\mu+1)/2\alpha\mu}\, \re^{\mu z^{1/\alpha\mu}}.
\end{equation}
\end{thm}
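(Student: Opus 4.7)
The plan is to reduce the claim to a classical Laplace-method (saddle-point) estimate on the defining series, following the template for asymptotics of power series whose coefficients are inverse powers of the gamma function. First I would restrict to $z>0$ real and apply Stirling's formula to the general term $c_n(z) := z^n/\Gamma(\alpha n+\beta)^\mu$, writing
\[
f(n;z) := \log c_n(z) = n\log z - \mu(\alpha n+\beta-\tfrac12)\log(\alpha n) + \mu\alpha n - \tfrac{\mu}{2}\log(2\pi) + O(1/n).
\]
Treating $n$ as a continuous variable and differentiating, the stationarity equation $\partial_n f(n;z)=0$ reduces to $\log z = \mu\alpha\log(\alpha n)+O(1/n)$, so the saddle sits at $n_0=n_0(z)\sim \alpha^{-1}z^{1/(\alpha\mu)}$; the second derivative there is $\partial_n^2 f(n_0;z) = -\mu\alpha/n_0\,(1+o(1))$, producing a Gaussian half-width $\sigma = \sqrt{n_0/(\mu\alpha)}$.

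Next I would carry out the Laplace approximation. A direct substitution, using the identity $\alpha n_0\log(\alpha n_0) = n_0\log z/\mu$, gives
\[
f(n_0;z) = \mu z^{1/(\alpha\mu)} - \tfrac{2\beta-1}{2\alpha}\log z - \tfrac{\mu}{2}\log(2\pi) + o(1),
\]
so that $c_{n_0}(z) \sim z^{-(2\beta-1)/(2\alpha)}(2\pi)^{-\mu/2}\re^{\mu z^{1/(\alpha\mu)}}$. I would then split the series at $|n-n_0|\le n_0^{1/2+\delta}$: outside this window the concavity of $f$ makes the tail exponentially smaller than the bulk, while inside a second-order Taylor expansion of $f$ combined with Euler--Maclaurin (or Poisson summation) identifies the partial sum with the Gaussian integral
\[
\sum_{|n-n_0|\le n_0^{1/2+\delta}} \re^{f(n;z)} \sim c_{n_0}(z)\lint_{-\infty}^{\infty} \re^{-(n-n_0)^2/(2\sigma^2)}\,\rd n = c_{n_0}(z)\sqrt{2\pi}\,\sigma.
\]
Plugging in $\sqrt{2\pi}\,\sigma = \sqrt{2\pi/(\mu\alpha^2)}\,z^{1/(2\alpha\mu)}$ and collecting the exponents of $z$ yields precisely the prefactor $\alpha^{-1}\mu^{-1/2}(2\pi)^{(1-\mu)/2}\,z^{(\mu-2\beta\mu+1)/(2\alpha\mu)}$ announced in~\eqref{eqasym_mml}.

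To extend to complex $z=r\re^{\ri\theta}$ in the stated sector, I would track the rotation of the saddle: writing $w = z^{1/(\alpha\mu)}$, the stationary point becomes $n_0\sim w/\alpha$, which rotates by $\theta/(\alpha\mu)$. Steepest descent demands that the real ray in $n$ still lie in a region where $\Re f(n;z)$ decreases quadratically away from $n_0$, and an elementary check shows this holds whenever $|\theta/(\alpha\mu)|<\pi/2$, giving the first regime $|\arg z|\le \alpha\mu\pi/2-\epsilon$. The remaining two regimes come from the $\alpha\mu$-valued character of $z^{1/(\alpha\mu)}$: the other $\alpha\mu$-th roots $w\,\re^{2\pi\ri k/(\alpha\mu)}$ each contribute an exponential $\re^{\mu w\re^{2\pi\ri k/(\alpha\mu)}}$, and the admissible sector is the maximal subset of $\arg z$ on which a single such exponential strictly dominates, producing the shrinking widths $(2-\alpha\mu/2)\pi$ for $2\le\alpha\mu<4$ and the degenerate ray $\arg z=0$ for $\alpha\mu\ge 4$.

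The main obstacle I anticipate is keeping the Laplace approximation uniform in $\arg z$ up to the sector boundary, in particular tracking the $O(1/n)$ Stirling remainder through the saddle location and the second-derivative estimate so that the $o(1)$ error in the exponent does not blow up near $\pm\alpha\mu\pi/2$. A clean way around this is to replace the series by its Hankel representation
\[
\frac{1}{\Gamma(\alpha n+\beta)^\mu} = \frac{1}{(2\pi\ri)^\mu}\,\lint_\cH\!\cdots\!\lint_\cH \lprod_{j=1}^{\mu} s_j^{-(\alpha n+\beta)}\,\re^{s_j}\,\rd s_1\cdots\rd s_\mu,
\]
swap sum and integrals through the geometric series in $z(s_1\cdots s_\mu)^{-\alpha}$, and perform a multidimensional steepest-descent analysis; this is essentially the route carried out in Gerhold~\cite{Gerhold12}, and it has the added virtue that the case distinction on $\alpha\mu$ appears naturally as a coalescence phenomenon among the $\mu$ saddles.
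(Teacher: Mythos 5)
Your Laplace computation on the positive real axis is correct and does land on the right constant: the saddle $n_0\sim\alpha^{-1}z^{1/(\alpha\mu)}$, the value $f(n_0;z)=\mu z^{1/(\alpha\mu)}-\tfrac{2\beta-1}{2\alpha}\log z-\tfrac{\mu}{2}\log(2\pi)+o(1)$ and the width $\sigma=\sqrt{n_0/(\mu\alpha)}$ combine to give exactly the prefactor in~\eqref{eqasym_mml}. The gap is in the passage to complex $z$, which is where essentially all the content of the sector conditions lives. Your primary route --- ``tracking the rotation of the saddle'' and requiring the real ray in $n$ to remain a steepest-descent path --- is not available, because $n$ is a discrete summation index: there is no contour to deform, and for non-real $z$ the summands oscillate so that the Gaussian bulk no longer controls the tails. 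This is precisely the obstruction the paper singles out (``applying the Laplace method directly does not work for non-real $z$; \dots\ one cannot balance the local expansion and the tails''). Your fallback, the $\mu$-fold product of Hankel integrals for $1/\Gamma(\alpha n+\beta)^\mu$, presupposes $\mu\in\N$, whereas the theorem allows arbitrary real $\mu>0$; and contrary to your attribution it is not the route described here for Gerhold~\cite{Gerhold12} --- indeed the paper remarks that the Hankel-integral device used for $\mu=1$ ``appears to have no immediate extension'' to general $\mu$.

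What the paper actually does is first convert the sum into a genuine one-dimensional integral: in the sector $|\arg z|\le\max\bl(0,(2-\alpha\mu/2)\pi\br)$ one has
\[
F_{\alpha,\beta}^{(\mu)}(z)=\lint_0^\infty \frac{z^t}{\Gamma(\alpha t+\beta)^\mu}\,\rd t+\cO(z),
\]
where $\Gamma(\alpha t+\beta)^{-\mu}$ is meaningful for every real $\mu>0$. Only then is the saddle-point method applied: the path is replaced by a line $\cL$ from $0$ through $t_0\approx\alpha^{-1}z^{1/(\alpha\mu)}$, the dominant contribution is extracted from the window $|t-t_0|\le|t_0|^\tau$ with $\tau\in\,]1/2,2/3[$, and the two side integrals are shown to be negligible. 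The case distinction on $\alpha\mu$ then arises from the validity of the sum-to-integral step together with the requirement that the saddle contribution dominate the $\cO(z)$ error and the tails --- not from a competition among the $\alpha\mu$-th roots of $z$, a heuristic which in any case does not account for the $\epsilon$-loss in the regime $0<\alpha\mu<2$. To repair your argument you would need to supply this sum-to-integral lemma (or some equivalent analytic continuation of the summation variable); that is the one genuinely non-routine step, and it is the one your proposal omits.
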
 
Applying the Laplace method directly does not work for non-real $z$; the absolute values of the
summands in~\eqref{def_Fml} are peaked near $n\approx \alpha^{-1} |z|^{1/\alpha\mu}$, but it seems that one cannot balance the local expansion and the tails. This is caused by oscillations in the summands, which can be dealt with by shifting the problem to the asymptotic evaluation of an integral. The Laplace method then succeeds, after moving the integration contour through a saddle point located approximately
at $\alpha^{-1} z^{1/\alpha\mu}$. For that purpose, an integral representation of~\eqref{def_Fml} is needed as $z\to \infty$ in the sector $|\arg z|\le \max\bl(0, (2-\alpha \mu/2)\pi\bl)$, which leads to
\[
F_{\alpha,\beta}^{(\mu)}(z) = \lint_0^\infty \frac{z^t}{\Gamma(\alpha t + \beta)^\mu}\, \rd t + \cO(z).\]

The saddle point method may then be applied to the above integral, first by locating the saddle point, next by changing the integration contour to a line $\cL$ which begins at $0$ and passes through $t_0$ (valid for large $|z|$) and finally by picking up the dominant contribution of the integral from the range $|t - t_0|\le |t_0|^\tau$ ($\tau\in ]1/2, 2/3[$) around the saddle point. Substituting 
$t = t_0(y+1)$ with $y\ge -1$ in the line $\cL$ divides the r.h.s. in three integrals in the form
\begin{flalign}
\lint_\cL \frac{z^t}{\Gamma(\alpha t + \beta)^\mu}\, \rd t &= t_0 \lint_{-1}^\infty %
\frac{z^{t_0(y+1)}}{\Gamma\bl(\alpha t_0(y+1) + \beta\br)^\mu}\, \rd y\nonumber\\
&= t_0 \l( \lint_{-1}^{-|t_0|^{\tau-1}} + \lint_{-|t_0|^{\tau-1}}^{|t_0|^{\tau-1}} + %
\lint_{|t_0|^{\tau-1}}^\infty \r)\, \frac{z^{t_0(y+1)}}{\Gamma\bl(\alpha t_0(y+1) +\beta\br)^\mu}\,\rd y.
\end{flalign}
For large $|z|$, the central integral provides the r.h.s. of Eq.~\eqref{eqasym_mml} in Theorem~\ref{asym_mml} and the two side integrals are negligible. A full asymptotic expansion can be obtained easily by pushing the local expansion around the saddle point further. 

A possible question for future research remains the finding of an analytic continuation for the multiple M-L function. For $\mu\in \C$ and fixed $z$, one might ask whether the function defined by~\eqref{def_Fml} has an analytic continuation for $\Re(\mu)\le 0$.

\begin{rem}
Whenever $\mu > 0$ and $\alpha = \beta = 1$, the asymptotics as $n\to \infty$ along the real line of 
\[
F_{1,1}^{(\mu)}(z) := \lsum_{n=0}^\infty \frac{z^n}{n!^\mu}\]
is a special case of the above asymptotic behaviour of~\ref{def_Fml}. Namely, in the sector defined in Theorem~\ref{asym_mml} (with $\alpha = \beta = 1$), we get the asymptotics 
\[
F_{1,1}^{(\mu)}(z)\sim \frac{1}{\sqrt{\mu}}\, (2\pi)^{(1-\mu)/2}\, %
z^{(1-\mu)/2\mu}\, \re^{\mu z^{1/\mu}}\ \qquad (z\to \infty).\]

Notice that the (possibly formal) series $F_{1,1}^{(\mu)}(z)$ is not {\em D-finite} (or {\em non-holonomic}) for any $\mu\in \R\setminus \Q$.

Besides, this function satisfies the integral relation
\[
\lint_0^\infty \re^{-t/z} F_{1,1}^{(\mu+1)}(t)\, \rd t = z F_{1,1}^{(\mu)}(z)\ \qquad (z\ne 0).\]
The proof is immediate by simple expansion and substitution $s = t/z$ in the above integral.
\begin{align*}
\lint_0^\infty \re^{-t/z} F_{1,1}^{(\mu+1)}(t)\, \rd t &= %
\lint_0^\infty \re^{-t/z} \lsum_{n=0}^\infty \frac{t^n}{n!^{(\mu+1)}}\, \rd t %
= \lsum_{n=0}^\infty \frac{1}{n!^{(\mu+1)}}\, \lint_0^\infty t^n \re^{-t/z}\, \rd t\\
&= \lsum_{n=0}^\infty \frac{z^{n+1}}{n!^{(\mu+1)}}\, \lint_0^\infty s^n \re^{-s}\, \rd s %
= \lsum_{n=0}^\infty \frac{z^{n+1}}{n!^{(\mu)}}.
\end{align*}
\end{rem}

\subsection{Determination of $E_{-\alpha,\beta}(z)$ with negative value of the first parameter} \label{appB}
To find a defining equation of the function $E_{-\alpha,\beta}(z)$ rewrite the integral representation~\eqref{2mlintrep} of the M-L function with parameters $\alpha,\, \beta\in \R$, $\alpha < 0$ as 
\[
E_{\alpha,\beta}(z) = \frac{1}{2\pi} \lint_{\cH} \frac{\re^\zeta}{\zeta^{\beta} - z\zeta^{-\alpha+\beta}}\, \rd \zeta\ \qquad (z\in \C).\]
After expanding part of the above integrand and then substituting it into Eq.~\eqref{2mlintrep} we obtain the following definition of $E_{-\alpha,\beta}(z)$ with negative value of the first parameter,
\begin{subequations}
\begin{flalign} 
E_{-\alpha,\beta}(z) &= \frac{1}{\Gamma(\beta)} - E_{\alpha,\beta}(\tfrac{1}{z})\ %
\qquad (\alpha > 0,\ z\in \C\setminus \{0\}). \label{def_negparml1}\\
\shortintertext{%
In particular, for $\beta = 1$,
}
E_{-\alpha}(z) &:= E_{-\alpha,1}(z) = 1 - E_{\alpha}(\tfrac{1}{z})\ %
\qquad (\alpha > 0\ z\in \C\setminus \{0\}). \nonumber
\end{flalign}
\end{subequations}
Now, from the known recurrence formula
\begin{subequations}
\begin{flalign} 
E_{\alpha,\beta}(z) &= \frac{1}{\Gamma(\beta)} + z E_{\alpha,\alpha+\beta}(z), \nonumber\\
\intertext{%
another variant of the definition in Eq.~\eqref{def_negparml1} is derived,
}
E_{-\alpha,\beta}(z) &= - \frac{1}{z}\, E_{\alpha,\alpha+\beta}(\tfrac{1}{z})\ \ %
\qquad (\alpha > 0,\, \beta\in \R,\ z\in \C\setminus \{0\}). \label{def_negparml2} 
\end{flalign}
\end{subequations}
The definitions~\eqref{def_negparml1}--\eqref{def_negparml2} actually determine the same function, analytic in $\C\setminus \{0\}$.

From the definition of $E_{\alpha,\beta}(z)$ in~\eqref{def_mitt2}, also results the following series representation of the extended M-L function (i.e. the function corresponding to real negative values of the first parameter),
\[
E_{-\alpha,\beta}(z) = - \lsum_{n=1}^\infty \frac{z^{-1}}{\Gamma(\alpha n + \beta)}\, %
\qquad (z\in \C\setminus \{0\}).\]
By using this series representation and the definitions (\eqref{def_negparml1} or~\eqref{def_negparml2}), the extended two-parametric M-L function with real negative first parameter provides several classical functional, differential and recurrence relations, which are analogous to corresponding relations for $E_{\alpha,\beta}(z)$ with positive valued first parameter.

\subsection{Complex contour for the reciprocal gamma and the beta functions} \label{appC}
Recall the classical definitions of Euler's gamma and beta functions (1729 and 1772, resp.) and {\em  Euler's reflection formula for the gamma function} (1771); all three are required in this appendix. The latter reflection formula~\eqref{reflexform} connects the gamma function with the sine function.\footnote{%
To prove the reflection formula~\eqref{reflexform}, first set $t = s/(s-1)$ in Definition~\eqref{def_beta}, which gives rise to the {\em second beta integral} with integration along the real half line:
\begin{equation} \label{secbeta}
B(x,y) = \lint_0^\infty \frac{s^{x-1}}{(1 + s)^{x+y}}\rd s = \frac{\Gamma(x) \Gamma(y)}{\Gamma(x+y)}\,.
\end{equation}
Next, set $y = 1 - x$, $0 < x < 1$ in Eq.~\eqref{secbeta} to obtain $\Gamma(x) \Gamma(1-x) %
= \lint_0^\infty \frac{t^{x-1}}{(1+t)}\rd t$.

This integral is computed by considering the contour integral $I_x := \lint_\cC \frac{z^{x-1}}{(1 - z)}\rd z$, where $\cC$ consists of two circles about the origin of radii $r$ and $\varepsilon$, respectively, which are joined along the negative real axis from $-r$ to $-\varepsilon$. Move along the outer circle in the counterclockwise direction, and along the inner circle in the clockwise direction. By the residue theorem, $I_x = -2\pi\ri$ when $z^{x-1}$ has its principal value. Thus,
\[
-2\pi\ri = \lint_{-\pi}^{\pi} \frac{\ri r^x \re^{\ri x\theta}}{1 - r\re^{\ri \theta}}\rd \theta %
+ \lint_{r}^{\varepsilon} \frac{t^{x-1} \re^{\ri x\pi}}{1 + t}\rd t + %
\lint_{\pi}^{-\pi} \frac{\ri \varepsilon^x \re^{\ri x\theta}}{1- \varepsilon\re^{\ri \theta}}\rd \theta %
+ \lint_{\varepsilon}^{r} \frac{t^{x-1} \re^{-\ri x\pi}}{1 + t}\rd t.\]
Let $r\to \infty$ and $\varepsilon\to 0$, so that the first and third integrals tend to zero and the second and fourth combine to provide Euler's reflection formula~\eqref{reflexform} for $0 < x < 1$. The full result follows next by analytic continuation. This, in a sense, shows that $1/\Gamma(z)$ is `half of the sine function’. (See, e.g., Temme~\cite[\color{cyan}Chap.~3, \S3.2.5]{Temme96}.)
}
\begin{subequations}
\begin{flalign} 
\Gamma(z) &:= \lint_0^\infty t^{z-1} \re^{-t}\rd t\ \qquad (\Re(z) > 0) \label{def_gamma}\\
B(x,y) &:= \lint_0^1 t^{x-1} (1 - t)^{y-1}\rd t\ \qquad (\Re(x) > 0,\, \Re(y) > 0) \label{def_beta}\\
\Gamma(z) \Gamma(1-z) &= \frac{\pi}{\sin(\pi z)}\ \qquad (z\notin \Z). \label{reflexform}
\end{flalign} 
\end{subequations}

Hankel's contour integral provides one of the most beautiful and useful integral representations of the reciprocal gamma function (see, e.g., Temme~\cite[\color{cyan}Chap.~3, \S3.2.6]{Temme96}). It has the form
\begin{equation} \label{eq_recipgamma}
\frac{1}{\Gamma(z)} = \frac{1}{2\pi \ri} \lint_{\cH} \re^s\, s^{-z}\rd s\ \qquad (z\in \C).
\end{equation}
The contour of integration $\cH$ is the Hankel contour that runs from $-\infty$, $\arg s = -\pi$, encircles the origin in the positive direction (that is counterclockwise) ends at $-\infty$, now with 
$\arg s = +\pi$\ (this is the reason why the notation $\lint_{-\infty}^{(0+)}$ is sometimes used instead of notation $\lint_\cH$). The multi-valued function $s^{-z}$\ is assumed to be real for real values of 
$z$\ and $s$, $s > 0$.

A proof of representation~\eqref{eq_recipgamma} follows immediately from the theory of Laplace transforms: from the well-known integral
\[
\frac{\Gamma(z)}{s^z} = \lint_0^\infty t^{z-1} \re^{-st} \rd t,\]
Eq.~\eqref{eq_recipgamma} is obtained as a special case of the inversion formula. A direct proof follows from a special choice of $\cH$, that is the negative real axis. This is only possible when $\Re(z) < 1$. Under this condition, the contribution from a small circle around the origin, with radius tending to zero, can be neglected. Thus, the r.h.s. of~\eqref{eq_recipgamma} yields,
\[
\frac{1}{2\pi \ri} \bgl(-\lint_\infty^0 \l(s\re^{-\ri \pi}\r)^{-z}\, \re^{-s}\rd s - \lint_0^\infty %
\l(s\re^{\ri \pi}\r)^{-z}\, \re^{-s}\rd s\bgr) = \frac{\sin \pi z}{\pi} \Gamma(1-z).\]
With the help of Euler's reflection formula~\eqref{reflexform}, it is easily shown that the above relation equals indeed the l.h.s. of~\eqref{eq_recipgamma}. In a final step, we can deduce from the principle of analytic continuation that Eq.~\eqref{eq_recipgamma} holds true for all finite complex values of $z$. Namely, both the l.h.s. and the r.h.s. of Eq.~\eqref{eq_recipgamma} are entire functions of $z$. As pointed out in \S\ref{intrepml}, the integral representation of the one- and two-parametric M-L functions is given by Eq.~\eqref{complexmlintrep} by means of the Hankel contour $\cH$ for the 
$1/\Gamma(z)$ integral in~\eqref{eq_recipgamma} (see Note~\color{cyan}5\color{black}).

Another form of~\eqref{eq_recipgamma} gives rise to the following integral representation of $\Gamma(z)$
\begin{equation} \label{repgamma}
\Gamma(z) = \frac{1}{2\ri \sin(\pi z)} \lint_{\cH} s^{z-1}\, \re^s\rd s.
\end{equation}
The substitution $s = -t$ yields an integrand as in the starting point~\eqref{def_gamma} (that is, the definition of $\Gamma$). The main idea which stands behind~\eqref{def_gamma} is that the many-valued function $t^{z-1}$ arising in definition~\eqref{def_gamma} can be used to open up the original contour along $[0,\infty)$, and obtain a representation that is valid in a larger domain of the parameter $z$. This approach can be useful with other special functions, e.g. the complex contour for the beta integral, which works as follows~\cite[\color{cyan}Chap.~3, \S3.2]{Temme96}.

\subsubsection*{A complex contour for the beta function}
Consider the integral
\[
I_{x,y} = \frac{1}{2\pi \ri} \lint_0^{(1+)} \omega^{x-1} (1 - \omega)^{y-1}\rd \omega,\]
where $\Re(x) > 0$ and $y\in \C$. The contour starts and ends at the origin, and encircles the point
1 in the positive (counterclockwise) direction. The argument (or {\em phase}) of $\omega - 1$ is zero at positive points larger than 1. When $\Re(y) > 0$ we can deform the contour along $(0, 1)$. Then we get $I_{x,y} = B(x,y) \sin(\pi y)/y$ and it follows that
\begin{equation}
B(x,y) = \frac{y}{\sin(\pi y)} \frac{1}{2\ri \pi} %
\lint_0^{(1+)} \omega^{x-1} (1 - \omega)^{y-1}\rd \omega.
\end{equation}
The integral is defined for any complex value of $y$. For $y = 1, 2,\ldots$, the integral vanishes;
this absorbs the infinite values of the term in front of the integral.

\subsubsection*{Double contour integral}
It is possible to replace the integral for $\Gamma(z)$ along a half line by a contour integral which converges for all values of $z$. A similar process can be carried out for the beta integral.

Let $P$ be any point between 0 and 1. The following Pochhammer’s extension of the beta integral holds:
\[
\lint_P^{\qquad (1+,0+,1-,0-)} t^{x-1} (1 - t)^{y-1}\rd t = %
\frac{4\pi \re^{\pi \ri (x+y)}}{\Gamma(1-x) \Gamma(1-y) \Gamma(x+y)}\,.\]
Here the contour starts at $P$, encircles the point 1 in the positive (counterclockwise) direction,
returns to $P$, then encircles the origin in the positive direction, and returns to $P$. 
The $1-$, $0-$ indicates that now the path of integration is in the clockwise direction, first around 1 and then 0. The formula is proved by the same method as Hankel’s formula. Notice that it is true for any complex $x$ and $y$: both sides are entire functions of $x$ and $y$.

\subsection{Integral representation of the Gau{\ss} hypergeometric function} \label{appD}
The Mellin--Barnes integral is a contour integral representation involving a product of gamma functions. The \emph{confluent} hypergeometric function $\ps{}{1}F_{1}^{}(z)$, for example, can be represented by such a Mellin--Barnes type contour integral, in which case the integral is valid in the sector 
$|\arg(-z)| < \pi/2$. More generally, all confluent hypergeometric functions, such as $\ps{}{1}F_1^{}(z)$, Kummer's hypergeometric function $M(a;c;z) = \llim_{b\to \infty} \ps{}{2}F_{1}^{}(a,b;c;z/b)$, etc., have integral representations in the form of a Mellin--Barnes type contour integral, each one giving rise to an analytic continuation in a complex domain included in (all or part of) the complex plane (see, e.g., \cite[\color{cyan}\S12.5(iii)]{AskDaal10}),\cite[\color{cyan}App.~D and F]{GoKiMaRo14},\cite[\color{cyan}\S1.6]{KiSrTr06}, etc.). 

Now, the following theorem deals with the hypergeometric function, in  which case the line integral is usually taken along a contour $\cC$ which is a deformation of the imaginary axis such that the integration contour separates all the poles of $\Gamma(a + s) \Gamma(b + s)$ from those of 
$\Gamma(-s)$, and $(-z)^s$ has its principal value.

\begin{thm}
Provided that $a$ is a positive integer, the Mellin--Barnes contour integral representation of Gau{\ss}'s hypergeometric function $\ps{}{2}F_1^{}(a,b;c;z)$ is given by
\begin{equation} \label{eq_mellinbarnes}
\frac{\Gamma(a) \Gamma(b)}{\Gamma(c)}\, \ps{}{2}F_{1}^{}(a,b ; c ; z) =  \frac{1}{2\pi \ri}\, %
\lint_{c - \ri \infty}^{c + \ri \infty} \frac{\Gamma(a + s) \Gamma(b + s)} {\Gamma(c + s)}\, %
\Gamma(-s)\ (-z)^s\rd s.
\end{equation}
Here, $|\arg(-z)| < \pi$ and the path of integration can pass to infinity parallel to the imaginary axis with any finite value of $c = \Re(s)$, on condition that the contour $\cC$ can be indented, if necessary, to separate all the poles of $\Gamma(-s)$ at the points $s = \nu$ ($\nu\in \N$) to the left from all the poles of $\Gamma(a + s)$ and $\Gamma(b + s)$ to the right, at the points $s = - a - \nu$ and
$s = - b - \nu$ ($\nu\in \N$), respectively. (Such a contour may always be drawn if a and b are non-negative integers or zero. Moreover, Eq.~\eqref{eq_mellinbarnes} remains valid also for negative or zero values of $a$.)
\end{thm}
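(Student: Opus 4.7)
The plan is to establish~\eqref{eq_mellinbarnes} by closing the Mellin--Barnes contour to the right and applying the residue theorem. I first work under the additional restriction $|z| < 1$ and then extend the identity by analytic continuation to the sector $|\arg(-z)| < \pi$.

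First I would verify that the integrand
\[
F(s) := \frac{\Gamma(a+s)\,\Gamma(b+s)}{\Gamma(c+s)}\, \Gamma(-s)\, (-z)^s
\]
is meromorphic in $\C$ with the stated pole structure: $\Gamma(-s)$ has simple poles at $s = \nu\in \N$ lying strictly to the right of $\cC$, while $\Gamma(a+s)$ and $\Gamma(b+s)$ contribute simple poles at $s = -a-\nu$ and $s = -b-\nu$ ($\nu\in \N$) lying strictly to the left. I would then close $\cC$ by a sequence of large semicircles $\cC_R$ in the right half-plane, selecting half-integer radii $R = N + 1/2$ ($N\in \N$) to avoid the poles of $\Gamma(-s)$. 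The closed contour is traversed clockwise, so the residue theorem gives
\[
\frac{1}{2\pi \ri}\, \lint_{c-\ri R}^{c+\ri R} F(s)\, \rd s + \frac{1}{2\pi \ri}\, \lint_{\cC_R} F(s)\, \rd s = -\lsum_{\nu=0}^{N}\, \mathop{\mathrm{Res}}_{s=\nu} F(s).
\]
Using $\mathop{\mathrm{Res}}_{s=\nu} \Gamma(-s) = (-1)^{\nu+1}/\nu!$ together with $\Gamma(a+\nu) = (a)_\nu\,\Gamma(a)$, $\Gamma(b+\nu) = (b)_\nu\,\Gamma(b)$ and $\Gamma(c+\nu) = (c)_\nu\,\Gamma(c)$, a one-line computation yields
\[
\mathop{\mathrm{Res}}_{s=\nu} F(s) = -\,\frac{\Gamma(a)\,\Gamma(b)}{\Gamma(c)}\, \frac{(a)_\nu\,(b)_\nu}{(c)_\nu}\, \frac{z^\nu}{\nu!},
\]
and summing over $\nu\ge 0$ reproduces $-\,\frac{\Gamma(a)\Gamma(b)}{\Gamma(c)}\, \ps{}{2}F_1^{}(a,b;c;z)$, which is precisely what~\eqref{eq_mellinbarnes} asserts (up to the vanishing semicircle contribution).

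The main obstacle will be to show that $\lint_{\cC_R} F(s)\, \rd s\to 0$ as $R\to \infty$. The plan is to combine Stirling's asymptotics, which gives $\bl|\Gamma(a+s)\Gamma(b+s)/\Gamma(c+s)\br| = \cO\bl(|s|^{\Re(a+b-c)}\br)$ on $\cC_R$, with Euler's reflection formula~\eqref{reflexform} rewritten as $\Gamma(-s) = \pi/\bl(\sin(\pi s)\,\Gamma(1+s)\br)$. The half-integer radius choice keeps $1/|\sin(\pi s)|$ uniformly bounded on $\cC_R$, while $1/|\Gamma(1+s)|$ contributes super-exponential factorial decay by Stirling. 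The remaining factor $|(-z)^s| = |z|^{\Re(s)}\,\re^{-\Im(s)\,\arg(-z)}$ stays bounded on $\cC_R$ because $|z| < 1$ (ensuring decay in the dominant direction $\Re(s) > 0$) and $|\arg(-z)| < \pi$ (preventing blowup from the $\Im(s)$ factor). These estimates together imply super-polynomial decay of $|F(s)|$ on $\cC_R$, whence the semicircle integral vanishes in the limit.

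Finally, the identity~\eqref{eq_mellinbarnes} extends from the disk $|z| < 1$ to the full sector $|\arg(-z)| < \pi$ by analytic continuation: the left-hand side is holomorphic there by the standard analytic continuation of $\ps{}{2}F_1^{}$, while the right-hand side converges absolutely along $\Re(s) = c$ by the same exponential decay estimates. The supplementary assertion concerning non-positive integer values of $a$ or $b$ is then immediate, since the hypergeometric series terminates and only finitely many residues contribute to the sum, which makes the drawing of a separating contour automatic.
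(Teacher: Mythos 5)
Your overall strategy coincides with the paper's: the paper's own proof is a three-line sketch ("move the contour to the right while picking up the residues at $s=0,1,2,\ldots$, assuming first $|z|<1$, then analytically continue"), and you have simply filled in the residue computation and the contour-closing argument. The residue calculation is correct: $\mathrm{Res}_{s=\nu}\Gamma(-s)=(-1)^{\nu+1}/\nu!$ together with $\Gamma(a+\nu)=(a)_\nu\Gamma(a)$ does reproduce the terms of the Gau{\ss} series with the right sign once the clockwise orientation is accounted for, and the final passage to $|\arg(-z)|<\pi$ by analytic continuation is exactly what the paper does.

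However, the justification you give for the vanishing of the arc integral contains a genuine error in the bookkeeping of the exponential factors. First, $|(-z)^s|=|z|^{\Re(s)}\re^{-\Im(s)\arg(-z)}$ is \emph{not} bounded on $\cC_R$ merely because $|\arg(-z)|<\pi$: near the top and bottom of the arc it can grow like $\re^{R\,|\arg(-z)|}$. Second, $1/|\Gamma(1+s)|$ does \emph{not} decay near the imaginary-axis portion of the arc; by Stirling, $1/|\Gamma(1+\ri y)|$ grows like $|y|^{-1/2}\re^{\pi|y|/2}$. The estimate that actually closes the argument is the following: the half-integer radii give not mere boundedness but the exponential bound $1/|\sin(\pi s)|=\cO\bl(\re^{-\pi|\Im(s)|}\br)$ on $\cC_R$, while Stirling applied to the full four-gamma ratio $\Gamma(a+s)\Gamma(b+s)/\bl(\Gamma(c+s)\Gamma(1+s)\br)$ yields only a polynomial factor $\cO\bl(|s|^{\Re(a+b-c)-1}\br)$ (the exponential parts cancel between numerator and denominator). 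The product then behaves like $|s|^{\Re(a+b-c)-1}\,|z|^{\Re(s)}\,\re^{|\Im(s)|(|\arg(-z)|-\pi)}$; the factor $|z|^{\Re(s)}$ with $|z|<1$ kills the part of the arc where $\Re(s)\ge \epsilon R$, and the factor $\re^{|\Im(s)|(|\arg(-z)|-\pi)}$ with $|\arg(-z)|<\pi$ kills the part where $|\Im(s)|\ge R/2$. As written, your decomposition of the estimate would fail whenever $\arg(-z)\neq 0$; the fix is routine but necessary for the sketch to be sound.
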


\begin{proof}
This result can be obtained by moving the contour to the right while picking up the residues at $s =$ 0, 1, 2,\ldots. In establishing the result, it was necessary to suppose that $|z| < 1$. From the principle of analytic continuation, the integral~\eqref{eq_mellinbarnes} converges and defines an analytic function of $z$, which is holomorphic in the sector $|\arg(-z)| < \pi$. Hence, 
$\ps{}{2}F_{1}^{}(a,b ; c ; z)$ is defined not only inside the unit circle but also in the complex $s$-plane cut along the real $z$-axis from 0 to $\infty$, provided that the parameters are such that the integration path can be drawn to separate the poles of $\Gamma(-s)$ from the poles of $\Gamma(a + s)$ and $\Gamma(b + s)$.
\end{proof}

If the path of integration is displaced to the left we can proceed to evaluate the integral in~\eqref{eq_mellinbarnes} in a similar manner. In this case, such poles are encountered which correspond to two sequences of simple poles: the poles of $\Gamma(a + s)$ with residues
\begin{equation} \label{eq_res}
(-1)^n \frac{\Gamma(b - a - n) \Gamma(a + n)}{n! \Gamma(c - a - n)}\, (-z)^{-a-n} %
= \frac{\Gamma(a) \Gamma(b - a)}{\Gamma(c - a)}\, (-z)^{-a}\ \frac{(1 + a - c)_n (a)_n} %
{(1 + a - b)_n n!}\, (-z)^{-n}
\end{equation}
at the points $s = - a - \nu$, and the poles of $\Gamma(b + s)$ at $s = - b - \nu$, respectively; the related residues of which are obtained by interchanging $a$ and $b$ in Eq.~\eqref{eq_res}. (This is straightforward by symmetry in the series, $F(a,b;c;z) = F(b,a;c;z)$.) It is first assumed also that $a - b$ is not an integer, so that these poles are indeed simple poles~\cite[\color{cyan}Vol.~1, \S2.1.4]{ErMaObTr53}.

The use of analytic continuation enables to remove the restriction $|z| > 1$ and establish that the following Eq.~\eqref{eq_asympt} holds for $|\arg(-z)| < \pi$ and $a - b\notin \Z$.
\begin{flalign} \label{eq_asympt}
\ps{}{2}F_{1}^{}(a,b ; c ; z) =&{} \frac{\Gamma(c) \Gamma(b - a)}{\Gamma(b) \Gamma(c - a)}\, %
(-z)^{-a}\, \ps{}{2}F_{1}^{}\l(\sbs{a, 1 + a - c\\[.1cm] 1 + a - b} ; \tfrac{1}{z}\r)\nonumber\\
& \qquad +\, \frac{\Gamma(c) \Gamma(a - b)}{\Gamma(a) \Gamma(c - b)}\, (-z)^{-b}\, %
\ps{}{2}F_{1}^{}\l(\sbs{b, 1 + b - c\\[.1cm] 1 + b - a} ; \tfrac{1}{z}\r).
\end{flalign}
Now, when $|z| > 1$, the integral round the translated path vanishes as the path moves to infinity.
In that case, the asymptotic expansion of Gau{\ss}'s hypergeometric function for large $|z|$ results easily from formula~\eqref{eq_asympt} under the above conditions. Whence the asymptotic behaviour of 
$\ps{}{2}F_{1}^{}(a,b ; c ; z)$ as $|z|\to \infty$ in the sector $|\arg(-z)| < \pi$ given by
\begin{flalign} \label{asympthyp}
\ps{}{2}F_{1}^{}(a,b ; c ; z) &= A (-z)^{-a} \bgl(1 + \cO\l(z^{-1}\r) \bgr) %
+ B (-z)^{-b} \bgl(1 + \cO\l(z^{-1}\r) \bgr)\ \qquad \tif\ \  a - b\notin \Z\nonumber\\
\shortintertext{%
and by
}
\ps{}{2}F_{1}^{}\l(a,b;c;z\r) &= C (-z)^{-a}\log(-z) \bgl(1 + \cO\l(z^{-1}\r) \bgr)\ %
\qquad \tif\ \ a - b\in \Z,
\end{flalign}
where $A, B, C\in\C$ are constants. More precisely, $A = \frac{\Gamma(c) \Gamma(b - a)} %
{\Gamma(b) \Gamma(c - a)}$ and $B = \frac{\Gamma(c) \Gamma(a - b)}{\Gamma(a) \Gamma(c - b)}$ by Eq.~\eqref{eq_asympt}, with commuting parameters $a$ and $b$~\cite[\color{cyan}\S\S2.1.4 and 2.3.1]{ErMaObTr53}.

When $a - b$ is an integer, some of the poles become double poles and then, the residues involve terms in $\ln(-z)$ as may be checked in Eq.~\eqref{asympthyp}. For instance, if $b - a$ is a non-negative integer,
\begin{multline*}
\ps{}{2}F_{1}^{}\l(\sbs{a, a + m\\[.1cm] c} ; z\r) = (-z)^{-a}\, \frac{\Gamma(c)}{\Gamma(a+m)} %
\lsum_{n=0}^{m-1} \frac{(a)_{n}(m-n-1)!}{k!\Gamma(c-a-n)}\, z^{-n}\\
+ (-z)^{-a}\, \frac{\Gamma(c)}{\Gamma(a)} \lsum_{n=0}^\infty \frac{(a+m)_{n}}{n!(n+m)! %
\Gamma(c-a-n-m)}\, (-1)^{n} z^{-n-m}\\
\times\, \bgl(\ln(-z) + \psi(n+1) + \psi(n+m+1)  - \psi(a+n+m) - \psi(c-a-n-m)\bgr),
\end{multline*}
where $\psi(z) = \Gamma'(z)/\Gamma(z)$ is the usual psi (or digamma) function, $|z| > 1$ and $|\arg(-z)| < \pi$. (see Askey {\em et al.}~\cite[\color{cyan}Chap.~15, \S15.8(ii)]{AskDaal10} and Erd\'elyi {\em et al.}~\cite[\color{cyan}Vol.~1, \S\S2.1.4 and 2.3.1]{ErMaObTr53} for exhaustive proofs).

Given proper conditions of convergence, one can relate more general Mellin--Barnes type contour integrals to the generalized hypergeometric function $\ps{}{p}F_{q}^{}(z)$ in a similar way. Some special function related to $\ps{}{p}F_{q}^{}(z)$ include the dilogarithm Li$_2(z) = \Sum_{n\ge 0} n^{-2} z^n = z\ \ps{}{3}F_{2}^{}(1,1,1 ; 2,2 ; z)$, Hahn and Wilson polynomials, etc.

\addcontentsline{toc}{section}{References}
\section*{References}

\vskip -1cm
\def\refname{\empty}

\bibliographystyle{article}
\def\bibfmta#1#2#3#4{ {\sc #1}, {#2}, \emph{#3}, #4.}
\bibliographystyle{book}
\def\bibfmtb#1#2#3#4{ {\sc #1}, \emph{#2}, {#3}, #4.}

\vskip 1cm
\no\rule{\textwidth}{.5pt}
\vskip .3cm
\no {\small Christian {\sc Lavault}
\newline \emph{E-mail:} \href{mailto:lavault@lipn.univ-paris13.fr}{lavault@lipn.univ-paris13.fr}
\newline LIPN, UMR CNRS 7030 -- Laboratoire d'Informatique de Paris-Nord
\newline Universit\'e Paris 13, Sorbonne Paris Cit\'e, F-93430 Villetaneuse.
\newline \emph{URL:} \url{http://lipn.univ-paris13.fr/\textasciitilde lavault}
}

\end{document}